\theoremstyle{plain}
\newtheorem{thm}{\bf Theorem}[section]
\newtheorem{prop}[thm]{\bf Proposition}
\newtheorem{lemma}[thm]{\bf Lemma}
\newtheorem{corollary}[thm]{\bf Corollary}
\newtheorem{conjecture}[thm]{\bf Conjecture}
\theoremstyle{definition}
\newtheorem{definition}[thm]{\bf Definition}
\theoremstyle{remark}
\newtheorem{remark}[thm]{\bf Remark}
\newtheorem{example}[thm]{\bf Example}
\theoremstyle{example}
\numberwithin{equation}{section}
\def \Hf{{\operatorname{HF}}}
\def \init{{\operatorname{in}_{\prec}}}
\def \Sym{{\operatorname{Sym}}}
\def \Ker{{\operatorname{Ker}}}
\def \Sym{{\operatorname{Sym}}}
\def \GL{{\operatorname{GL}}}
\def \reg{{\operatorname{reg}}}
\def \diag{{\operatorname{diag}}}
\def \xx{{\mathfrak{X}}}
\def \ff{{\bf f}}
\def \gg{{\bf g}}
\def \hh{{\bf h}}
\def \aa{{\alpha}}
\def \kk{\Bbbk}
\def \NN{\mathbb N}
\def \ZZ{\mathbb Z}
\def \R{\mathcal R}
\def \C{\mathcal C}
\def \Wl{L_{\lambda}W}
\def \Wc{L_{\gamma}V}
\def \Vl{L_{\lambda}V}
\def \UU{\operatorname{U}}
\def \Hom{\operatorname{Hom}}
\def\TTT{T_{\lambda,\pi,\gamma}}
\def \tl{{}^{\textup{t}\negthinspace}}
\def\YY{{\Bbb Y}}
\def\Bw{\bigwedge}
\def\Bl{\biggl}
\def\Br{\biggr}
\def\tensor{\otimes}
\def\Tensor{\bigotimes}
\def\Dirsum{\bigoplus}
\def\dirsum{\oplus}
\def\mult{\operatorname{mult}}
\def\st{{\,t}}
\def\su{{\,u}}
\def\sv{{\,v}}
\begin{document}
\title{Relations between the minors of a generic matrix}
\author{Winfried Bruns}
\address{Universit\"at Osnabr\"uck, Institut f\"ur Mathematik, 49069 Osnabr\"uck, Germany}
\email{wbruns@uos.de}
\author{Aldo Conca}
\address{Dipartimento di Matematica,
Universit\`a degli Studi di Genova, Italy} \email{conca@dima.unige.it}
\author{Matteo Varbaro}
\address{Dipartimento di Matematica,
Universit\`a degli Studi di Genova, Italy}
\email{varbaro@dima.unige.it}
\subjclass[2000]{13A50, 14M12, 14L30}
\keywords{Relations of minors; Determinantal varieties; Plethysms}
\dedicatory{To David Eisenbud on his 65th birthday}
\maketitle

\begin{abstract}
It is well-known that the Plücker relations generate the ideal of
relations of the maximal minors of a generic $m\times n$ matrix. In
this paper we discuss the relations of $t$-minors for $t<\min(m,n)$.
We will exhibit minimal relations in degrees $2$ (non-Plücker in
general) and $3$, and give some evidence for our conjecture that we
have found the generating system of the ideal of relations. The
approach is through the representation theory of the general linear group.
\end{abstract}

\section*{Introduction}
In algebra, in algebraic geometry and in representation theory the
polynomial relations between the minors of a matrix  are interesting
objects for many reasons. Surprisingly  they are still unknown in
almost all cases. While it is a classical theorem that the Pl\"ucker
relations (of maximal minors of a generic matrix) generate the
defining ideal of the Grassmannian, only a few other cases have been
treated, for example, the principal minors of a (symmetric) matrix,
see Holtz and Sturmfels \cite{HSt}, Lin and Sturmfels \cite{LSt} and
Oeding \cite{Oed}. For arbitrary $t$, the relations between the
$t$-minors of a generic matrix are certainly not understood, and in
this paper we try to investigate them.

We refer the reader to Fulton and Harris \cite{FH}, Procesi
\cite{Pr},
%\discuss{Procesi inserted}
and Weyman \cite{We} for
background in representation theory, to Bruns and Vetter
\cite{BV} for the theory of determinantal rings, and to
\cite{B}, \cite{BC1}, \cite{BC2} and \cite{BC5} for structural
results of algebras generated by minors.

Let us consider the matrix
\begin{displaymath}
X=\begin{pmatrix}
x_{11} & x_{12} & x_{13} & x_{14} \\
x_{21} & x_{22} & x_{23} & x_{24}
\end{pmatrix}
\end{displaymath}
where the $x_{ij}$'s are indeterminates over a field $\kk$.
With $[ij]=x_{1i}x_{2j}-x_{1j}x_{2i}$,
one has
\[
[12][34]-[13][24]+[14][23]=0.
\]
This is the Pl\"ucker relation, and it is the only minimal
relation in the sense that it generates the ideal of relations.
In fact, the case $t=\min\{m,n\}$ is well understood in
general, even if anything but trivial: If $t=\min \{m,n\}$ the
Plücker relations generate the ideal of relations between the
$t$-minors of $X$. In particular, there are only quadratic
minimal relations. Similarly, other classical algebras
generated by minors, like the coordinate ring of the flag
variety, are defined by quadrics, for instance see \cite[Chap.
14]{MS}.

This changes already for $2$-minors of a $3\times 4$-matrix. To
identify a minor we have now to specify rows and columns indices.
Denote by  $[ij|pq]$ the minor of $X$ with row indices $i,j$ and
column indices $p,q$. Of course,  the Pl\"ucker relations are still
present, but they are no more sufficient.  Cubics appear among the
minimal relations, for example
\begin{equation}\label{typical cubic relation}
\det\begin{pmatrix}
[12|12]&[12|13]&[12|14]\\
[13|12]&[13|13]&[13|14]\\
[23|12]&[23|13]&[23|14]
\end{pmatrix}=0;
\end{equation}
see \cite{B}.

One reason why the case of maximal minors is easier than
the general case emerges from a representation-theoretic point
of view. Let $\kk$ be a field of characteristic $0$, $A_t$
denote the subalgebra of the polynomial ring
$\kk[X]=\kk[x_{ij}]$ generated by the $t$-minors of $X$. When
$t=m\le n$
%\discuss{condition on $t,m,n$ corrected}
the ring $A_t$ is the coordinate ring of the Grassmannian $G(m,n)$ of
all $m$-dimensional subspaces of a vector space $W$ of
dimension $n$. In the general case, $A_t$
is the coordinate ring of the Zariski closure of the image of
the following morphism of affine spaces:
\[
\Lambda_t \ : \ \Hom_{\kk}(W,V) \to \Hom_{\kk}(\Bw^t W,\Bw^t V),
 \ \ \Lambda_t(\phi)=\wedge^t \phi,
 \]
where $V$ is a vector space of dimension $m$. Notice that
the group $G=\GL(V)\times \GL(W)$ acts on each graded
component $(A_t)_d$ of $A_t$. If $t=\min\{m,n\}$, then each
$(A_t)_d$ is actually an \emph{irreducible} $G$-representation.
This is far from being true in the general case, and this
complicates the situation tremendously.

\vspace{2mm}

In this paper we will exhibit quadratic and cubic minimal relations between
$t$-minors, that naturally appear in a $m\times n$-matrix for $t\ge2$.
The action of $G$ on $A_t$ induces a
$G$-action also on the ideal of relations $J_t$.  Therefore it suffices to
describe the highest weight vectors of the $G$-irreducible
subrepresentations of $J_t$.

Each relation $f$  between minors  gives rise   to a  mirror
relation denoted  by $f'$, namely the one  obtained  by switching columns and rows.

The quadratic relations will be completely described in
Subsection \ref{subdeg2} in terms of the irreducible
$G$-representations associated to them and their highest weight
vectors: we call the latter $\ff_{u,v}$ where $u$ and $v$ vary
in $\{0,\dots,t\}$ and are such that $u+v$ is even and $u\neq
v$, see \eqref{expldeg2}. These correspond to Pl\"ucker
relations if and only if $u=0$ or $v=0$. So, if $t\geq 3$,
Pl\"ucker relations are not the only quadratic relations.
By construction one has $\ff_{u,v}'=\ff_{v,u}$.

As \eqref{typical cubic relation} shows, minimal cubic relations
exist already for $t=2$.
We will see that, every time $t$ increases by $1$, a new type of minimal cubic relation comes
up. We give the corresponding irreducible $G$-representations
and highest weight vectors in Subsections \ref{sec_cubic_sh}
and \ref{sec_cubic_weight}.  For a given $t$ the cubic relations we describe are of two kinds (up to mirror),  even and odd. We denote their highest weight vector by  $\gg_u$, see \eqref{expldeg3}, with $1\leq u\leq \lfloor t/2 \rfloor$ for the even relations  and  by $\hh_u$, see  \eqref{expldeg3odd}, with $2\leq u\leq  \lceil t/2 \rceil$ for the odd.
 In Subsection \ref{DetREl} we will describe how one can find
the especially appealing determinantal relations, not necessarily minimal, like
\eqref{typical cubic relation}.

We can prove that there are no further minimal cubic relations only for $t=2$ and $t=3$
(Subsections \ref{sec_min_cub} and \ref{nogeg4t=2}).
Nevertheless we conjecture that the highest weight relations
 we have identified generate the ideal of relations for all
$t,m,n$ (Conjecture \ref{conjrel}).

In Section \ref{Upper} we have collected the evidence
supporting our conjecture. To a large extent it is based on
computer calculations involving various tools like
\textsf{Singular} \cite{singular} and \textsf{Lie} \cite{lie}
and algorithms developed by the authors. Using the toric
deformation of \cite{BC1}, we first determine the
Castelnuovo-Mumford regularity of $A_t$ in Theorem
\ref{regularity} for all $t,m,n$. In conjunction with a priori
information on the Hilbert function of $A_t$, it provides
degree bounds for Gröbner basis calculations by which we have
verified the conjecture in case $t=2$ for $m,n\le 5$ and $m=4$,
$n$ arbitrary, as documented in Subsection \ref{sec4xn}. (A
duality argument, see Proposition \ref{dual}, then implies it
for $t=3$, $m=n=5$.) The result for $4\times n$ matrices is based on (the easy)
Theorem \ref{independencefromn}
 by which minimal relations of $t$-minors of a $m\times n$ matrix have already to
``live'' in an $m\times(m+t)$ matrix.

By computations based on Young symmetrizers we can exclude that
there exist degree $4$ minimal relations for $t=2$, and this
may be the strongest argument for the conjecture. (With more
effort, these computations could be pushed until  degree $6$.)
In the last two subsections \ref{T-shape_rel} and
\ref{nomoredeg3sh} we show that we have found all relations
that exist for ``very strong'' combinatorial reasons. At least,
they make it very unlikely that our relations are incomplete in
degree $3$.

To indicate our main method of proof we have to specify some
technical details. In representation theoretic terms, $A_t$ is
the subalgebra of $\kk[X]$ generated by the unique copy of the
irreducible $G$-representation $\Bw^t V\tensor \Bw^t W^*$ in
$\kk[X]$. By the universal property of the symmetric algebra
one has a presentation
\[
A_t=\Sym(E\tensor F^*)/J_t,\qquad E=\Bw^t V,\ F=\Bw^t W.
\]
 The problem we discuss is to describe a (minimal) system of
generators of $J_t$ as a ($G$-)ideal in $S_t=\Sym(E\tensor
F^*)$. It is one of the two main obstructions to the solution
of the problem that the decomposition of $S_t$ into
$G$-irreducibles is not known. (In fact, to know it is
equivalent to knowing the $\GL(V)$-decomposition of
$L_\mu(\Bw^tV)$ for all partitions $\mu$, a completely open
plethysm problem.) Fortunately, by the work of De Concini,
Eisenbud and Procesi \cite{DEP1}, from the decomposition of
$A_t$ one can link the decompositions of $S_t$ and $J_t$
easily.

In order to describe  minimal relations  we  develop combinatorial techniques to identify irreducible representations in $J_t$ and to decide whether  they are in the span of lower degree representations.

At this point it is inevitable to work simultaneously with the larger
group $H=\GL(E)\times \GL(F)$, despite the fact that $J_t$ is
not an $H$-ideal. After the introduction of some notation and
of our objects in Subsections \ref{sec_not} and \ref{sec_at},
we develop the representation theoretic structure of $S_t$ in
Subsections \ref{sec_tensalg} and \ref{Estructure}.

The intermediate subsection \ref{sec_succ} is devoted to a
formula that will allow us to derive relations with prescribed
$G$-type from lower degree relations. Lemma \ref{lemmaformula},
which may be of interest beyond our application, helps us in
specific cases to overcome the second main obstruction, namely
the lack of understanding the relationship between the algebra
structure of $S_t$ and its $G$-structure. In contrast, the
$H$-structure is well understood by \cite{DEP1}, and we can
combine it with Pieri's formula in order to (dis)prove that
certain representations in $J_t$ are minimal.

It turns out that  all  the minimal relations we have found exist for ``shape reasons''
encoded in the $G$-decompositions of the modules $L_\lambda E\tensor
L_\lambda F^*$ and Pieri's formula.  Indeed, it is our feeling, mainly based on
computational experience, that these are, roughly speaking,  the only reasons
for a irreducible $G$-representation to give a minimal relation.
The feeling  just  expressed is made more precise in Conjecture  \ref{StrongAlg}.

In view of the representation theoretic approach we will assume
throughout that the base field $\kk$ has characteristic $0$.

\section{The representation theoretic structure}\label{sec_rep}

Representation theory will guide us in our search for relations
between the $t$-minors, in proving existence and proving
non-existence. Before starting, we need to introduce some
notation.

\subsection{Notation}\label{sec_not}

Let $\kk$ be a field of characteristic $0$, $V$ a $\kk$-vector
space of dimension $n$ and $E$ a finite dimensional rational
$\GL(V)$-representation (or $\GL(V)$-module). Then $E$ can be
decomposed in irreducible $\GL(V)$-modules, which are
parametrized by partitions $\lambda =
(\lambda_1,\dots,\lambda_k)$ with $\lambda_1\geq \dots \geq
\lambda_k\geq 1$ and $\lambda_1\leq n$. More precisely, $E$ can
be written as a direct sum of \emph{Schur modules} $\Vl$ and of
their duals. Since there is a $\GL(V)$-equivariant isomorphism
$(\Vl)^*\cong L_{\lambda}(V^*)$, there is no danger in writing
$\Vl^*$ for $(\Vl)^*$, and from now on we will do it. We follow
the notation of Weyman \cite{We}, so $L_{(1,1,\dots,1)}V\cong
\Sym^d V$ and $L_{(d)}V\cong \Bw^dV$. (Fulton and Harris
\cite{FH}
%\discuss{Fu removed}
use the dual convention). We will
write $\lambda\vdash d$ if $\lambda_1+\dots +\lambda_k=d$. It
might be that we will write a partition grouping the equal
terms together: For example we may write $(7^3,2,1^2)$ for
$(7,7,7,2,1,1)$. We can view a partition $\lambda$ as a
\emph{(Young) diagram} (sometimes  we will refer to it also as
a \emph{shape}), that we will still denote by $\lambda$,
namely:
\[
\lambda = \{(i,j)\in \NN \setminus \{0\}\times \NN \setminus \{0\} \ :
\ i\leq k \mbox{ and }j\leq \lambda_i\}.
\]
It is convenient to think of a diagram as  a sequence of rows
of boxes, for instance the diagram associated to the partition
$\lambda=(6,5,5,3,1)$ is
\[
{\setlength{\unitlength}{1mm}
\begin{picture}(30,25)(-5,0)

\put(-11,12){$\lambda \ =$}

\put(0,25){\line(1,0){30}}
\put(0,20){\line(1,0){30}}
\put(0,15){\line(1,0){25}}
\put(0,10){\line(1,0){25}}
\put(0,5){\line(1,0){15}}
\put(0,0){\line(1,0){5}}

\put(0,25){\line(0,-1){25}}
\put(5,25){\line(0,-1){25}}
\put(10,25){\line(0,-1){20}}
\put(15,25){\line(0,-1){20}}
\put(20,25){\line(0,-1){15}}
\put(25,25){\line(0,-1){15}}
\put(30,25){\line(0,-1){5}}

\end{picture}}
\]
Given a diagram $\lambda$, a {\it (Young) tableau} $\Lambda$ of
shape $\lambda$ on $\{1,\dots,r\}$ is a filling of the boxes of
$\lambda$ by letters in the alphabet $\{1,\dots,r\}$. For
instance, the following is a tableau of shape $(6,5,5,3,1)$ on
$\{1,\dots,7\}$:
\[
{\setlength{\unitlength}{1mm}
\begin{picture}(30,25)(-5,0)

\put(-11,12){$\Lambda \ =$}

\put(0,25){\line(1,0){30}}
\put(0,20){\line(1,0){30}}
\put(0,15){\line(1,0){25}}
\put(0,10){\line(1,0){25}}
\put(0,5){\line(1,0){15}}
\put(0,0){\line(1,0){5}}

\put(0,25){\line(0,-1){25}}
\put(5,25){\line(0,-1){25}}
\put(10,25){\line(0,-1){20}}
\put(15,25){\line(0,-1){20}}
\put(20,25){\line(0,-1){15}}
\put(25,25){\line(0,-1){15}}
\put(30,25){\line(0,-1){5}}

\put(1.8,21.3){\small{$3$}}
\put(6.8,21.3){\small{$5$}}
\put(11.8,21.3){\small{$4$}}
\put(16.8,21.3){\small{$3$}}
\put(21.8,21.3){\small{$2$}}
\put(26.8,21.3){\small{$7$}}
\put(1.8,16.3){\small{$2$}}
\put(6.8,16.3){\small{$1$}}
\put(11.8,16.3){\small{$7$}}
\put(16.8,16.3){\small{$6$}}
\put(21.8,16.3){\small{$4$}}
\put(1.8,11.3){\small{$2$}}
\put(6.8,11.3){\small{$2$}}
\put(11.8,11.3){\small{$3$}}
\put(16.8,11.3){\small{$1$}}
\put(21.8,11.3){\small{$2$}}
\put(1.8,6.3){\small{$5$}}
\put(6.8,6.3){\small{$6$}}
\put(11.8,6.3){\small{$7$}}
\put(1.8,1.3){\small{$1$}}

\end{picture}}
\]
Formally, a tableau $\Lambda$ of shape $\lambda$ on
$\{1,\dots,r\}$ is a map $\Lambda:\lambda \rightarrow
\{1,\dots,r\}$. The {\it content} of $\Lambda$ is the vector
$c(\Lambda)=(c(\Lambda)_1,\dots,c(\Lambda)_r)\in \NN^r$ such
that $c(\Lambda)_p=|\{(i,j) \ : \ \Lambda(i,j)=p\}|$. A tableau
is \emph{standard} if the numbers in each row form a strictly
increasing sequence and the numbers in each column form a
weakly increasing sequence. It turns out that, once a basis of
$V$ has been fixed, let us say $e_1,\dots,e_n$, the set of
standard tableaux of shape $\lambda$ on $\{1,\dots,n\}$ is in
one-to-one correspondence with a basis of $L_\lambda V$.
Moreover, we can identify $\GL(V)$ with the group of invertible
$n\times n$-matrices with entries in $\kk$: A matrix $A\in
\GL(V)$ acts on $V$ by multiplication on the left of the column
vectors.

Let us recall the  following explicit construction of a Schur
module. Let $\lambda\vdash d$ be a diagram and $\Lambda$ be a tableau of
shape $\lambda$ such that $c(\Lambda)=(1,1,\dots,1)\in \NN^d$.
Let $\Sigma_d$ be the symmetric group on $d$ elements, and let
us define the following subsets of it:
\begin{gather*}
\C_{\Lambda}=\{\sigma \in \Sigma_d \ : \ \sigma
\mbox{ preserves each column of }\Lambda\},\\
\R_{\Lambda}=\{\tau \in \Sigma_d \ : \ \tau \mbox{ preserves
each row of }\Lambda\}.
\end{gather*}
The symmetric group $\Sigma_d$ acts on $\bigotimes^dV$
by
\[
\sigma(v_1\otimes \dots \otimes v_d)=v_{\sigma^{-1}(1)}\otimes \dots
\otimes v_{\sigma^{-1}(d)}, \ \ \ \sigma \in \Sigma_d, \ v_i\in V.
\]
and extending  $\kk$-linearly.  With these notation, the {\it Young symmetrizer} (with respect
to $\Lambda$) is the following map:
\begin{align*}
\YY_\Lambda : \bigotimes^d V & \rightarrow  \displaystyle \bigotimes^d V\\
v_1\otimes \dots \otimes v_d & \mapsto  \sum_{\sigma \in \C_{\Lambda}}
\sum_{\tau \in \R_{\Lambda}}(-1)^\tau
\sigma\tau (v_1\otimes\dots\otimes v_d).
\end{align*}
It turns out that there is a $\GL(V)$-isomorphism
$\YY_\Lambda(\bigotimes^d V)\cong L_\lambda V$. For a tableau
$\Gamma$ of shape $\lambda$ on $\{1,\dots,n\}$ we set
\[
\YY_\Lambda(\Gamma)=\YY_\Lambda(e_{\Gamma(1,1)}\otimes \cdots \otimes e_{\Gamma(1,\lambda_1)}\otimes
\dots \otimes e_{\Gamma(k,1)}\otimes \cdots \otimes e_{\Gamma(k,\lambda_k)}).
\]
Notice that $\YY_\Lambda$ is alternating in the rows of
$\lambda$: if $\Gamma'$ arises from $\Gamma$ by the exchange of
two entries in the same row, then
\[
\YY_\Lambda(\Gamma)=-\YY_\Lambda(\Gamma').
\]
In literature, the Young symmetrizers are often defined by
letting first act  the column-preserving permutations and
then the row-preserving ones. Such a definition does not yield
an alternating map. However, the two definitions lead to the
same theory, as explained in the book of Procesi \cite[Section 9.2]{Pr}.

We recall that an irreducible rational $\GL(V)$-representation $F\subseteq E$
can be identified by its highest weight. We fix a basis of $V$ so
that we can speak of diagonal or triangular matrices in $\GL(V)$. A
\emph{weight vector} of $E$ of \emph{weight}
$\alpha=(\alpha_1,\dots,\allowbreak \alpha_n)\in \ZZ^n$ is a vector
$v\in E$ such that $\diag({\bf a})v=a_1^{\alpha_1}\cdots
a_n^{\alpha_n}v$ , where $\diag({\bf a})$ is an arbitrary diagonal
matrix in $\GL(V)$ with diagonal $a_1, \dots, a_n\in \kk$. The
\emph{highest weight} of $F$ is the lexicographically largest
weight of a weight vector of $F$, and the corresponding weight vector $v$, unique up to
scalar, is called a \emph{highest weight vector}. The highest weight
is independent of the basis chosen in $V$ and represents the
irreducible representation up to isomorphism. If $E$
is polynomial, then $F\cong \Vl$ if and only if $\tl\lambda$ is the
weight of $v$. (We remind the reader that $\tl \lambda$ is the
transpose partition of $\lambda$, given by
$\tl\lambda_i=|\{j:\lambda_j\geq i\}|$.)

Let $\UU_-(V)\subseteq \GL(V)$ be the subgroup of lower triangular
matrices with $1$'s on the diagonal. Then a $\UU_-(V)$-invariant
vector $v$ of a rational representation $E$ is
the highest weight vector of an irreducible $\GL(V)$-module
$F\subseteq E$.

Given the $\GL(V)$-module $E$, we define
$$
E_\lambda
$$
to be the sum of all its irreducible $\GL(V)$-submodules that
are isomorphic to $L_\lambda V$. Then $E_\lambda\cong
(L_\lambda V)^m$ for some integer $m\ge 0$. We denote the
\emph{multiplicity $m$ of $\lambda$ in $E$} by
$$
\mult_\lambda(E).
$$
If $\mult_\lambda(E)\le 1$ for all $\lambda$, then $E$ is called
\emph{multiplicity free}. If $\mult_\lambda(E)>0$, we will say that
$\lambda$ \emph{occurs in} $E$.

We will mainly be concerned with representations of the group
$G=\GL(V)\times \GL(W)$ for vector spaces $V$ and $W$. Up to
isomorphism its irreducible polynomial representations are the
modules $L_\gamma V\tensor L_\lambda W$. Actually, we will deal
especially with the rational irreducible $G$-modules $L_\gamma
V\tensor L_\lambda W^*$. The notation just introduced will be
applied analogously to pairs $(\gamma|\lambda)$. So we will speak of
\emph{bi-diagrams} $(\gamma |\lambda)$, \emph{bi-tableaux} etc. We
have also to speak about \emph{bi-weights} and \emph{bi-weight
vectors}. The \emph{highest bi-weight vector} of $\Wc\tensor \Wl^*$
is the (unique up to scalar) $U$-invariant element of $\Wc\tensor
\Wl^*$, where $U=\UU_-(V)\times \UU_+(W)$: equivalently, it is the
element of bi-weight
$((\tl\,\gamma_1,\dots,\tl\,\gamma_h)|(-\tl\lambda_k,\dots,-\tl\lambda_1))$.

\subsection{The algebras $A_t$ and their defining
ideals}\label{sec_at}

First of all, let us introduce our objects. Let $\kk$ be a
field of characteristic $0$, $m$ and $n$ two positive integers
such that $m\leq n$ and
\[
X = \begin{pmatrix} x_{11} & x_{12} & \cdots & \cdots &  x_{1n} \\
x_{21} & x_{22} & \cdots & \cdots & x_{2n} \\
\vdots & \vdots & \ddots & \ddots & \vdots \\
x_{m1} & x_{m2} & \cdots & \cdots & x_{mn}
\end{pmatrix}
\]
a $m\times n$ matrix of indeterminates over $\kk$. Moreover let
\[R(m,n)=\kk[x_{ij} \ : \ i=1,\dots,m, \ j=1,\dots,n ]\]
be the polynomial ring in $mn$ variables over $\kk$. We are
interested in the $\kk$-subalgebra $A_t(m,n)\subseteq R(m,n)$
generated by the $t$-minors of the matrix $X$. We will use the
standard notation for a $t$-minor, namely, given two sequences
$1\leq i_1, \dots ,i_t\leq m$ and$1\leq j_1, \dots ,j_t\leq n$,
we write
\[
[i_1,\dots,i_t|j_1,\dots,j_t]
\]
for the determinant of the $t\times t$-submatrix of $X$  with  row indices  $i_1,\dots,i_t$ and the column indices  $j_1,\dots,j_t$. So we have
\[
A_t(m,n)=\kk[[i_1,\dots,i_t|j_1,\dots,j_t]:1\leq i_1<\dots <i_t\leq m,
1\leq j_1<\dots <j_t\leq n] \ \subseteq \ R(m,n).
\]
When there is no danger of confusion, we will simply write $R$
and $A_t$ instead of, respectively, $R(m,n)$ and $A_t(m,n)$.
Now let $V$ and $W$ be $\kk$-vector spaces of dimension,
respectively, $m$ and $n$. Let us fix a basis
$\{e_1,\dots,e_m\}$ of $V$ and one of $W$, say
$\{f_1,\dots,f_n\}$. We have a natural action of
$G=\GL(V)\times \GL(W)$ on $R$, namely the one induced by
\[
(A,B)\cdot X=AXB^{-1} \ \ \forall \ A\in \GL(V), B\in \GL(W).
\]
For $1\leq t\leq m$ the $\kk$-algebra $A_t$ is a $G$-invariant
subspace of $R$. Moreover this action respects the
$\NN$-grading of $R$, so, actually, any degree component $R_d$
is a finite rational $G$-representation. Moreover, the decomposition of
$R$ into irreducible $G$-modules is available, known as the
\emph{Cauchy formula}: It is easy to show that the natural
isomorphism $\Sym(V\tensor W^*)\cong R$ is $G$-equivariant, and
the Cauchy formula gives the decomposition
\begin{equation}\label{cauchy}
R_d\cong \Sym^d(V\tensor W^*)\cong
\bigoplus_{\lambda\vdash d}L_{\lambda}V\tensor L_{\lambda}W^*.
\end{equation}
where the direct sum is extended over all the partitions
$\lambda$ of $d$ such that $\lambda_1\leq m$. The decomposition
of the subrepresentation $A_t\subseteq R$ in irreducible
$G$-modules can be deduced from the work of De Concini, Eisenbud and Procesi \cite{DEP1}. Before describing it, we want
to point out that we will consider the graded structure on $A_t$ such that  the  $t$-minors have degree $1$, so that $(A_t)_d\subseteq
R_{td}$.

\begin{definition}
A partition $\lambda=(\lambda_1,\dots,\lambda_k) \vdash e$ is
called \emph{$(t,d)$-admissible} if $e=td$ and $k\leq d$
\end{definition}

We have the decomposition

\begin{equation}\label{decat}
(A_t)_d \cong \bigoplus_{\lambda \vdash td}L_{\lambda}V\tensor L_{\lambda}W^*
\end{equation}
where the direct sum runs over the  $(t,d)$-admissible partitions.  See \cite[3.3]{BC5} for this
compact description of $A_t$.

To a pair of standard tableaux of shape $\lambda$ on
$\{1,\dots,m\}$ and $\{1,\dots,n\}$, respectively, we can
associate a product of minors $\Delta\in R$ of shape $\lambda$,
namely $\Delta=\delta_1 \cdots \delta_k$ where $\delta_i$ is a
$\lambda_i$-minor. For example:
\[
{\setlength{\unitlength}{0.8mm}
\begin{picture}(30,15)(-5,5)

\put(-47,13.0){$\bigg($}

\put(-40,20){\line(1,0){15}} \put(-40,15){\line(1,0){15}}
\put(-40,10){\line(1,0){5}}

\put(-25,20){\line(0,-1){5}} \put(-30,20){\line(0,-1){5}}
\put(-35,20){\line(0,-1){10}} \put(-40,20){\line(0,-1){10}}

\put(-21,15){,}

\put(-15,20){\line(1,0){15}} \put(-15,15){\line(1,0){15}}
\put(-15,10){\line(1,0){5}}

\put(-15,20){\line(0,-1){10}} \put(-10,20){\line(0,-1){10}}
\put(-5,20){\line(0,-1){5}} \put(0,20){\line(0,-1){5}}

\put(3,13.0){$\bigg)$}

\put(-28.5,16){$4$} \put(-33.5,16){$3$} \put(-38.5,16){$1$}
\put(-38.5,11){$3$} \put(-13.5,16){$2$} \put(-8.5,16){$3$}
\put(-3.5,16){$5$} \put(-13.5,11){$2$}

\put(10,14){$\leadsto$}

\put(20,13.5){$[1,3,4|2,3,5] \cdot [3|2]$}

\end{picture}}
\]

As said in the introduction we want to understand the relations
of the $t$-minors of $X$. Therefore we have to investigate the
kernel $J_t(m,n)$ of the natural graded homomorphism
\[
\pi \ : \ S_t(m,n)= \Sym \Bl(\Bw^t V \tensor \Bw^t W^*\Br)
\to A_t(m,n).
\]
When there is no ambiguity we will just write $S_t$ and $J_t$
instead of $S_t(m,n)$ and $J_t(m,n)$.

\begin{remark}\label{partcases}
Consider the following numerical situations:
\begin{align}
\text{(a)} \ \ & t =1 \nonumber\\
\text{(b)} \ \ & n \leq t+1 \label{easycases}\\
\text{(c)} \ \ & t =m \nonumber
\end{align}
In the cases (a) and (b) the algebra $A_t$ is a polynomial ring, so
that $J_t=0$. In case (a) this is trivial, and in case (b) it
follows from the fact that the Krull dimension of $A_t$ is equal to
$mn$ if $t<m$ (see the book of Bruns and Vetter \cite[Prop.
10.16(b)]{BV}). In the case (c) $A_t$ is the coordinate ring of the
Grassmannian $G(m,n)$. In this case the ideal
$J_t$ is generated by the Pl\"ucker relations. In particular it is
generated in degree~$2$.
\end{remark}

%In view of Remark \ref{partcases} we will will usually exclude
%the cases \eqref{easycases} throughout the paper. In other
%words, we will assume that
%\[
%1<t<m \mbox{ \ \ and  \ \ }n>t+1.
%\]
%
Notice that the group $G$ acts in an obvious way on the
polynomial ring $S_t$. Furthermore the map $\pi$ is
$G$-equivariant. This implies that $J_t$ is a
$G$-subrepresentation of $S_t$, so that it has a decomposition
as a direct sum of irreducible representations. Moreover, if
$\Wc \tensor \Wl^*$ is an irreducible representation of $S_t$,
then it collapses to zero or it is mapped isomorphically to
itself. So \eqref{decat} implies that $\Wc \tensor \Wl^*
\subseteq J_t$ whenever $\gamma \neq \lambda$. However it is
difficult to say anything more at this point. In fact, a
decomposition of $S_t$ as direct sum of irreducible
representations is unknown, falling into the category of
\emph{plethysm problems}.

Let us note a useful duality that does not depend on
representation theory.

\begin{prop}\label{dual}
The graded algebras $A_t(n,n)$ and $A_{n-t}(n,n)$ are
isomorphic.
\end{prop}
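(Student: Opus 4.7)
The plan is to construct an explicit graded isomorphism via the birational inversion map on $n\times n$ matrices. Set $\Delta=\det(X)$ and pass to the localization $R(n,n)[\Delta^{-1}]$, which is the coordinate ring of $\GL_n$. The involution $X\mapsto X^{-1}$ induces a $\kk$-algebra automorphism $\iota$ of $R(n,n)[\Delta^{-1}]$, defined concretely by $\iota(x_{ij})=(X^{-1})_{ij}=\Delta^{-1}(\operatorname{adj}(X))_{ij}$; it is an involution because matrix inversion is.

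The key computation is Jacobi's classical identity for minors of an inverse matrix: for any $I,J\subseteq\{1,\dots,n\}$ with $|I|=|J|=t$,
\[
\iota\bigl([I|J]\bigr)=(-1)^{\sigma(I)+\sigma(J)}\,\Delta^{-1}\,[J^c|I^c],
\]
where $\sigma(S)=\sum_{s\in S}s$ and $[J^c|I^c]$ is the complementary $(n-t)$-minor of $X$, with row and column index sets swapped. In particular $\iota$ sends the degree $1$ piece $(A_t(n,n))_1$ into $\Delta^{-1}(A_{n-t}(n,n))_1$, and multiplicativity of $\iota$ then yields $\iota((A_t)_d)\subseteq \Delta^{-d}(A_{n-t})_d$ for every $d\geq 0$.

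Now define
\[
\phi:A_t(n,n)\longrightarrow A_{n-t}(n,n),\qquad \phi(a)=\Delta^d\,\iota(a)\ \text{for } a\in(A_t)_d.
\]
By the previous paragraph $\phi(a)$ lies in $(A_{n-t})_d\subseteq R(n,n)$, so it is a well-defined graded map into the polynomial ring; multiplicativity is immediate from that of $\iota$, since the exponents of $\Delta$ add correctly: for $a\in(A_t)_d$, $b\in(A_t)_e$,
\[
\phi(ab)=\Delta^{d+e}\iota(a)\iota(b)=(\Delta^d\iota(a))(\Delta^e\iota(b))=\phi(a)\phi(b).
\]
The same recipe run from the other side defines $\psi:A_{n-t}(n,n)\to A_t(n,n)$; using $\iota^2=\operatorname{id}$ and $\iota(\Delta)=\Delta^{-1}$ one checks $\psi\circ\phi=\operatorname{id}$, so $\phi$ is an isomorphism of graded $\kk$-algebras.

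The main (and modest) obstacle is bookkeeping: tracking the signs in Jacobi's identity, and verifying that $\Delta^d\iota(a)$ really lies in the polynomial ring $R(n,n)$ and not merely in its localization. Both points are handled at once by the explicit cofactor formula for $X^{-1}$, which reduces everything to the above identity.
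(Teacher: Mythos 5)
Your proof is correct, but it takes a different route from the paper. You work entirely inside $R(n,n)[\Delta^{-1}]=\kk[\GL_n]$, where $\Delta=\det X$: the comorphism $\iota$ of matrix inversion together with Jacobi's identity $\iota([I|J])=(-1)^{\sigma(I)+\sigma(J)}\Delta^{-1}[J^c|I^c]$ shows $\iota((A_t)_d)\subseteq\Delta^{-d}(A_{n-t})_d$, and twisting by $\Delta^{\deg}$ gives the graded isomorphism, explicitly $\prod_k[I_k|J_k]\mapsto\pm\prod_k[J_k^c|I_k^c]$. The paper instead goes through the coordinate ring of the Grassmannian $G(n,2n)$: it takes the subalgebra $P$ generated by the $n$-minors of the $n\times 2n$ matrix $Y$ having exactly $t$ columns among the first $n$, shows that the standard homomorphism onto $\kk[Z]$ restricts to an isomorphism $P\cong A_t(n,n)$ (because the kernel is generated by $\Delta\pm1$, which has no nonzero homogeneous multiples), and obtains $P\cong A_{n-t}(n,n)$ by dehomogenizing with respect to the complementary maximal minor $[1,\dots,n]$; so both algebras are identified with one and the same subalgebra of the Pl\"ucker ring, with no sign bookkeeping and with the link to standard monomial theory kept visible. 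Your argument is more elementary and yields an explicit formula for the isomorphism, at the cost of the localization and sign verifications you mention; note only that $\psi\circ\phi=\operatorname{id}$ by itself gives injectivity of $\phi$, and you should remark that the symmetric computation (with $t$ and $n-t$ interchanged) gives $\phi\circ\psi=\operatorname{id}$ as well, whence the claimed isomorphism.
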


\begin{proof}
We use the notation of \cite[Section 4]{BV}. In the coordinate
ring $G(Y)$ of the Grassmannian $G(n,2n)$ we consider the
subalgebra $P$ generated by all $n$-minors with exactly $t$
columns in the first $n$ columns of the $n\times 2n$ matrix
$Y$. The standard homomorphism $\phi$ that maps $G(Y)$ to
$\kk[Z]$ where $Z$ is an $n\times n$-matrix of indeterminates,
maps $P$ surjectively onto $A_t(n,n)$. However, $\phi\mid_P$ is
an isomorphism since the kernel of $\phi$ is generated by
$\Delta\pm1$ where $\Delta$ is the minor $[n+1,\dots,2n]$ of
$Y$. As $\phi\mid_P$ is a homomorphism of graded algebras, its
kernel is generated by homogeneous elements, but $\Delta\pm1$
has no homogeneous nonzero multiples.

If we consider dehomogenization with respect to the minor
$[1,\dots,n]$ we obtain an isomorphism of $P$ and
$A_{n-t}(n,n)$.
\end{proof}

A special case of the proposition is the isomorphism of
$A_{n-1}(n,n)$ and $A_1(n,n)$ observed above.

In the following we will often speak about ``minimal
generators" or even ``minimal subspaces'' of $J_t$. Let us make
this terminology precise. An element $x$ in $J_t$ is a
\emph{minimal generator} if its image under the natural map
$J_t\to J_t/(S_t)_1\cdot J_t$ is non-zero, and $x_1,\dots,x_n$
are said to be \emph{minimal generators} if their images in
$J_t/(S_t)_1\cdot J_t$ are $\kk$-linearly independent, in other
words, if $x_1,\dots,x_n$ can be extended to a minimal system
of generators. A $\kk$-subspace $Q$ is \emph{minimal} if the
natural map $Q\to J_t/(S_t)_1\cdot J_t$ is injective.

It should be noted that minimal relations of $t$-minors stay
minimal if the matrix is increased and can be extended to
minimal relations of $t'$-minors for $t'\ge t$. In fact, in
\cite[5.2]{BC5} the following has been proved:

\begin{prop}\label{full_retract}
$A_t(m,n)$ is a graded $\kk$-algebra retract of $A_{t'}(m',n')$
if $n'-n,m'-m\ge t'-t$.
\end{prop}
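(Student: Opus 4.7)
The plan is to exhibit graded $\kk$-algebra maps
\[
\iota\colon A_t(m,n) \to A_{t'}(m',n'), \qquad \psi\colon A_{t'}(m',n') \to A_t(m,n),
\]
with $\psi\circ\iota = \operatorname{id}$, where both gradings assign degree one to the defining minors. Set $s = t'-t$ and fix index sets $R_0 = \{m+1,\ldots,m+s\}\subseteq [m']$ and $C_0 = \{n+1,\ldots,n+s\}\subseteq [n']$ of extra row and column indices.

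Define $\iota$ on generators by $[I\mid J]\mapsto [I\cup R_0\mid J\cup C_0]$. To show this is a well-defined $\kk$-algebra isomorphism onto its image, I would introduce the specialization $\Psi\colon R(m',n')\to R(m,n)$ sending the generic matrix $X'$ to the block-diagonal matrix $\diag(X, I_s, 0)$. Laplace expansion along rows $R_0$ and columns $C_0$ gives $\Psi([I\cup R_0\mid J\cup C_0]) = [I\mid J]$, so any polynomial relation among the enlarged $t'$-minors specializes to the corresponding relation among the $t$-minors of $X$, and vice versa. It follows that $\iota$ is a graded isomorphism of $A_t(m,n)$ onto the subalgebra $B \subseteq A_{t'}(m',n')$ generated by the enlarged minors.

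Constructing the graded retract $\psi$ is the main obstacle, because the specialization $\Psi$ restricted to $A_{t'}(m',n')$ does \emph{not} take values in $A_t(m,n)$: a $t'$-minor $[I\mid J]$ with $|I\cap R_0| = a < s$ specializes in the nonzero cases to a $(t'-a)$-minor of $X$, and such a minor does not lie in $A_t(m,n)$ in general (for $t\ge 2$, products of $t$-minors have polynomial degree a multiple of $t$, while $t'-a$ need not be). To bypass this I would realize $\psi$ at the level of symmetric algebras. Choose splittings $V' = V\oplus V''$ and $W' = W\oplus W''$ with $\dim V''\ge s$, $\dim W''\ge s$, and volume forms $\omega_V\in \Bw^s V''$, $\omega_W\in \Bw^s(W'')^*$. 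Define on the generating modules the projection
\[
\Bw^{t'}V' \tensor \Bw^{t'}(W')^* \twoheadrightarrow \Bw^t V \tensor \Bw^t W^*
\]
onto the summand $\Bw^t V \tensor \omega_V \tensor \Bw^t W^* \tensor \omega_W$, followed by the canonical identification with $\Bw^t V \tensor \Bw^t W^*$, and extend multiplicatively to a graded map $S_{t'}(m',n')\to S_t(m,n)$. The technical core is then to verify that this map carries the defining ideal $J_{t'}$ into $J_t$; via the decomposition \eqref{decat}, this reduces to checking that every $(t,d)$-admissible partition $\lambda$ contributing to $(A_t)_d$ corresponds to a $(t',d)$-admissible partition in the enlarged setup, with the Schur summands intertwined compatibly by the chosen projection. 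Granting this compatibility, $\psi$ descends to the desired graded map, and $\psi\circ\iota = \operatorname{id}$ is immediate because the projection inverts the summand inclusion used to define $\iota$.
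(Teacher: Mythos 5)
Your overall architecture (a section $\iota$ sending $[I|J]\mapsto[I\cup R_0|J\cup C_0]$ and a retraction $\psi$ induced by the projection of $\Bw^{t'}V'\tensor\Bw^{t'}(W')^*$ onto the summand $\Bw^tV\tensor\omega_V\tensor\Bw^tW^*\tensor\omega_W$) is a reasonable plan, and you correctly observe that the naive specialization $X'\mapsto\diag(X,I_s,0)$ does not itself yield the retraction. However, the two statements that carry essentially all of the content of the proposition are asserted rather than proved. First, the well-definedness of $\iota$: the specialization $\Psi$ only shows that every relation among the enlarged $t'$-minors specializes to a relation among the $t$-minors, i.e.\ one inclusion of kernels (so that $\Psi$ restricts to a surjection from the subalgebra $B$ generated by the enlarged minors onto $A_t(m,n)$). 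The ``and vice versa'' -- that every polynomial relation among the $t$-minors of $X$ is also satisfied by the enlarged $t'$-minors -- is exactly the nontrivial direction needed for $\iota$ to exist, and it does not follow from Laplace expansion or from the specialization argument; it needs a genuine proof (it is true, cf.\ the Pl\"ucker example one can check for $t=2$, $t'=3$, but not for formal reasons). Second, for $\psi$ you reduce everything to the claim $\psi(J_{t'})\subseteq J_t$ and then write ``granting this compatibility.'' Unwinding the definition, $\psi$ sends the variable of an enlarged minor to the variable of the corresponding small minor and all other $t'$-minor variables to $0$, so the claim $\psi(J_{t'})\subseteq J_t$ is literally the statement that this substitution descends to an algebra map $A_{t'}(m',n')\to A_t(m,n)$ -- i.e.\ it is the retraction itself. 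The proposed reduction via \eqref{decat} to a matching of $(t,d)$-admissible with $(t',d)$-admissible partitions is only bookkeeping of graded pieces; it says nothing about compatibility with the multiplication, which is where the difficulty lies.

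To close the gap one needs an actual structural input. One route is the standard monomial theory (straightening law) of $A_{t'}(m',n')$: using the fine $\ZZ^{m'}\times\ZZ^{n'}$-grading, the span $B'$ of all multidegrees in $(A_{t'})_d$ whose components along $R_0$ and $C_0$ all equal $d$ is a subalgebra whose complementary span is an ideal, giving a splitting $A_{t'}=B'\oplus C'$; the standard bitableaux lying in $B'$ are exactly those with $d$ factors each containing $R_0$ and $C_0$, and deleting these rows and columns matches them bijectively with the standard bitableaux of $A_t(m,n)$, so $\dim B'_d=\dim(A_t)_d$ and the surjection $\Psi|_{B}$ forces $B=B'\cong A_t(m,n)$. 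Some argument of this kind (or the representation-theoretic one) is unavoidable; as it stands your text defers precisely these points. Note also that the paper itself does not prove this proposition but quotes it from \cite[5.2]{BC5}, so any self-contained proof must supply the above ingredients explicitly.
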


\subsection{The passage to the tensor algebra}\label{sec_tensalg}

In order to avoid the difficulties just described, we go ``one
more step to the left", in a way that we are going to outline.

Consider the Segre product  $T_t(m,n)$  of the tensor algebras $T(\Bw^t V)$ and $T(\Bw^t W^*)$ which is ($G$-equivariantly
isomorphic to) the tensor algebra $T(\Bw^t V\tensor \Bw^t W^*)$.
We have the projection from the tensor algebra to the
symmetric algebra
 \[
\phi : T_t(m,n) \to S_t(m,n).
\]
 whose kernel is a two-sided ideal generated in degree $2$.  When it does not raise confusion, we
simply write $T_t$ for $T_t(m,n)$. Finally, we have a
$G$-equivariant surjective graded homomorphism
\[
\psi = \pi \circ \phi : T_t(m,n) \to A_t(m,n).
\]
Its kernel is denoted by  $K_t(m,n)$ or simply $K_t$. Since
$\Ker(\phi)$ is generated in degree two and $J_t$ is generated
in degree at least two, in order to find the maximum degree of
a minimal generator of $J_t$ we can study the maximum degree of
a minimal generator of the two-sided ideal $K_t$. Actually we
can say more: If some element $x$ of an irreducible
subrepresentation $Q$ of $S_t$ is a minimal generator of $J_t$,
then the whole $\kk$-basis of such an irreducible
representation consists of minimal generators of $J_t$. In
fact, if $x\notin (S_t)_1\cdot (J_t)_{d-1}$, then the $G$-equivariant
map $Q\to J_t/\bigl((S_t)_1\cdot (J_t)_{d-1}\bigr)$ has to be injective. The same holds for $T_t$ and $K_t$. Therefore we
are allowed to speak about ``minimal irreducible
representations'' or ``minimal bi-shapes'' in the kernel.

\begin{lemma}\label{redtotensor}
Let $d\ge 3$ be an integer. An irreducible representation of
$(T_t)_d$ is minimal in $K_t$ if and only if is minimal in
$J_t$.
\end{lemma}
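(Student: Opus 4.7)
The plan is to pass everything through the surjection $\phi\colon T_t \to S_t$ and compare the subspaces that detect minimality of generators on each side. Introduce
\[
N_d = (T_t)_1 (K_t)_{d-1} + (K_t)_{d-1}(T_t)_1, \qquad M_d = (S_t)_1(J_t)_{d-1},
\]
which (using that $K_t$ is a two-sided ideal in the tensor algebra $T_t$ and that $T_t$ is generated in degree $1$) are exactly the degree-$d$ pieces of $(T_t)_+ K_t + K_t (T_t)_+$ and $(S_t)_+ J_t$. Consequently a $G$-submodule of $(K_t)_d$ is minimal in $K_t$ precisely when it has nonzero image in $(K_t)_d/N_d$, and analogously for $J_t$ modulo $M_d$. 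Since $\phi$ restricts to an isomorphism $(T_t)_1 \xrightarrow{\sim} (S_t)_1$ and maps $K_t$ onto $J_t$, a direct check gives $\phi(N_d) = M_d$.

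The key step will be the inclusion
\[
(\ker \phi)_d \subseteq N_d \qquad \text{for every } d \geq 3.
\]
This uses the hypothesis (recorded earlier in the excerpt) that $\ker(\phi)$ is a two-sided ideal generated in degree $2$: every element of $(\ker\phi)_d$ is then a sum of triple products $a \cdot r \cdot b$ with $r \in (\ker\phi)_2 \subseteq (K_t)_2$ and $\deg a + \deg b = d - 2 \geq 1$. Thus at least one of $a, b$ lies in $(T_t)_+$; splitting off a single factor from $(T_t)_1$ and absorbing the rest into $K_t$ (which is two-sided) places the product in $N_d$.

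Armed with these two facts, $\phi$ descends to a $G$-equivariant isomorphism $\overline{\phi}_d \colon (K_t)_d/N_d \xrightarrow{\sim} (J_t)_d/M_d$. Surjectivity is immediate from $\phi(K_t) = J_t$ and $\phi(N_d) = M_d$; for injectivity, if $\phi(x) \in M_d = \phi(N_d)$, one corrects $x$ by an element of $N_d$ so that the difference lies in $(\ker \phi)_d$, which the key inclusion puts in $N_d$ as well. Unwinding this for an irreducible $G$-submodule $Q \subseteq (T_t)_d$ contained in $K_t$ gives the lemma: either $\phi(Q) = 0$, in which case $Q \subseteq (\ker\phi)_d \subseteq N_d$ and $Q$ is non-minimal in $K_t$ (consistent with $\phi(Q)$ not being a nonzero submodule of $J_t$), or $\phi|_Q$ is injective, and through $\overline{\phi}_d$ the minimality of $Q$ in $K_t$ is equivalent to the minimality of $\phi(Q) \cong Q$ in $J_t$.

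The main (indeed only) obstacle is establishing the inclusion $(\ker\phi)_d \subseteq N_d$ for $d \geq 3$: this is where the hypothesis $d \geq 3$ is genuinely used, and it is the one point where the non-commutative, two-sided nature of $K_t \subseteq T_t$ must be treated carefully, accounting for both left and right multiplication by $(T_t)_+$. Everything else is a clean diagram chase once this is in place.
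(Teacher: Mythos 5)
Your proof is correct and follows essentially the same route as the paper, which justifies the lemma precisely by the observation that $\Ker(\phi)$ is a two-sided ideal generated in degree $2$ (hence contained in $K_t$ and absorbed into $(T_t)_1K_t+K_t(T_t)_1$ in degrees $\ge 3$), so that $\phi$ identifies the minimal-generator quotients of $K_t$ and $J_t$ in each degree $d\ge 3$. You have merely written out the diagram chase that the paper leaves implicit in the paragraph preceding the lemma.
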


The advantages of passing to $T_t$ are that it ``separates rows
and columns'' (of the minors) and that its decomposition in
irreducible $G$-representations is available, see Proposition
\ref{tensordec}. The disadvantage is that we have to work in a
noncommutative setting. Before describing the decomposition of
$T_t$ it is convenient to introduce a definition.

\begin{definition}
We say that a diagram $\aa$ is a \emph{$t$-predecessor} (or
simply \emph{predecessor}) of a $(t,d)$-admissible diagram
$\lambda$ if $\alpha$ is $(t,d-1)$-admissible, $\alpha_1\leq
\lambda_1\leq \alpha_1+t$ and $\alpha_i\leq \lambda_i\leq
\alpha_{i-1}$ for all $i\geq 2$.

If $\aa$ is a predecessor of $\lambda$, then $\lambda$ is a
\emph{successor} of $\aa$.
\end{definition}

The notion of predecessor (or successor) reflects \emph{Pieri's
formula} (for example, see \cite[Corollary 2.3.5]{We}):
\begin{equation}
L_\aa V\tensor \Bw^t V\cong \Dirsum L_\lambda V\label{Pieri}
\end{equation}
where $\lambda$ runs through the successors of $\aa$.

\begin{prop}\label{tensordec}
As a $G$-representation, $(T_t)_d$ decomposes as
\[
(T_t)_d \cong \bigoplus_{\gamma ,\lambda} (\Wc \tensor \Wl^*)^{n(\gamma , \lambda)},
\]
where the sum runs over the $(t,d)$-admissible diagrams
$\gamma$ and
$\lambda$ with
$\gamma_1\leq m$, $\lambda_1\leq n$; the multiplicity $n(\gamma
,\lambda)=\mult_{(\gamma|\lambda)}(T_t)$ is a positive integer,
described recursively as follows:
\begin{enumerate}
\item If $\gamma  = \lambda = (t)$, then $n(\gamma,
    \lambda)=1$;
\item If $\gamma$ and $\lambda$ are $(t,d)$-admissible
    partitions with $d>1$, then $n(\gamma , \lambda) = \sum
    n(\alpha ,\beta)$ where the sum runs over all
    $t$-bi-predecessors $(\alpha|\beta)$ of $(\gamma|\lambda)$.
\end{enumerate}
\end{prop}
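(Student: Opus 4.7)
The plan is induction on $d$, exploiting the Segre structure
$(T_t)_d \cong (\Bw^t V)^{\tensor d}\tensor (\Bw^t W^*)^{\tensor d}$
together with repeated application of Pieri's formula \eqref{Pieri} independently to the $V$ and $W^*$ factors. For the base case $d=1$, one has $(T_t)_1 = L_{(t)}V \tensor L_{(t)}W^*$, and the only $(t,1)$-admissible diagram is $(t)$; the multiplicity is $1$, matching condition~(1).

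For the inductive step, I would tensor the inductive description of $(T_t)_{d-1}$ with $\Bw^t V \tensor \Bw^t W^*$ and apply Pieri's formula \eqref{Pieri} in each factor independently: each summand $L_\alpha V \tensor \Bw^t V$ becomes $\Dirsum_\gamma L_\gamma V$ with $\gamma$ ranging over the successors of $\alpha$, and similarly for $\beta$ and $\lambda$ on the $W^*$ side. Since $G = \GL(V)\times \GL(W)$, irreducible summands factor as external tensor products, so collecting coefficients by the resulting bi-shape $(\gamma|\lambda)$ yields the recursion $n(\gamma,\lambda) = \sum n(\alpha,\beta)$ summed over bi-predecessors $(\alpha|\beta)$, which is exactly condition~(2).

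Two bookkeeping facts remain. First, preservation of admissibility: if $\alpha$ is $(t,d-1)$-admissible with at most $d-1$ parts and $\gamma$ is a successor of $\alpha$, then $|\gamma| = |\alpha|+t = td$; moreover introducing a part at position $d+1$ would require $\alpha_d \ge 1$, so $\gamma$ has at most one more part than $\alpha$, hence at most $d$ parts. Second, positivity of the multiplicities: for every $(t,d)$-admissible $\gamma$ with $\gamma_1 \le m$, one must exhibit at least one $(t,d-1)$-admissible predecessor (so that positivity propagates). The pigeonhole bound $\gamma_1 \ge td/d = t$ is the key input. If $\gamma$ has strictly fewer than $d$ parts, delete the $t$ rightmost boxes of row $1$; if $\gamma$ has exactly $d$ parts, delete the whole bottom row (a horizontal strip of $\gamma_d$ boxes in columns $1,\dots,\gamma_d$) and complete it with $t-\gamma_d$ boxes from row $1$ in columns $\gamma_d+1,\dots,t$, which is possible precisely because $\gamma_1 \ge t$. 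Running the analogous construction on the $W^*$ side produces a bi-predecessor. This positivity verification is the main technical point; the rest of the proof is a direct iteration of Pieri's formula.
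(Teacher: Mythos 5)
Your overall strategy -- induction on $d$, splitting $(T_t)_d$ into the $\GL(V)$-factor $\Tensor^d\bigl(\Bw^tV\bigr)$ and the $\GL(W)$-factor $\Tensor^d\bigl(\Bw^tW^*\bigr)$, and applying Pieri's formula to each factor -- is exactly the paper's proof, and your inductive step and the preservation of admissibility are fine. The genuine problem is the explicit construction you use for positivity, i.e.\ for showing that every $(t,d)$-admissible $\gamma$ possesses a $(t,d-1)$-admissible $t$-predecessor. Deleting ``the $t$ rightmost boxes of row $1$'' need not produce a predecessor: for $t=2$, $d=3$, $\gamma=(3,3)$ (strictly fewer than $d$ parts) it yields $(1,3)$, which is not a partition, and the interlacing condition $\gamma_2\le\alpha_1$ fails; a correct predecessor is $(3,1)$. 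Similarly, in the case of exactly $d$ parts, deleting the bottom row and $t-\gamma_d$ further boxes from row $1$ fails for $t=3$, $d=3$, $\gamma=(4,4,1)$, where it produces $(2,4)$; here $(4,2)$ is a predecessor. So, as written, the key existence step can output non-partitions.

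The claim itself is true, but the $t$ removed boxes must in general be spread over several rows. If one removes $r_i$ boxes from the right end of row $i$, the result $\alpha$ is a partition with $\gamma/\alpha$ a horizontal strip if and only if $0\le r_i\le\gamma_i-\gamma_{i+1}$ (setting $\gamma_{k+1}=0$); the total removable number is $\sum_i(\gamma_i-\gamma_{i+1})=\gamma_1\ge t$ by your pigeonhole bound, and when $\gamma$ has exactly $d$ rows one is forced to take $r_d=\gamma_d\le td/d=t$, after which the remaining capacity of rows $1,\dots,d-1$ is $\gamma_1-\gamma_d\ge t-\gamma_d$. Hence a horizontal strip of exactly $t$ boxes can always be removed, e.g.\ greedily from the bottom row upward, and $\alpha_1\le\gamma_1\le m$ holds automatically. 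With this repair your argument is complete and coincides with the paper's proof, which states the reduction to the two tensor factors and leaves precisely these Pieri/induction verifications to the reader.
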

\begin{proof}
It is enough to find a decomposition of $\Tensor^d \Bw^t V$ as
a $\GL(V)$-represen\-ta\-tion and of $\Tensor^d \Bw^t W^*$ as a
$\GL(W)$-representation. (As mentioned above, the irreducible
$G$-repre\-sen\-tations in $(T_t)_d$  are all of type $\Wc
\tensor \Wl^*$ where $\Wc$ is an irreducible
$\GL(V)$-represen\-ta\-tion in $\Tensor^d \Bw^t V$ and $\Wl^*$
is an irreducible $\GL(W)$-representation in $\Tensor^d \Bw^t
W^*$). Now Pieri's formula and an induction easily yield the
conclusion.
\end{proof}

While the decompositions described in \eqref{cauchy} and in
\eqref{decat} are multiplicity free, the numbers
$n(\gamma,\lambda)$ may be, and in fact usually are, bigger
than $1$. As the reader will realize in the course of the
paper, this is a major obstacle to saying something about the
relations between minors.

Since the decomposition of $A_t$ is known, we can easily
compare the decompositions of $S_t$ and $J_t$. In Section
\ref{QuadCub} the comparison will allow us to identify certain
minimal relations. The next proposition follows immediately
from \eqref{decat}.

\begin{prop}\label{multJ_t}
Let $\gamma$ and $\lambda$ be $(t,d)$-admissible partitions for
some $d\ge 0$. Then
$$
\mult_{(\gamma|\lambda)}(J_t)=\begin{cases}
\mult_{(\gamma|\lambda)}(S_t)&\text{if }\gamma\neq\lambda,\\
\mult_{(\gamma|\lambda)}(S_t)-1&\text{if }\gamma=\lambda.
\end{cases}
$$
\end{prop}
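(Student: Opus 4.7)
The plan is to read off both sides from the defining short exact sequence
\[
0 \to J_t \to S_t \xrightarrow{\pi} A_t \to 0,
\]
which is a sequence of graded $G$-modules because $\pi$ is $G$-equivariant. Restricting to the degree $d$ component and using that in characteristic zero every finite dimensional rational $G$-module is completely reducible, I would decompose each term into its $(\gamma|\lambda)$-isotypic components. Since the functor $M\mapsto M_{(\gamma|\lambda)}$ is exact on completely reducible $G$-modules, multiplicities are additive on short exact sequences, giving
\[
\mult_{(\gamma|\lambda)}(S_t)=\mult_{(\gamma|\lambda)}(J_t)+\mult_{(\gamma|\lambda)}(A_t).
\]

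Next, I would invoke the decomposition \eqref{decat}, which tells us that $(A_t)_d$ is the multiplicity-free sum of the modules $L_\mu V\otimes L_\mu W^*$ as $\mu$ runs over the $(t,d)$-admissible partitions. In particular, when $(\gamma|\lambda)$ is a bi-partition with both $\gamma$ and $\lambda$ being $(t,d)$-admissible, we have
\[
\mult_{(\gamma|\lambda)}(A_t)=\begin{cases}1 & \text{if }\gamma=\lambda,\\ 0 & \text{if }\gamma\neq\lambda.\end{cases}
\]
Substituting this into the additivity identity above yields precisely the two cases in the statement.

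There is essentially no hard step here; the proposition is just a bookkeeping consequence of the exactness of isotypic decomposition in characteristic zero together with the explicit form of \eqref{decat}. The only thing one might pause over is justifying complete reducibility on each graded piece, but this is standard because $(S_t)_d$, $(J_t)_d$ and $(A_t)_d$ are all finite dimensional rational representations of the reductive group $G=\GL(V)\times\GL(W)$.
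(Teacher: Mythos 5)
Your proof is correct and is essentially the paper's own argument: the paper simply remarks that the proposition "follows immediately from \eqref{decat}", i.e.\ from the multiplicity-free decomposition of $A_t$ together with the $G$-equivariant surjection $\pi:S_t\to A_t$ and complete reducibility, which is exactly the additivity bookkeeping you spell out. No difference in substance, only in the level of detail.
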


\begin{remark}\label{idealA_t}
It is worth noting that Pieri's formula completely governs the
structure of the $G$-stable ideals in $A_t$.

(a) Let us first discuss the case $t=1$. Let $R=R(m,n)$ and
consider the ideal $I_\sigma$ generated by
$R_{(\sigma|\sigma)}$. By a theorem of \cite{DEP1} (also see
\cite[11.15]{BV}) one has
\begin{equation}
I_\sigma=\Dirsum_\tau R_{(\tau|\tau)}\label{Itau}
\end{equation}
where the sum is extended over all diagrams $\tau\supseteq
\sigma$.

(b) Now let $\lambda$ be $(t,d)$-admissible, and let
$B_\sigma$ be the ideal in $A_t$ generated by
$(A_t)_\sigma =(A_t)_{(\sigma|\sigma)}$. Then
\begin{equation}
B_\sigma=A_t\cap I_\sigma=\Dirsum (A_t)_\tau \label{Whitehead}
\end{equation}
where the sum is taken over all partitions $\tau$ that arise as
iterated $t$-successors of $\sigma$.

The inclusion $\subseteq$ is a direct consequence of Pieri's
formula whereas the opposite inclusion follows from a theorem
of Whitehead \cite[Theorem 7.2]{Whh} who determined the
(necessarily multiplicity free) decomposition of
$R_{(\sigma|\sigma)}\cdot R_{(\tau|\tau)}$ for arbitrary
$\sigma$ and $\tau$, showing that the irreducibles appearing in
it are exactly those that come up in the Littlewood-Richardson
formula for $L_\sigma V\tensor L_\tau V$. For $\tau=(d)$ the
Littlewood-Richardson formula specializes to Pieri's formula.
Then \eqref{Whitehead} follows by induction.
\end{remark}

\subsection{A formula for successors of a Schur
module}\label{sec_succ}

In order to exclude a bi-diagram $(\gamma|\lambda)$ from being
minimal in $J_t$ we must find a bi-diagram $(\gamma'|\lambda')$
such that $(\gamma|\lambda)$ occurs in $(S_t)_1\cdot
(J_t)_{(\gamma'|\lambda')}$. In this subsection we will derive
a formula which allows us to explicitly build a highest weight
vector of shape $\gamma$ from a highest weight vector of shape
$\gamma'$. The formula will be crucial for concrete
computations in Section \ref{Upper}.

More precisely, let $\lambda =
(\lambda_1,\dots,\lambda_k)\vdash N$ and
$\gamma=(\gamma_1,\dots,\gamma_h)\vdash N+t$ be two diagrams.
Furthermore, let $\Lambda$ and $\Gamma$ be tableaux of shapes
$\lambda$ and $\gamma$ on $\{1,\dots,N\}$ and
$\{1,\dots,N+t\}$, of contents $(1,\dots,1)\in \NN^N$ and
$(1,\dots,1)\in \NN^{N+t}$, respectively. We know that an
isomorphic copy of $\YY_\Gamma(\bigotimes^{N+t}V)$ is a direct
summand of $\YY_\Lambda(\bigotimes^NV)\otimes (\bigotimes^tV)$
if and only if $\lambda \subseteq \gamma$. However, in general
$\YY_\Gamma(\bigotimes^{N+t}V)$ is not contained in
$\YY_\Lambda(\bigotimes^NV)\otimes (\bigotimes^tV)$, regardless
of the choice of $\Gamma$. Below, we will discuss how to
produce an element in $\bigotimes^{N+t}V$ which is the highest
weight vector of one of the isomorphic copies of $L_{\gamma}V$
contained in $\YY_\Lambda(\bigotimes^NV)\otimes
(\bigotimes^tV)$ under the condition that $\gamma$ is built
from $\lambda$ by adding $t$ boxes in different columns, or, by
Pieri's formula, shows up in $L_\lambda\tensor \Bw^t V$, and
this is the case in which we are interested.

More precisely, let $\gamma$ be obtained by adding the $t$
boxes
\[
(i_1,j_{1,1}),\dots,(i_1,j_{1,s_1}),(i_2,j_{2,1}),\dots,(i_2,j_{2,s_2}),\dots,(i_p,j_{p,1}), \dots,(i_p,j_{p,s_p})
\]
to $\lambda$ such that \smallskip

\begin{itemize}
\item[(i)] $1\leq i_1<i_2<\dots < i_p\leq h$;
\item[(ii)] if $q>r$, then $j_{q,a}<j_{r,b}$ for all $a$
    and $b$. Moreover $j_{r,a+1}=j_{r,a}+1$ whenever $1\leq a<s_r$;

%\item for every $k$ the indices
%    \NEW$i_{k,j_k,1},\dots,i_{k,s_1}$ are consecutive
%    numbers.
\end{itemize}
\smallskip

Let us define a tableau $T_{\lambda,\pi,\gamma}$ of shape
$\lambda$ on $\{1,\dots,n\}$ for a permutation $\pi\in
\Sigma_{\gamma_{i_1}}$ as follows:
\[
T_{\lambda,\pi,\gamma}(i,j) =
\begin{cases}
\pi(j) & \mbox{if } i=i_{\ell} \mbox{ and } j>j_{\ell -1,s_{\ell -1}}=
\gamma_{i_{\ell -1}}, \\
j & \mbox{otherwise},
\end{cases}
\]
for all $i=1,\dots,k$ and $j=1,\dots,\lambda_i$ (with the
convention that $j_{0,s_0}=0$).

\begin{example}\label{exTableu}
Suppose we want to pass from $\lambda=(7,4,1)\vdash 12$ to
$\gamma=(8,6,2)\vdash 16$. Given $\pi\in \Sigma_8$, the above
tablaeu is:

\[
T_{\lambda ,\pi ,\gamma}=
\begin{tabular}{| c | c | c | c | c | c | c |}
\hline
1 & \ \ 2 \ \ & \ \ 3 \ \ & 4 & \ \ 5 \ \ & \ \ 6 \ \ &$\pi(7)$ \\
\hline
1 & 2 &$\pi(3)$&$\pi(4)$\\
\cline{1-4}
$\pi(1)$\\
\cline{1-1}
\end{tabular}
\]

\vspace{1mm}

\end{example}

\begin{lemma}\label{lemmaformula}
The following element of $\bigotimes^{N+t}V$ is the highest
weight vector of one of the copies of $L_{\gamma}V$ that appear
in the decomposition of $\YY_\Lambda(\bigotimes^N V)\otimes
(\bigotimes^tV)\subseteq \bigotimes^{N+t}V$:
\begin{multline}\label{fromltog} g_{\lambda \leadsto
\gamma}=\\ \sum_{\pi\in
\Sigma_{\gamma_{i_1}}}(-1)^{\pi}\YY_\Lambda(\TTT)\otimes
\bigl(e_{\pi(j_{p,1})}\otimes \cdot\cdot \otimes
e_{\pi(j_{p,s_p})}\otimes \cdots\cdots\otimes
e_{\pi(j_{1,1})}\otimes \cdot\cdot \otimes
e_{\pi(j_{1,s_1})}\bigr)
\end{multline}
\end{lemma}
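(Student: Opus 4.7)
The plan is to establish three properties of $g_{\lambda \leadsto \gamma}$: (i) it has weight $\tilde\gamma$; (ii) it is $\UU_-(V)$-invariant; (iii) it is nonzero. Since $g_{\lambda \leadsto \gamma}$ manifestly lies in the $\GL(V)$-stable subspace $\YY_\Lambda(\bigotimes^N V) \otimes \bigotimes^t V$, these three properties together pin it down as a highest weight vector of one of the copies of $L_\gamma V$ sitting inside this subspace.

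Step (i) is a direct content count. In each summand the identity cells of $\TTT$---those $(i,j)$ with $\TTT(i,j)=j$---contribute a fixed portion of the content, while the ``moving'' cells (the $\pi$-cells of $\TTT$ together with the positions in the $e$-tensor) carry the values $\pi(1), \ldots, \pi(\gamma_{i_1})$, each exactly once. The combinatorial input here is condition (ii) on the added boxes, which forces the column indices of the moving cells to partition $\{1, \ldots, \gamma_{i_1}\}$ into disjoint intervals, one per extended row. Since $\pi$ is a bijection, the total content is independent of $\pi$ and equals $\tilde\gamma$.

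Step (ii) is the heart of the argument. The key reformulation is to commute the sum over $\pi$ past the linear operator $\YY_\Lambda \otimes \mathrm{id}$ (which acts only on the first $N$ tensor factors), obtaining
\[ g_{\lambda \leadsto \gamma} = (\YY_\Lambda \otimes \mathrm{id})(\tilde v), \]
where $\tilde v \in \bigotimes^{N+t}V$ places $e_j$ at each identity position and, jointly at the moving positions, a scalar multiple of the completely antisymmetric tensor $e_1 \wedge \cdots \wedge e_{\gamma_{i_1}}$. From this one shows that $g_{\lambda \leadsto \gamma}$ lies in the image $\YY_\Gamma(\bigotimes^{N+t}V) \cong L_\gamma V$, where $\Gamma$ is the tableau of shape $\gamma$ obtained from $\Lambda$ by labelling the boxes of $\gamma/\lambda$ with $N+1,\ldots,N+t$ in the order dictated by the $e$-tensor. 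Combined with step (i), this grants $\UU_-(V)$-invariance. For step (iii), the monomial arising from $\pi = \mathrm{id}$ together with the identity elements of $\R_\Lambda$ and $\C_\Lambda$ enters $g_{\lambda\leadsto\gamma}$ with coefficient $\pm 1$, and the distinctness of labels in $\Lambda$ together with the permutation structure of $\pi$ rules out any other triple $(\pi, \sigma, \tau)$ producing the same monomial.

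The main obstacle is step (ii), specifically the identification $g_{\lambda\leadsto\gamma} \in \YY_\Gamma(\bigotimes^{N+t}V)$. One has to align the combined antisymmetrization from $\YY_\Lambda$ (over rows of $\lambda$) with the alternating $\pi$-sum (over $\Sigma_{\gamma_{i_1}}$) to reproduce the row-antisymmetrization of $\YY_\Gamma$ (over rows of $\gamma$), while checking that the column symmetrization over $\C_\Lambda$ extends harmlessly to one over $\C_\Gamma$. The Pieri horizontal-strip condition---hypotheses (i) and (ii) on the added boxes, stipulating that the added cells occupy pairwise distinct columns and extend rows in consecutive blocks---is what makes this alignment possible, since the added labels do not interfere with the pre-existing column structure of $\Lambda$.
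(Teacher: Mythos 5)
Your reduction of step (ii) to the claim that $g_{\lambda\leadsto\gamma}$ lies in $\YY_\Gamma(\bigotimes^{N+t}V)$ is a genuine gap: that claim is false in general, and the Pieri condition does not repair it. The alternating $\pi$-sum composed with $\YY_\Lambda\otimes\mathrm{id}$ does not reassemble into the Young symmetrizer $\YY_\Gamma$; the copy of $L_\gamma V$ containing $g_{\lambda\leadsto\gamma}$ inside $\YY_\Lambda(\bigotimes^N V)\otimes\bigotimes^t V$ is in general a different irreducible submodule of the $\gamma$-isotypic component of $\bigotimes^{N+t}V$ than the image of $\YY_\Gamma$. A minimal counterexample: $\lambda=(2)$, $t=1$, $\gamma=(2,1)$, with $\Lambda$ the row tableau with entries $1,2$ and the new box $(2,1)$ labelled $3$ in $\Gamma$. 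Here $\gamma_{i_1}=\gamma_2=1$, so $g_{\lambda\leadsto\gamma}=(e_1\otimes e_2-e_2\otimes e_1)\otimes e_1$, which is indeed a $\UU_-(V)$-invariant of weight $\tl\gamma=(2,1)$ inside $\Bw^2V\otimes V$ (the lemma holds), but the weight-$(2,1)$ line of $\YY_\Gamma(\bigotimes^3V)$ is spanned by $2\,e_1\otimes e_2\otimes e_1-e_2\otimes e_1\otimes e_1-e_1\otimes e_1\otimes e_2$, which is not proportional to $g_{\lambda\leadsto\gamma}$; since two copies of the irreducible $L_{(2,1)}V$ either coincide or intersect in $0$, it follows that $g_{\lambda\leadsto\gamma}\notin\YY_\Gamma(\bigotimes^3V)$. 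So the membership on which your invariance argument rests cannot be established. The paper proves $\UU_-(V)$-invariance directly instead: it applies the elementary matrices $E_{ij}^x$, uses that $\YY_\Lambda$ is alternating along rows to write the error term as $x\sum_{\pi}(-1)^\pi g_{\lambda,\pi,\gamma}(i\mapsto j)$, and kills it by the sign-reversing involution $\pi\mapsto (i\,j)\cdot\pi$.

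Your step (iii) is also not sound as stated: it is not true that no other triple $(\pi,\sigma,\tau)$ produces the monomial coming from the identity triple. Already for $\lambda=(2)$, $t=1$, $\gamma=(3)$ the pure tensor $e_1\otimes e_2\otimes e_3$ arises both from $\pi=\tau=\mathrm{id}$ and from $\pi=\tau=(1\,2)$; the two contributions happen to have the same sign there, but your argument rests on uniqueness, which fails. The paper avoids this by expanding $g_{\lambda\leadsto\gamma}$ not into pure tensors but in the basis $\YY_\Lambda(T)\otimes e_{i_1}\otimes\cdots\otimes e_{i_t}$ with $T$ standard, and by showing that the coefficient of $\YY_\Lambda(T_0)\otimes e_{\underline{i_0}}$ equals $\sum_{\pi\in A}(-1)^\pi(-1)^\pi=|A|>0$, so all contributions to that single coefficient carry the same sign and no cancellation needs to be excluded. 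Your step (i), the weight count, agrees with the paper.
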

\begin{proof}
The element $g_{\lambda \leadsto \gamma}\in \bigotimes^{N+t}V$
belongs to $\YY_\Lambda(\bigotimes^N V)\otimes (\bigotimes^tV)$
by construction. Furthermore its weight is $\tl \gamma$.
Therefore, we need just to show that $g_{\lambda \leadsto
\gamma}$ is $\UU_-(V)$-invariant. Notice that a system of
generators of the group $\UU_-(V)$ is provided by the
elementary transformations $E_{ij}^x$ with $n\geq i>j\geq 1$
and $x\in \kk$, acting on $V$ via
\[
E_{ij}^x(e_k)= e_k+\delta_{ik}xe_j,\qquad k=1,\dots,n
\]
($\delta_{ik}$ is Kronecker's delta). Therefore, we need to
show that $E_{ij}^xg_{\lambda \leadsto \gamma}=g_{\lambda
\leadsto \gamma}$ for all $n\geq i>j\geq 1$ and $x\in \kk$.
Because $\YY_\Lambda$ is alternating on the rows, we have
\[
E_{ij}^xg_{\lambda \leadsto \gamma}=g_{\lambda \leadsto \gamma}+
x\sum_{\pi\in \Sigma_{\gamma_{i_1}}}(-1)^{\pi}g_{\lambda,\pi,\gamma}(i\mapsto j)
\]
where $g_{\lambda,\pi,\gamma}(i\mapsto j)$ means
$\YY_\Lambda(\TTT)\otimes (e_{j_{p,1}}\otimes \cdot\cdot
\otimes e_{j_{p,s_p}}\otimes \cdots\cdot \otimes
e_{j_{1,1}}\otimes \cdot\cdot \otimes e_{j_{1,s_1}})$ with the
unique permuted $e_i$ replaced by $e_j$. Now, for all $\pi\in
\Sigma_{\gamma_{i_1}}$, set $\pi'=(i \ j)\cdot \pi$. Clearly
we have $g_{\lambda,\pi,\gamma}(i\mapsto
j)=g_{\lambda,\pi',\gamma}(i\mapsto j)$. Moreover
$(-1)^{\pi'}=-(-1)^{\pi}$. This implies that
\[
\sum_{\pi\in \Sigma_{\gamma_{i_1}}}(-1)^{\pi}g_{\lambda,\pi,\gamma}(i\mapsto j)=0,
\]
so $E_{ij}^xg_{\lambda \leadsto \gamma}=g_{\lambda \leadsto
\gamma}$.

It just remains to be shown that $g_{\lambda\leadsto\gamma}\neq
0$. For this we rewrite $g_{\lambda\leadsto\gamma}$ as
\[
\sum_{T,\underline{i}}a_{T,\underline{i}}\YY_\Lambda(T)\otimes
e_{i_1}\otimes \cdots \otimes e_{i_t},
\]
where $T$ varies among the standard tableaux of shape $\lambda$
in $\{1,\dots,n\}$, $\underline{i}$ varies in $\{1,\dots,n\}^t$
and the $a_{T,\underline{i}}\in \kk$ are the coefficients.
Since the above representation is a linear combination of
elements of a basis of $\YY_\Lambda(\bigotimes^N V)\otimes
(\bigotimes^tV)$, it is enough to show that at least one of the
$a_{T,\underline{i}}$ is not $0$. This follows immediately from
the fact that $\YY_\Lambda$ is alternating on the rows: let
$\underline{i_0}=(j_{p,1},\dots,j_{p,s_p},\dots,j_{1,1},\dots,j_{1,s_1})$.
The only possibly nonzero coefficient $a_{T,\underline{i_0}}$
corresponds to the tableau $T_0$ of shape $\lambda$ such that
$T(i,j)=j$ for all $(i,j)\in \lambda$. We have that
\[
a_{T_0,\underline{i_0}}=\sum_{\pi\in A}(-1)^{\pi}(-1)^{\pi}=|A|,
\]
where $A\subseteq \Sigma_{\gamma_{i_1}}$ consists in the
permutations $\pi$ such that $\pi(j_{h,k})=j_{h,k}$ for all
$h=1,\dots,p$ and $k=1,\dots,s_h$ and $\pi$ preserves the rows
of $T_0$.
\end{proof}

\begin{example}
In the situation of Example \ref{exTableu}, we have
\[
g_{\lambda\leadsto \gamma}=\sum_{\pi\in \Sigma_8}\mathbb{Y}_{\Lambda}(T_{\lambda,\pi ,\gamma})
\otimes (e_{\pi(2)}\otimes e_{\pi(5)}\otimes e_{\pi(6)}\otimes e_{\pi(8)}).
\]
\end{example}

\begin{remark}\label{remformula}
In view of the application of Lemma \ref{lemmaformula} that we
have in mind let us consider the natural $\GL(V)$-equivariant
surjective map:
\[
f_d:\bigotimes^{dt}V\to \bigotimes^{d}\Bl(\Bw^tV\Br).
\]
If $N=dt$ and the starting shape $\lambda =
(\lambda_1,\dots,\lambda_k)\vdash dt$ in Lemma
\ref{lemmaformula} is $(t,d)$-admissible, then there exists a
tableau $\Lambda$ of shape $\lambda$ on $\{1,\dots,dt\}$ such
that $c(\Lambda)=(1,\dots,1)\in\NN^{dt}$ and
\[
f_d\left(\YY_\Lambda\Bl(\bigotimes^{dt}V\Br)\right)\cong L_{\lambda}V.
\]
In this situation, one can show that $f_d(g_{\lambda \leadsto
\gamma})\neq 0$ by the same method used in the proof of Lemma
\ref{lemmaformula}. In particular, $f_d(g_{\lambda \leadsto
\gamma})$ is the highest weight vector of the unique copy of
$L_{\gamma}V$ which is a direct summand of
$f_d(\YY_\Lambda(\bigotimes^{dt}V))\otimes (\Bw^tV)$.
\end{remark}

\subsection{The coarse decomposition}\label{Estructure}
Set
\[
E=\Bw^tV\qquad\text{and}\qquad  F=\Bw^tW.
\]
Instead of the group $G=\GL(V)\times\GL(W)$ one can also
consider the action of the larger group $H=\GL(E)\times \GL(F)$
on $T_t$ and $S_t$. The main advantage is that the
$H$-structure of $S_t$ is well-understood by the Cauchy
formula:
\begin{equation}
S_t=\Dirsum_{\mu} (S_t)_\mu,\qquad (S_t)_\mu=L_\mu E \tensor L_\mu F^*,
\label{CauchyH}
\end{equation}
with the restrictions imposed on $\mu$ by the dimensions of the
involved vector spaces. However, $H$ does not act on $A_t$, and
the ideal $J_t$ is not an $H$-submodule of $S_t$ (apart from
trivial exceptions). Therefore, in order to make full use of
\eqref{CauchyH} one would have to understand the
$\GL(V)$-decomposition of $L_\mu E$. For example, a
bi-shape $(\gamma|\lambda)$ of partitions $\gamma ,\lambda\vdash
dt$ has multiplicity $\ge 1$ in $S_t$ if and only if there
exists a partition $\mu\vdash d$ such that $L_\gamma V$
occurs in the decomposition of $L_\mu E$, and the same holds
for $L_\lambda W^*$ in $L_\mu F^*$.

In general, the $\GL(V)$-decomposition of $L_\lambda E$ is an
unsolved plethysm. The difficulty of the problem is illustrated
by the fact that copies of  $\Wc$ may appear in $L_\mu E$ for
several $\mu$, and that there is no equivalence relation on
partitions $\gamma ,\lambda\vdash dt$ by which one could decide
whether $(\gamma |\lambda)$ has multiplicity $\ge 1$ in $S_t$.
In order to illustrate the problem and for the discussion of
concrete examples we include plethysms for $t=2$. The tables
have been computed by \textsf{Lie} \cite{lie}. (Despite of the below tables, even for
$t=2$ the $\GL(V)$-modules are not multiplicity free in
general.)
\begin{table}[hbt]
\begin{center}
\begin{tabular}{|l|l|l|}
\hline
$\mu=(1,1,1)$&$\mu=(2,1)$&$\mu=(3)$\\
\hline
(6)&(5,1)&(4,1,1)\\
(4,2)&(4,2)&(3,3)\\
(2,2,2)&(3,2,1)&\\
\hline
\end{tabular}
\end{center}%\vspace{-5mm}
\caption{Plethysms for $L_\mu (\Bw^2 V)$, $\mu\vdash 3$}
\label{Plt23}
\end{table}
\begin{table}[hbt]
\begin{center}
\begin{tabular}{|l|l|l|l|l|}
 \hline
$\mu=(1,1,1,1)$&$\mu=(3,1)$&$\mu=(2,2)$&$\mu=(2,1,1)$&$\mu=(4)$\\
\hline
(8)&         (7,1)&        (6,2)&       (6,1,1)&      (5,1,1,1)\\
(6,2)&       (6,2)&        (5,2,1)&     (5,3)&        (4,3,1)\\
(4,4)&       (5,3)&        (4,4)&       (5,2,1)&      \\
(4,2,2)&     (5,2,1)&      (4,2,2)&     (4,3,1)&      \\
(2,2,2,2)&   (4,3,1)&      (3,3,1,1)&   (4,2,1,1)&    \\
&            (4,2,2)&      &            (3,3,2)&      \\
&            (3,2,2,1)&    &            &             \\
\hline
\end{tabular}%\vspace{-5mm}
\end{center}
\caption{Plethysms for $L_\mu (\Bw^2 V)$, $\mu\vdash 4$}
\label{Plt24}
\end{table}

\begin{remark}\label{non_dir_sum}
Despite of the fact that $J_t$ is not an $H$-ideal in $S_t$ one
could hope for the next best structure with respect to the
$H$-action, namely that $J_t$ is the direct sum of its
intersections with the $H$-irreducibles $(S_t)_\mu$. Clearly,
if a bi-diagram $(\gamma|\lambda)$ occurs with multiplicity $1$
in $S_t$, then the corresponding $G$-irreducible must be
contained in (exactly) one of the $(S_t)_\mu$. However, as soon
as $\mult_{(\gamma|\lambda)}(S_t)\ge 2$, the inclusion
$(J_t)_{(\gamma|\lambda)}\subseteq\Dirsum J_t\cap
(S_t)_\mu$
%\discuss{corrected: equality now inclusion}
may fail.
In fact, it fails already in the smallest possible case, namely
$(4,2|4,2)$, which has multiplicity $2$ in $S_2$ (see Table
\ref{Plt23}) and multiplicity $1$ in $J_2$. We will discuss the
computation in Subsection \ref{nogeg4t=2}.
\end{remark}

One of the few classical known plethysms is
\begin{equation}
\Sym^d\Bl(\Bw^2 V\Br)=\Dirsum_{\lambda\text{ even}\atop \lambda_1\le m}
L_\lambda V\label{plethsym}
\end{equation}
where $\lambda$ is \emph{even} if all its parts $\lambda_i$ are
even; see \cite[p.~63]{We}. The plethysm \eqref{plethsym} has a
companion for exterior powers that we will encounter later on.

The plethysm \eqref{plethsym} can be used in a ring-theoretic
way in connection with the following proposition. (The Segre
product of graded algebras $A=\Dirsum_i A_i$ and $B=\Dirsum_i
B_i$ is the algebra $\Dirsum_i A_i\tensor B_i$.)

\begin{prop}\label{Segre}
There are natural $G$-equivariant projections
\begin{align*}
\alpha:S_t(m,n)&\to \Sym(E)\ \sharp\  \Sym(F^*),\\
\beta: S_t(m,n)&\to \Bw (E)\ \sharp\  \Bw(F^*),
\end{align*}
where $\sharp$ denotes the Segre product.
\end{prop}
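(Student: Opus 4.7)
The plan is to build $\alpha$ and $\beta$ directly from the universal property of the symmetric algebra, by ``symmetrizing/antisymmetrizing the two factors separately'', and then check multiplicativity, $G$-equivariance and surjectivity.

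First, for each $d\ge 0$ I would define multilinear maps
$$
\tilde\alpha_d:(E\tensor F^*)^d\to \Sym^dE\tensor \Sym^dF^*,\qquad \tilde\beta_d:(E\tensor F^*)^d\to \Bw^dE\tensor \Bw^dF^*,
$$
by the formulas $\tilde\alpha_d(e_1\tensor f_1^*,\dots,e_d\tensor f_d^*)=(e_1\cdots e_d)\tensor(f_1^*\cdots f_d^*)$ and $\tilde\beta_d(e_1\tensor f_1^*,\dots,e_d\tensor f_d^*)=(e_1\wedge\cdots\wedge e_d)\tensor(f_1^*\wedge\cdots\wedge f_d^*)$. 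Swapping the $i$-th and $j$-th arguments swaps $e_i\leftrightarrow e_j$ and $f_i^*\leftrightarrow f_j^*$ simultaneously: in $\tilde\alpha_d$ both products are symmetric, in $\tilde\beta_d$ each wedge contributes a sign $-1$ and the two signs cancel. Hence both $\tilde\alpha_d$ and $\tilde\beta_d$ are symmetric in the $d$ arguments and so, by the universal property of $\Sym^d$, they factor uniquely through $\Sym^d(E\tensor F^*)$ to give linear maps $\alpha_d$ and $\beta_d$. Summing over $d$ yields graded $\kk$-linear maps $\alpha,\beta$ out of $S_t(m,n)=\Sym(E\tensor F^*)$.

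To check that $\alpha$ is a ring homomorphism I would note that multiplication in $\Sym(E\tensor F^*)$ is concatenation of ``letters'' $e\tensor f^*$, while the Segre product multiplication is by definition $(a\tensor b)(a'\tensor b')=(aa')\tensor(bb')$; concatenation of the two lists then produces exactly $(e_1\cdots e_d)(e_1'\cdots e_{d'}')\tensor (f_1^*\cdots f_d^*)(f_1'^*\cdots f_{d'}'^*)$, matching the Segre multiplication. The identical argument works for $\beta$ with $\wedge$ replacing the commutative product. $G$-equivariance is automatic, since all constructions used ($\Sym^d$, $\Bw^d$, $\tensor$, and the Segre product) are functors on $G$-representations, and $\tilde\alpha_d,\tilde\beta_d$ are defined by intrinsic operations on the vectors.

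Finally, for surjectivity I would observe that each Segre product is generated as a $\kk$-algebra by its component in bi-degree $(1,1)$, namely $E\tensor F^*$ itself, and that both $\alpha$ and $\beta$ restrict to the identity on $E\tensor F^*\subseteq (S_t)_1$. I do not expect any step here to present a real obstacle; the only thing to be careful about is the sign check in the symmetry of $\tilde\beta_d$, which is exactly where the plethysm companion of \eqref{plethsym} will be relevant when $\beta$ is used later.
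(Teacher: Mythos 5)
Your argument is correct and is essentially the paper's proof: the paper also obtains $\alpha$ and $\beta$ from the identification $\Tensor(E\tensor F^*)=\Tensor E\ \sharp\ \Tensor F^*$ and the universal property of the symmetric algebra, the key point being exactly your sign cancellation, i.e.\ that the Segre product of the exterior algebras is commutative. Your degree-by-degree multilinear formulation is just a more explicit rendering of the same factorization.
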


\begin{proof}
By the universal property of the symmetric algebra, the natural
homomorphisms
\begin{align*}
\Tensor(E\tensor F^*)&=\Tensor E\ \sharp\ \Tensor F^*\to
\Sym E\ \sharp\  \Sym F^*,\\
\Tensor(E\tensor F^*)&=\Tensor E\ \sharp\ \Tensor F^*\to
\Bw E\ \sharp\  \Bw F^*
\end{align*}
are   $G$-equivariant and factor through $S_t$. (Note that the Segre product of the
exterior algebras is commutative.)
\end{proof}

Now we formulate a very useful rule that simplifies many
discussions. It is the represen\-ta\-tion-theoretic analogue of
Proposition \ref{full_retract}.

\begin{prop}\label{propretract}
Let $\mu$ be a partition of $d$ and consider partitions
$\lambda =(\lambda_1,\dots,\lambda_k)\vdash td$ with $k\leq d$
and
$\tilde{\lambda}=(\lambda_1+1,\dots,\lambda_k+1,1,\dots,1)\vdash
dt+d$. If $\dim_{\kk}V\geq \lambda_1+1$, then

$$\mult_\lambda (L_\mu(\Bw^tV)) =  \mult_{\tilde{\lambda}}  (L_\mu(\Bw^{t+1}V))$$
%the following are
%equivalent:
%\begin{itemize}
%\item[(i)] $L_{\lambda}V$ occurs with multiplicity $s$ in
%    $L_\mu(\Bw^tV)$.
%\item[(ii)] $L_{\tilde{\lambda}}V$ occurs with multiplicity
%    $s$ in $ L_\mu(\Bw^{t+1}V)$.
%\end{itemize}
\end{prop}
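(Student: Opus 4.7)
The plan is to deduce the identity by adjoining a line to $V$ and comparing two decompositions of a single plethysm on the enlarged space. Let $V' = V \oplus \kk$. The $\GL(V)$-equivariant splitting
\[
\Bw^{t+1} V' \;\cong\; \Bw^{t+1} V \;\oplus\; (\Bw^{t} V \tensor \kk)
\]
lets the auxiliary $\GL(\kk)=\kk^\times$ track how increasing $t$ by one affects the plethysm: it acts with weight $0$ on the first summand and with weight $1$ on the second.

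First I apply $L_\mu$ to this splitting. By the skew-plethystic expansion $L_\mu(A \oplus B) = \Dirsum_{\sigma \subseteq \mu} L_\sigma A \tensor L_{\mu/\sigma} B$ together with $L_{\mu/\sigma}(X \tensor \kk) = L_{\mu/\sigma} X \tensor \kk^{|\mu/\sigma|}$ (which holds because $\kk$ is one-dimensional and $L_{\mu/\sigma}$ is homogeneous of degree $|\mu/\sigma|$), the $\GL(\kk)$-weight $d=|\mu|$ isotypic component of $L_\mu(\Bw^{t+1}V')$ is exactly $L_\mu(\Bw^t V) \tensor \kk^d$, since only the term $\sigma=\emptyset$ survives. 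On the other hand, I can decompose $L_\mu(\Bw^{t+1}V') = \Dirsum_\rho \mult_\rho(L_\mu(\Bw^{t+1}V'))\,L_\rho V'$ as a $\GL(V')$-representation and restrict each summand $L_\rho V'$ to $\GL(V)\times\GL(\kk)$ via the branching rule. In the paper's convention, where $L_\rho V$ has highest weight $\tl\rho$, this branching reads $L_\rho V'|_{\GL(V)\times\GL(\kk)} = \Dirsum_\sigma L_\sigma V \tensor \kk^{|\rho|-|\sigma|}$ over $\sigma\subseteq\rho$ with $\rho/\sigma$ a \emph{vertical} strip (obtained by transposing the standard horizontal-strip branching formulated on highest weights). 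Equating $L_\lambda V$-isotypic components in the $\kk^d$-weight piece gives
\[
\mult_\lambda(L_\mu(\Bw^t V)) \;=\; \sum_{\rho:\,\rho/\lambda \text{ vertical strip},\ |\rho/\lambda|=d} \mult_\rho(L_\mu(\Bw^{t+1}V')).
\]

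The final step is a short combinatorial argument that only $\rho=\tilde\lambda$ contributes. Every $\rho$ appearing in $L_\mu(\Bw^{t+1}V')\subseteq(\Bw^{t+1}V')^{\tensor d}$ satisfies $\ell(\rho)\leq d$: each tensor factor has weights with entries in $\{0,1\}$, so the highest weight $\tl\rho$ of any summand has all entries $\leq d$. Writing $\rho_i=\lambda_i+a_i$ with $a_i\in\{0,1\}$ (the vertical-strip condition) and $\sum_{i\ge 1}a_i=d$, the bound $\ell(\rho)\leq d$ forces $a_i=0$ for $i>d$, and then $\sum_{i=1}^d a_i=d$ forces $a_i=1$ for all $i\leq d$. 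Hence $\rho=(\lambda_1+1,\dots,\lambda_k+1,1,\dots,1)=\tilde\lambda$, with $d-k$ trailing ones. The hypothesis $\dim V\geq \lambda_1+1=\tilde\lambda_1$ guarantees that $L_{\tilde\lambda}V$ is non-zero and that $\mult_{\tilde\lambda}(L_\mu(\Bw^{t+1}V'))=\mult_{\tilde\lambda}(L_\mu(\Bw^{t+1}V))$, completing the proof.

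I expect the main subtlety to be the conversion between Weyman's convention used in the paper (in which $L_\lambda V$ is indexed by the transpose of its highest weight) and the Fulton--Harris convention natural for Pieri-type and branching rules; it is this conversion that turns the horizontal strips natural to the branching into vertical strips on the indexing partitions. Once this bookkeeping is kept straight, every step is essentially forced by formal representation-theoretic identities together with the elementary admissibility bound $\ell(\rho)\leq d$ inherited from $(\Bw^{t+1}V')^{\tensor d}$.
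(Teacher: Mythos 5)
Your argument is correct, but it proceeds along a genuinely different route than the paper. The paper proves the proposition by writing down an explicit index-shift map $\xi:\Tensor^d\bigl(\Bw^tV\bigr)\to\Tensor^d\bigl(\Bw^{t+1}V\bigr)$ that prefixes each exterior factor with $e_1$ and shifts all indices by one, and then compares the spaces of $\UU_-(V)$-invariants of weights $\tl\lambda$ and $\tl{\tilde\lambda}$ directly: injectivity of $\xi$ on the relevant subspace gives one inequality of multiplicities, and the observation that every tensor factor of a $\tl{\tilde\lambda}$-invariant must start with $e_1$ gives the other. You instead adjoin a line $V'=V\oplus\kk$, compute the $\kk^\times$-weight-$d$ component of $L_\mu(\Bw^{t+1}V')$ in two ways (via the skew decomposition $L_\mu(A\oplus B)=\Dirsum_\sigma L_\sigma A\tensor L_{\mu/\sigma}B$, and via the $\GL(V)\times\GL(\kk)$ branching rule, correctly transposed to vertical strips in the paper's convention), and then use the admissibility bound $\ell(\rho)\le d$ to show that the only partition $\rho$ with $\rho/\lambda$ a vertical strip of size $d$ is $\tilde\lambda$; a final appeal to stability of plethysm coefficients (legitimate precisely because $\tilde\lambda_1=\lambda_1+1\le\dim V$) passes from $V'$ back to $V$. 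Your approach buys a cleaner, purely formal argument that avoids verifying the injectivity and surjectivity claims about explicit highest weight vectors, at the cost of invoking three standard but nontrivial inputs (the skew-plethysm formula for direct sums, the $\GL_{n+1}\downarrow\GL_n\times\GL_1$ branching rule, and stability of plethysm multiplicities in the number of variables — the last of which you assert rather than justify, though it is exactly covered by the hypothesis); the paper's explicit map, by contrast, is elementary and its mechanism (invariants whose tensor factors all begin with $e_1$) is reused elsewhere, e.g.\ in Remark \ref{remformula} and Proposition \ref{propinthemiddle}.
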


\begin{proof}
Let us consider the map
\[
\xi:\Tensor^d\Bl(\Bw^tV\Br)\to \Tensor^d\Bl(\Bw^{t+1}V\Br)
\]
that extends the assignment
\begin{multline*}
(e_{a_{1,1}}\wedge \cdots \wedge e_{a_{1,t}})\otimes \cdots
\otimes (e_{a_{d,1}}\wedge \cdots \wedge e_{a_{d,t}})\\
\mapsto
 (e_1\wedge e_{a_{1,1}+1}\wedge \cdots \wedge
e_{a_{1,t}+1})\otimes \cdots \otimes (e_1\wedge
e_{a_{d,1}+1}\wedge \cdots \wedge e_{a_{d,t}+1})
\end{multline*}
$\kk$-linearly; here $a_{i,j}\in \{1,\dots,\dim_{\kk}V\}$, and we
use the convention that $e_q=0$ if $q>\dim_{\kk}V$.

Since $\dim_{\kk}V\geq \lambda_1+1$ the vector space $Q$ of the
$\UU_-(V)$-invariants of weight $\tl\lambda$ in $\Tensor^d\Bw^t V$
is contained in the subspace $\Tensor^d\Bw^t V'$ where $V'$ is
generated by $e_1,\dots,e_{n-1}$, $n=\dim_{\kk} V$. On this subspace
$\xi$ is injective. On the other hand, the subspace of the
$\UU_-(V)$-invariants of weight $\tl\tilde\lambda$ in
$\Tensor^d\Bw^{t+1} V$ is contained in $\xi(Q)$ since
each tensor factor of each summand in the representation of such a
$\UU_-(V)$-invariant in the natural basis starts with $e_1$.
\end{proof}

\begin{definition}
If a partition $\tilde \lambda$ arises from $\lambda$ by
prefixing $\lambda$ with columns of length $d$, then
$\tilde\lambda$ is called a \emph{trivial extension} of
$\lambda$.
\end{definition}

Iterated application of Proposition \ref{propretract} shows
that it holds for trivial extensions in general.

For the analysis of degree $3$ relations the following
proposition will turn out useful.

\begin{prop}\label{propinthemiddle}
Let $\lambda$ be a $(t,3)$-admissible diagram with more than
one predecessor. If $\dim_{\kk}V\geq \lambda_1$, then
$L_\lambda V$ is a direct summand of $L_{(2,1)}E$.
\end{prop}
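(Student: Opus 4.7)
My plan is to combine the $\GL(V)$-equivariant decomposition
\[
E \otimes \Sym^2 E \;\cong\; \Sym^3 E \oplus L_{(2,1)} E
\]
(and its analogue $E \otimes \Bw^2 E \cong \Bw^3 E \oplus L_{(2,1)} E$) with Pieri's formula \eqref{Pieri} and the classical plethysm $\Sym^2(\Bw^t V) = \bigoplus_\alpha L_\alpha V$, the sum being taken over the $(t,2)$-admissible two-row partitions $\alpha$ with $\alpha_2 \equiv t \pmod{2}$ (and the analogous formula for $\Bw^2(\Bw^t V)$ with the opposite parity). Applying Pieri to each summand then yields the bookkeeping identity
\[
\mult_\lambda\bigl(L_{(2,1)} E\bigr) \;=\; p - \mult_\lambda(\Sym^3 E) \;=\; q - \mult_\lambda(\Bw^3 E),
\]
where $p$ (resp.\ $q$) denotes the number of predecessors $\alpha$ of $\lambda$ with $\alpha_2 \equiv t$ (resp.\ $\alpha_2 \not\equiv t$) modulo $2$.

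My first observation is combinatorial: the predecessors of a $(t,3)$-admissible $\lambda$ are parameterized by $\alpha_2$ varying over a contiguous range of non-negative integers. Consequently, whenever $\lambda$ has at least two predecessors, both parities are represented, so $p \geq 1$ and $q \geq 1$. The dimension hypothesis $\dim_\kk V \geq \lambda_1$ guarantees that $L_\lambda V$ is itself nonzero and that no Pieri summand gets erased.

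Next I would pick predecessors $\alpha, \alpha'$ of opposite parity and use Lemma \ref{lemmaformula} to produce explicit highest weight vectors $v_\alpha \in L_\alpha V \otimes E \subseteq \Sym^2 E \otimes E$ and $v_{\alpha'} \in L_{\alpha'} V \otimes E \subseteq \Bw^2 E \otimes E$, both of $\GL(V)$-type $L_\lambda V$ living in $E^{\otimes 3}$. Because $v_\alpha$ is symmetric under the transposition of its first two tensor factors, it is automatically killed by the antisymmetrization $E^{\otimes 3} \to \Bw^3 E$; dually, $v_{\alpha'}$ is killed by the symmetrization $E^{\otimes 3} \to \Sym^3 E$.

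The main obstacle, and the heart of the proof, is to show that at least one of the two remaining projections vanishes — for instance, that the symmetrization of $v_\alpha$ into $\Sym^3 E$ is zero; this would immediately place $v_\alpha$ inside the complementary $L_{(2,1)} E$ summand of $\Sym^2 E \otimes E$, producing the desired $\UU_-(V)$-invariant of weight $\tl\lambda$ inside $L_{(2,1)} E$. I would establish this by tracking the $\Sigma_3$-representation carried by the $(p+q)$-dimensional multiplicity space of $L_\lambda V$ in $E^{\otimes 3}$: the identity $p+q = \mult_\lambda(\Sym^3 E) + 2\mult_\lambda(L_{(2,1)} E) + \mult_\lambda(\Bw^3 E)$ combined with the explicit $\Sigma_3$-covariant structure of the tableau sums produced by Lemma \ref{lemmaformula} forces the standard representation $S^{(2,1)}$ to appear in this multiplicity space as soon as predecessors of both parities exist — which is exactly the contribution needed to conclude $\mult_\lambda(L_{(2,1)} E) \geq 1$.
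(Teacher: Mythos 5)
Your setup (the decompositions $\Sym^2E\otimes E\cong \Sym^3E\oplus L_{(2,1)}E$ and $\Bw^2E\otimes E\cong \Bw^3E\oplus L_{(2,1)}E$, the parity bookkeeping coming from Lemma \ref{decs2at}, and the use of Lemma \ref{lemmaformula} to produce an explicit $\UU_-(V)$-invariant inside $L_\alpha V\otimes E$) matches the paper's framework, but the decisive step is missing. Your numerical identities cannot close the argument: $p\ge 1$, $q\ge 1$ together with $\mult_\lambda(L_{(2,1)}E)=p-\mult_\lambda(\Sym^3E)=q-\mult_\lambda(\Bw^3E)$ are perfectly consistent with $\mult_\lambda(L_{(2,1)}E)=0$ (take $p=\mult_\lambda(\Sym^3E)$ and $q=\mult_\lambda(\Bw^3E)$), so the claim that the $\Sigma_3$-structure of the multiplicity space "forces" a copy of $S^{(2,1)}$ as soon as both parities of predecessors occur is exactly the proposition restated, not a proof of it. Moreover, the intermediate goal you set yourself -- showing that the symmetrization of $v_\alpha$ into $\Sym^3E$ vanishes, so that $v_\alpha$ lies entirely in $L_{(2,1)}E$ -- is stronger than what is needed and is false in general: the vector produced by Lemma \ref{lemmaformula} typically has nonzero components in both summands.

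What is actually needed, and what the paper proves, is the weaker statement that the $L_{(2,1)}E$-component of the chosen invariant is nonzero. The paper first reduces to $\lambda=(\lambda_1,\lambda_2)$ with $\lambda_1>\lambda_2>0$ via Proposition \ref{propretract}, then picks a single well-chosen predecessor, namely $(2t)$ if $\lambda_2\le t$ and $(\lambda_1,\lambda_2-t)$ otherwise, so that $g=g_{\alpha\leadsto\lambda}$ lies in $\Sym^3E\oplus L_{(2,1)}E$ or in $\Bw^3E\oplus L_{(2,1)}E$ according to the parity of $t$, and then shows by inspecting the monomials in the support of $g$ (every monomial has $e_1$ in the last tensor factor but $1$ is never in the index sets of both of the first two factors, resp.\ the analogous statement for $e_{\lambda_1}$) that $g$ is neither totally symmetric nor totally alternating; hence its component $h\in L_{(2,1)}E$ is nonzero, and since $h$ is again a $\UU_-(V)$-invariant of weight $\tl\lambda$ it generates a copy of $L_\lambda V$ inside $L_{(2,1)}E$. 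To repair your argument you would need to supply a concrete analysis of this kind (or some other genuinely non-numerical input) showing that your $v_\alpha$ is not purely symmetric, rather than appealing to the bookkeeping identities or to an unproved vanishing of the symmetric projection.
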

\begin{proof}
By Proposition \ref{propretract} we can assume
$\lambda=(\lambda_1,\lambda_2)$. Then $\lambda$ has more than
one predecessor if and only if $\lambda_1>\lambda_2>0$.

If $\lambda_2\leq t$, then $(2t)$ is a predecessor of
$\lambda$. Using Lemma \ref{lemmaformula} and Remark
\ref{remformula}, we know that the element
$g=g_{(2t)\leadsto\lambda}$ is the $\UU_-(V)$-invariant of the
unique copy of $L_\lambda V$ contained in $L_{(2t)}V\otimes E$.
Now, $L_{(2t)}V\otimes E$ is contained in $\Sym^3E\oplus
L_{(2,1)}E$ or in $\Bw^3E\oplus L_{(2,1)}E$, depending on
the parity of $t$. In any case, the element $g$ is neither
symmetric nor alternating. To see this, we need to
consider the $\ell$ monomials in the support of $g$:
\[
(e_{a_1^i}\wedge \cdots \wedge e_{a_t^i})\otimes (e_{b_1^i}\wedge \cdots
\wedge e_{b_t^i})\otimes (e_{c_1^i}\wedge \cdots
\wedge e_{c_t^i}), \ \ \ i=1,\dots,\ell .
\]
Then, for all $i\in\{1,\dots,\ell\}$, we
have $1\in \{c_1^i,\dots,c_t^i\}$, whereas $1$ does not belong
to the intersection $\{a_1^i,\dots,a_t^i\}\cap
\{b_1^i,\dots,b_t^i\}$. So $g=f+h$ with $h\in L_{(2,1)}E$
different from $0$ and $f\in\Sym^3E$ or $f\in\Bw^3E$,
depending on the parity of $p$. In any case, $h$ is a
$\UU_-(V)$-invariant of weight $\tl\lambda$, thus the
$\GL(V)$-space generated by it, which obviously is contained in
$L_{(2,1)}E$, is isomorphic to $L_\lambda V$.

If $\lambda_2>t$, then we consider the predecessor
$(\lambda_1,\lambda_2-t)$ of $\lambda$. The proof of
this case is analog to the previous one, so we do not repeat
it. Let us just say that this time we show that
$g_{(\lambda_1,\lambda_2-t)\leadsto\lambda}$ is neither symmetric nor
alternating by using that, for all $i$, $\lambda_1\notin
\{c_1^i,\dots,c_t^i\}$ and $\lambda_1\in
\{a_1^i,\dots,a_t^i\}\cup \{b_1^i,\dots,b_t^i\}$.
\end{proof}

We introduce a class of partitions that seem to be crucial for
the analysis of $J_t$.

\begin{definition}
We say that a partition $\lambda\vdash dt$ is of \emph{single
$\Bw^t$-type $\mu$} if $\mu\vdash d$ is the only partition such
that the $\GL(V)$-irreducible $L_\lambda V$ occurs in the
$\GL(E)$-irreducible $L_\mu E$ and, moreover, has multiplicity
$1$ in it.

A bi-diagram $(\gamma|\lambda)$ is of \emph{single $\Bw^t$-type}
if both $\gamma$ and $\lambda$ are of single $\Bw^t$-type.
\end{definition}

Clearly, bi-diagrams of single $\Bw^t$-type have multiplicity $1$
in $S_t$ (if they occur at all), but the converse does not
hold, as shown by $(4,3,1|6,2)$ for $t=2$, $d=4$.

\begin{remark}
For every partition $\mu\vdash d$ there exists at least one
partition $\lambda\vdash dt$ of single $\Bw^t$-type $\mu$: just
take $\lambda$ to be the trivial extension of $\mu$ by
prefixing it with $t-1$ columns of length $d$. One can use
$\lambda$ as an indicator for $\mu$: a partition $\gamma$
appears in $\mu$ if and only $(\gamma|\lambda)$ occurs in $S_t$
(with the same multiplicity). Therefore the
$\GL(V)$-decomposition of $L_\mu E$ can be reconstructed for
all $\mu$ from the decomposition of $S_t$.

In general there exist more than one partition of single
$\Bw^t$-type $\mu$. The reader may check that the following
$(t,d)$-admissible diagrams $\lambda$ are of single
$\Bw^t$-type: (i) $\lambda_1\le t+1$, (ii) $\lambda$ is a hook,
i.e.~ $\lambda_2\le 1$. By trivial extension one can construct
further singe $\Bw^t$-type diagrams from (ii). Two other types
will be encountered in Theorem \ref{shapeink3odd} and Corollary
\ref{mult1}.
\end{remark}

\begin{prop}
$\lambda\vdash dt$ is of single $\Bw^t$-type if and only if the
bi-shape $(\lambda|\lambda)$ has multiplicity $1$ in $S_t$ or,
equivalently, does not occur in $J_t$.
\end{prop}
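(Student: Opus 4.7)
The plan is to deduce both equivalences from the $H$-decomposition of $S_t$ in \eqref{CauchyH} combined with Proposition \ref{multJ_t}. First, the Cauchy formula gives $S_t=\Dirsum_{\mu}L_\mu E\tensor L_\mu F^*$, and counting multiplicities of the $G$-irreducible $L_\lambda V\tensor L_\lambda W^*$ on both sides yields
\[
\mult_{(\lambda|\lambda)}(S_t)=\sum_{\mu\vdash d}\mult_\lambda(L_\mu E)\cdot\mult_\lambda(L_\mu F^*).
\]
Since $E=\Bw^tV$ and $F^*=\Bw^tW^*$, and since the Schur functor $L_\mu$ commutes with duality on polynomial representations (so $L_\mu F^*\cong(L_\mu F)^*$), the $\GL(W)$-decomposition of $L_\mu F^*$ into Schur modules $L_\nu W^*$ has the same multiplicities as the $\GL(V)$-decomposition of $L_\mu E$ into $L_\nu V$: both are governed by the (dimension-independent) plethysm coefficients of $L_\mu(\Bw^t\,\cdot\,)$, valid as long as the base space has dimension at least $\lambda_1$. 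Hence $\mult_\lambda(L_\mu F^*)=\mult_\lambda(L_\mu E)$, and so
\[
\mult_{(\lambda|\lambda)}(S_t)=\sum_{\mu\vdash d}\bigl(\mult_\lambda(L_\mu E)\bigr)^2.
\]

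This sum of squares of non-negative integers equals $1$ if and only if exactly one summand equals $1$ and the rest vanish, which is precisely the defining condition for $\lambda$ to be of single $\Bw^t$-type. Any other configuration produces a value at least $2$: two nonzero summands each contribute at least $1$, and a single summand of size $\geq 2$ contributes at least $4$. This establishes the equivalence between being of single $\Bw^t$-type and having multiplicity $1$ in $S_t$.

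For the equivalence with "does not occur in $J_t$", one invokes Proposition \ref{multJ_t} under the implicit assumption that $\lambda$ is $(t,d)$-admissible (the relevant case, since otherwise $(\lambda|\lambda)$ does not appear in $A_t$ and the reduction $\mult(J_t)=\mult(S_t)-1$ must be adjusted). Under this hypothesis, Proposition \ref{multJ_t} immediately gives $\mult_{(\lambda|\lambda)}(J_t)=\mult_{(\lambda|\lambda)}(S_t)-1$, so $(\lambda|\lambda)$ is absent from $J_t$ exactly when $\mult_{(\lambda|\lambda)}(S_t)=1$. There is no serious obstacle in this argument; the only subtle ingredient is the identification of the plethysm multiplicities for $E$ and $F^*$, which is a standard stability property for Schur functors applied to $\Bw^t$.
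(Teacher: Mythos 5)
Your argument is correct and follows essentially the same route as the paper, which simply asserts that the proposition ``follows immediately'' from the Cauchy formula: you spell out that \eqref{CauchyH} gives $\mult_{(\lambda|\lambda)}(S_t)=\sum_{\mu\vdash d}\mult_\lambda(L_\mu E)\,\mult_\lambda(L_\mu F^*)=\sum_\mu\bigl(\mult_\lambda(L_\mu E)\bigr)^2$, identify when this sum of squares equals $1$, and then use Proposition \ref{multJ_t} (together with the $(t,d)$-admissibility caveat, which you rightly flag) for the statement about $J_t$. This is just a fleshed-out version of the paper's one-line proof, with the plethysm-stability point made explicit.
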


This follows immediately from \eqref{cauchy}. Single
$\Bw^t$-type can be characterized recursively:

\begin{prop}\label{single_char}
Let $\lambda\vdash dt$ and $\mu\vdash d$ be partitions such
that $\lambda$ occurs in $L_\mu E$. Then the following are
equivalent:
\begin{itemize}
\item[(i)]$\lambda$ is of single $\Bw^t$-type;
\item[(ii)] the multiplicities of $\lambda$ and of $\mu$ in
    $\Tensor^d(\Bw^t V)$
    coincide;
\item[(iii)] every $t$-predecessor $\lambda'$ of $\lambda$ is
    of single
    $\Bw^t$-type $\mu'$ where $\mu'$ is a $1$-predecessor of $\mu$,
    and no two distinct $t$-predecessors of $\lambda$ share the same
    $1$-predecessor $\mu'$ of $\mu$.
\end{itemize}
\end{prop}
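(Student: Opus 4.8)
The plan is to carry everything out inside the ambient space $\Tensor^d E$, where $E=\Bw^tV$, using that it carries two actions: that of $\GL(E)$, and that of $\GL(V)$ through $\GL(V)\to\GL(E)$, $\phi\mapsto\Bw^t\phi$. By the Cauchy/Schur--Weyl decomposition $\Tensor^d E\cong\Dirsum_{\mu\vdash d}(L_\mu E)^{\oplus g^\mu}$ as a $\GL(E)$-module, where $g^\mu$ is the number of standard Young tableaux of shape $\mu$, equivalently $\mult_\mu(\Tensor^d E)$ for the $\GL(E)$-action. Restricting each $L_\mu E$ to $\GL(V)$ gives
\[
\mult_\lambda(\Tensor^d E)=\sum_{\nu\vdash d}g^\nu\,\mult_\lambda(L_\nu E),
\]
while the multiplicity of $\mu$ occurring in condition (ii) is exactly $g^\mu$. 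Since $\lambda$ occurs in $L_\mu E$, the term $g^\mu\,\mult_\lambda(L_\mu E)\ge g^\mu$ is present; as every $g^\nu\ge 1$ and all summands are nonnegative, $\mult_\lambda(\Tensor^d E)=g^\mu$ holds if and only if $\mult_\lambda(L_\mu E)=1$ and $\mult_\lambda(L_\nu E)=0$ for $\nu\ne\mu$, that is, precisely when $\lambda$ is of single $\Bw^t$-type. This settles (i)$\Leftrightarrow$(ii).

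For (ii)$\Leftrightarrow$(iii) I would induct on $d$, the case $d=1$ being trivial, and read the last factor in $\Tensor^d E=\Tensor^{d-1}E\tensor E$ in two ways. As $E=\Bw^tV=L_{(t)}V$, Pieri's formula \eqref{Pieri} gives $\mult_\lambda(\Tensor^d E)=\sum_{\lambda'}\mult_{\lambda'}(\Tensor^{d-1}E)$ over the $t$-predecessors $\lambda'$ of $\lambda$; as $E=L_{(1)}E$, the one-box rule gives $g^\mu=\sum_{\mu'}g^{\mu'}$ over the $1$-predecessors $\mu'$ of $\mu$. The bridge between them is the claim that for each $1$-predecessor $\mu'$ of $\mu$ some $t$-predecessor $\lambda'$ of $\lambda$ occurs in $L_{\mu'}E$: restricting the Pieri inclusion $L_\mu E\subseteq L_{\mu'}E\tensor E$ to $\GL(V)$ and using $(L_{\mu'}E\tensor E)|_{\GL(V)}=L_{\mu'}E|_{\GL(V)}\tensor\Bw^tV$, the $\GL(V)$-Pieri rule forces $L_\lambda V\subseteq L_{\lambda'}V\tensor\Bw^tV$ for some $\lambda'$ occurring in $L_{\mu'}E$, and such $\lambda'$ is a $t$-predecessor. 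Substituting the level-$(d-1)$ identity into the $\GL(V)$-recursion and regrouping yields $\mult_\lambda(\Tensor^d E)\ge g^\mu$, with equality exactly when no $t$-predecessor of $\lambda$ occurs in any $L_\nu E$ with $\nu$ not a $1$-predecessor of $\mu$, and each $1$-predecessor of $\mu$ is met by a single $t$-predecessor with multiplicity one. Granting (iii), the inductive equivalence (i)$\Leftrightarrow$(ii) makes every single-type predecessor $\lambda'$ contribute $g^{\mu'(\lambda')}$, and the injectivity in (iii) together with the bridge (which makes $\lambda'\mapsto\mu'(\lambda')$ surjective onto the $1$-predecessors) turns this into a bijection, giving $\mult_\lambda(\Tensor^d E)=g^\mu$; so (iii)$\Rightarrow$(ii) is immediate.

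The converse (ii)$\Rightarrow$(iii) is where the real difficulty sits. The numerical equality gives, via the equality conditions above, that every $t$-predecessor occurs only in the $L_{\mu'}E$ with $\mu'$ a $1$-predecessor of $\mu$, and that each such $\mu'$ is hit exactly once; the first statement I would prove structurally by running the restriction argument in the successor direction (a copy of $L_\lambda V$ produced from a $t$-predecessor lying in $L_\nu E$ must, by single $\Bw^t$-type of $\lambda$, reappear in $L_\mu E$, forcing $\nu$ to be a $1$-predecessor of $\mu$). The genuine obstacle is to promote the merely numerical "hit exactly once" to the full assertion of (iii), namely that each $t$-predecessor is \emph{itself} of single $\Bw^t$-type and that distinct predecessors carry distinct types. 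Pure counting is consistent with one predecessor lying in two $1$-predecessors of $\mu$, so excluding this requires the finer interplay of the $\GL(V)$- and $\GL(E)$-structures; here I expect to invoke the explicit highest weight vectors of Lemma \ref{lemmaformula} and Remark \ref{remformula}, which record in which tensor slot, hence in which $L_\nu E$, a given copy of $L_\lambda V$ is produced, combined with the inductive hypothesis applied to each predecessor.
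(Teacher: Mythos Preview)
Your approach is exactly what the paper's one-line sketch (``uses only the recursive formula for multiplicities in Proposition~\ref{tensordec}'') points to: combine the Schur--Weyl decomposition $\Tensor^d E=\bigoplus_\nu (L_\nu E)^{g^\nu}$ with the two Pieri recursions (for $\GL(V)$ via $t$-predecessors, for $\GL(E)$ via $1$-predecessors). Your proof of (i)$\Leftrightarrow$(ii) is clean and complete, and so is (iii)$\Rightarrow$(ii).

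On (ii)$\Rightarrow$(iii) your diagnosis is right, and in fact sharper than the paper's sketch. The identity you implicitly use can be written, for any $\nu\vdash d-1$, as
\[
\sum_{\lambda'\ t\text{-pred of }\lambda}\mult_{\lambda'}(L_\nu E)
\;=\;\mult_\lambda\bigl(L_\nu E\tensor E\bigr)
\;=\;\sum_{\nu''\ 1\text{-succ of }\nu}\mult_\lambda(L_{\nu''}E)
\;=\;\begin{cases}1,&\nu\text{ a $1$-pred of }\mu,\\ 0,&\text{else},\end{cases}
\]
by $\GL(V)$-Pieri, $\GL(E)$-Pieri, and single type of $\lambda$. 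This gives both your bridges at once and shows that the $\{0,1\}$-matrix $M_{\lambda',\mu'}=\mult_{\lambda'}(L_{\mu'}E)$ has all column sums equal to~$1$ and every row nonzero. As you observe, these \emph{column} constraints alone do not force each \emph{row} sum to be~$1$; a single $\lambda'$ lying in two different $L_{\mu'}E$ is numerically consistent with them. So the paper's sketch is genuinely terse here. Your instinct to reach for Lemma~\ref{lemmaformula} is heavier than necessary, though: the missing step can be phrased entirely in the language of multiplicities by passing to the commuting $\Sigma_d$-action. The $\Sigma_d$-module $\Hom_{\GL(V)}(L_\lambda V,\Tensor^d E)\cong\bigoplus_\nu (S^\nu)^{\mult_\lambda(L_\nu E)}$ is, under~(i), the irreducible Specht module $S^\mu$; restricting to $\Sigma_{d-1}$ and using the branching rule on one side and $\GL(V)$-Pieri on the other yields
\[
\bigoplus_{\mu'\ 1\text{-pred of }\mu} S^{\mu'}\;\cong\;\bigoplus_{\lambda'\ t\text{-pred of }\lambda}\Hom_{\GL(V)}(L_{\lambda'}V,\Tensor^{d-1}E),
\]
and it is the \emph{module} structure of this isomorphism (not merely the dimension count) that one must exploit to separate the $\lambda'$-summands into single Specht modules. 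This stays within the Pieri/recursive-formula framework the paper invokes, but it is a genuine additional step that your write-up correctly flags as missing.
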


The proof uses only the recursive formula for multiplicities in
Proposition \ref{tensordec}.

In the next theorem we exploit Pieri's formula \eqref{Pieri}
for $G$ and $H$  and the Cauchy formula \eqref{CauchyH}
simultaneously.

\begin{thm}\label{GandH}
\begin{itemize}
\item[(i)] Let $\mu\vdash d$ be a partition, and let $M$ be the
    set of $1$-successors of $\mu$. Then the linear map
$$
(S_t)_1\tensor (S_t)_\mu\to
\Dirsum_{\nu\in M} (S_t)_\nu
$$
induced by multiplication in $S_t$ is surjective.
\item[(ii)] Let $\gamma$ and $\lambda$ be $(t,d)$-admissible
    partitions. If $(\gamma|\lambda)$ occurs in $(S_t)_\mu$, but there
    exists a $1$-predecessor $\mu'$ of $\mu$ such that all
    bi-predecessors of $(\gamma|\lambda)$ that occur in
    $(S_t)_{\mu'}$ are
    asymmetric, then $(\gamma|\lambda)$ is not minimal
    in $J_t$.
\item[(iii)] With the same notation, suppose that
    $\gamma\neq\lambda$ and that all bi-predecessors of
    $(\gamma|\lambda)$ that occur in $(S_t)_{\mu'}$ for any
    $1$-predecessor $\mu'$ of $\mu$ are symmetric of single
    $\Bw^t$-type. Then $(\gamma|\lambda)$ is minimal in
    $J_t$.
\item[(iv)] Let $(\gamma|\lambda)$ be asymmetric of single
    $\Bw^t$-type $\mu$.
    Then either (a) $(\gamma|\lambda)$ is not minimal in
    $(J_t)_\mu$ or (b) $\gamma$ and $\lambda$ have the same
    predecessors (of single $\Bw^t$-type).
\end{itemize}
\end{thm}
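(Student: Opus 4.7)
The plan is to build (ii)--(iv) on top of (i), which is the engine relating the $H$-Cauchy structure of $S_t$ to Pieri multiplication. For (i), I would combine \eqref{CauchyH} with $H$-Pieri on each factor of $(S_t)_1\tensor (S_t)_\mu=(E\tensor L_\mu E)\tensor (F^*\tensor L_\mu F^*)$ to obtain $\bigoplus_{\nu,\nu'\in M}L_\nu E\tensor L_{\nu'}F^*$. Multiplication is $H$-equivariant and $S_t$ contains only diagonal bi-shapes $L_\rho E\tensor L_\rho F^*$, so only the terms with $\nu=\nu'$ can survive, landing in $\bigoplus_{\nu\in M}(S_t)_\nu$. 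Surjectivity onto each $(S_t)_\nu$ follows from the classical Pieri rule for bideterminants --- any shape-$\nu$ bideterminant factors as a linear element of $E\tensor F^*$ times a shape-$\mu$ bideterminant for any $1$-predecessor $\mu$ of $\nu$ --- together with Schur's lemma applied to the $H$-equivariant map $L_\nu E\tensor L_\nu F^*\to (S_t)_\nu$.

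For (ii) and (iii), I pass from the $H$-equivariant surjection of (i) to $G$-equivariant statements about $(\gamma|\lambda)$-isotypic components. In (ii), applying (i) to a specific $\mu'$ gives a surjection on $(\gamma|\lambda)$-isotypic parts of $(S_t)_1\tensor (S_t)_{\mu'}\twoheadrightarrow (S_t)_\mu$, and $G$-Pieri on the factors shows the source's isotypic part is built from bi-predecessors $(\gamma'|\lambda')$ of $(\gamma|\lambda)$ occurring in $(S_t)_{\mu'}$. By hypothesis each such bi-predecessor is asymmetric, hence lies in $J_t$ by Proposition \ref{multJ_t}; therefore the $(\gamma|\lambda)$-piece of $(S_t)_\mu$ lies in $(S_t)_1\cdot J_t$ and is not minimal. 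For (iii), the analogous analysis on $(S_t)_1\tensor (J_t)_{d-1}$ shows that any $(\gamma|\lambda)$-content of the image must come from bi-predecessors of $(\gamma|\lambda)$ present in $(J_t)_{d-1}$; the symmetric single-$\Bw^t$-type hypothesis together with Proposition \ref{multJ_t} gives $\mult_{(\gamma'|\gamma')}(J_t)=\mult_{(\gamma'|\gamma')}(S_t)-1=0$, so no such bi-predecessor exists. Hence $(S_t)_1\cdot (J_t)_{d-1}$ has no $(\gamma|\lambda)$-content, while $\gamma\neq\lambda$ and Proposition \ref{multJ_t} guarantee $(\gamma|\lambda)$ occurs in $J_t$, yielding minimality.

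For (iv), I would first establish an existence complement to Proposition \ref{single_char}(iii): if $\gamma$ is of single $\Bw^t$-type $\mu$, then for every $1$-predecessor $\mu^*$ of $\mu$ there is a unique $t$-predecessor of $\gamma$ of single $\Bw^t$-type $\mu^*$. Uniqueness is already in Proposition \ref{single_char}(iii); existence follows from the chain $L_\gamma V\subseteq L_\mu E\subseteq L_{\mu^*}E\tensor E$ combined with $G$-Pieri and Proposition \ref{single_char}. Applying this bijection to both $\gamma$ and $\lambda$, each $(S_t)_{\mu'}$ contains exactly one bi-predecessor $(\gamma'|\lambda')$ of $(\gamma|\lambda)$; assuming (a) fails (so $(\gamma|\lambda)$ is minimal in $(J_t)_\mu$), the contrapositive of (ii) forces this bi-predecessor to be symmetric, i.e., $\gamma'=\lambda'$, for every $1$-predecessor $\mu'$ of $\mu$, so that the sets of $t$-predecessors of $\gamma$ and $\lambda$ coincide --- that is (b).

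The main obstacles are (A) establishing surjectivity (not just containment) in (i), which I expect to handle via bideterminants or by viewing $S_t$ as the coordinate ring of matrices, and (B) the existence half of the bijection lemma needed for (iv), which is the substantive new ingredient beyond Proposition \ref{single_char}.
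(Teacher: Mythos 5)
Your treatments of (ii) and (iv) are essentially the paper's arguments ((ii) is the same Pieri argument; in (iv) your ``existence complement'' to Proposition \ref{single_char}(iii), obtained from $L_\gamma V\subseteq L_\mu E\subseteq L_{\mu^*}E\tensor E$ and Pieri, is a legitimate substitute for the paper's appeal to part (i), and the bijection/contrapositive argument then coincides with the paper's proof). The genuine gap is in (iii). The hypothesis only controls the bi-predecessors of $(\gamma|\lambda)$ that occur in $(S_t)_{\mu'}$ for $1$-predecessors $\mu'$ of $\mu$; it says nothing about bi-predecessors occurring in $(S_t)_{\mu''}$ for partitions $\mu''\vdash d-1$ that are not $1$-predecessors of $\mu$, and such bi-predecessors may perfectly well be asymmetric and hence lie entirely in $J_t$. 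So your intermediate claims ``no such bi-predecessor exists in $(J_t)_{d-1}$'' and ``$(S_t)_1\cdot(J_t)_{d-1}$ has no $(\gamma|\lambda)$-content'' are unjustified, and the latter is false in general (for instance when $(\gamma|\lambda)$ also occurs in some $(S_t)_\nu$ with $\nu\neq\mu$). What must be shown is only that the copy of $(\gamma|\lambda)$ inside $(S_t)_\mu$ is not absorbed into $(S_t)_1 J_t$, and for that one has to (a) kill the contribution of predecessors living over non-$1$-predecessors of $\mu$ by the $H$-structure ($(S_t)_1\cdot(S_t)_{\mu''}$ only meets components $(S_t)_\nu$ with $\nu$ a $1$-successor of $\mu''$, hence misses $(S_t)_\mu$), and (b) handle irreducible copies in $J_t$ that are \emph{not} contained in a single $H$-component $(S_t)_{\mu''}$ --- this phenomenon is real, see Remark \ref{non_dir_sum}. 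The paper does exactly this by splitting the relevant submodule $U\subseteq(J_t)_{d-1}$ as $U_1\dirsum U_2\dirsum U_3$, with $U_1=0$ by the symmetry/multiplicity-one hypothesis, $U_3=0$ because a shape in $U_3$ would occur both over a $1$-predecessor and over a non-$1$-predecessor of $\mu$, impossible for single $\Bw^t$-type, and $(S_t)_1U_2\cap(S_t)_\mu=0$. This is where the single $\Bw^t$-type assumption really enters; in your sketch it is used only through Proposition \ref{multJ_t}, which is not enough.

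There is also a flaw in your argument for (i). The reduction via Schur's lemma is fine, but the surjectivity rests on the claim that any shape-$\nu$ bideterminant factors as a linear element of $E\tensor F^*$ times a shape-$\mu$ bideterminant for a $1$-predecessor $\mu$; this is false (already for $\nu=(2)$, $\mu=(1)$: a $2$-minor of the generic matrix of coordinates on $E\tensor F^*$ is an irreducible polynomial, not a product of two linear forms), and highest weight vectors do not factor this way either. The nonvanishing of the projection of $(S_t)_1\cdot(S_t)_\mu$ onto each $(S_t)_\nu$ is precisely the nontrivial content here; the paper obtains it from the De Concini--Eisenbud--Procesi description of the $H$-stable ideal generated by $(S_t)_\mu$ (Remark \ref{idealA_t}(a), see \cite{DEP1} or \cite[11.15]{BV}), which you should cite or reprove rather than appeal to a factorization of bideterminants.
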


\begin{proof} (i) It has already been mentioned in Remark
\ref{idealA_t}(a) that the ideal in $S_t$ generated by
$(S_t)_\mu$ is the sum of all $(S_t)_\nu$ where $\nu$ arises
from $\mu$ by the addition of boxes. This implies claim (i)
(and is equivalent to it by induction).

(ii) By hypothesis all bi-predecessors of $(\gamma|\lambda)$  in
$(S_t)_{\mu'}$ lie in $J_t$ since they are asymmetric. So (i)
implies that $(\gamma|\lambda)$ lies in $(S_t)_1\cdot J_t$.

(iii) Let $U$ be the $G$-submodule of $(J_t)_{d-1}$ generated by
all irreducibles whose shapes are bi-predecessors of $(\gamma|\lambda)$.
We must show that $(\gamma|\lambda)$ does not occur in
$(S_t)_1 U\cap (J_t)_\mu$.

We split $U$ into the sum of three
$G$-submodules, namely the sum $U_1$ of all $G$-irreducibles
whose shape occurs only in $(S_t)_{\mu'}$ for some $1$-predecessor
$\mu'$ of $\mu$, the sum $U_2$ of all submodules whose shape
occurs only in $(S_t)_{\mu'}$ for some non-$1$-predecessor $\mu'$ of
$\mu$ and a complementary summand $U_3$ of $U_1\dirsum U_2$
(which exists by linear reductivity of $G$). In general $U_3$
may be non-zero (see Remark \ref{non_dir_sum}), however all
bi-shapes $(\gamma'|\lambda')$ in $U_3$ must appear in a
$1$-predecessor of $\mu$ as well as in a non-$1$-predecessor.
This is impossible for single $\Bw^t$-type, and so $U_3=0$.
Since $((S_t)_1\cdot U_2)\cap (S_t)_\mu=0$ and $U_1=0$
by hypothesis, $(\mu|\lambda)$ must indeed be minimal in $J_t$.

(iv) It follows from (i) that $(\gamma|\lambda)$ has a
bi-predecessor in $(S_t)_{\mu'}$ for every predecessor $\mu'$
of $\mu$, and because of single $\Bw^t$-type there exists
exactly one such predecessor in every $(S_t)_{\mu'}$.

Suppose first that $(\gamma$ and $\lambda)$ have different
predecessors. Then there must exist a $\mu'$ in which the
single predecessor is asymmetric, and so $(\gamma|\lambda)$ is
not minimal in $J_t$. Otherwise all predecessors are symmetric
of single $\Bw^t$-type and we can apply (iii) in order to
conclude that $(\gamma|\lambda)$ is minimal in $J_t$.
\end{proof}

In particular, $(\gamma|\lambda)$ is minimal in $J_t$ if all
its bi-predecessors (with $\mult_{(\gamma'|\lambda')}(S_t)>0$)
are symmetric of multiplicity $1$. Conversely, if all
bi-predecessors are asymmetric, then $(\gamma|\lambda)$ is not
minimal. However, Theorem \ref{GandH}(ii) is more precise as
the following example shows: for $t=2$ the bi-diagram $(5, 3|7,
1)$ belongs with multiplicity $1$ only to $(S_2)_\mu$ for
$\mu=(2,1,1)$. However, in $(S_2)_\nu$, $\nu=(1,1,1)$ it has no
symmetric bi-predecessor, and therefore it is not minimal in
$J_2$. (But it has the symmetric bi-predecessor $(5,1|5,1)$ of
multiplicity $1$ in $(S_2)_{(2,1)}$.)

On the other hand, Theorem \ref{GandH} does not allow us to
exclude that $(6,2|7,1)$ is minimal in $J_2$, although all
relevant plethysms are known. That it is not minimal will be
documented in Subsection \ref{nogeg4t=2}.

\begin{definition}
The minimal relations $(\gamma|\lambda)$ identified in Theorem
\ref{GandH}(iii) are called \emph{shape relations}:
$(\gamma|\lambda)$ is asymmetric and occurs in $(S_t)_\mu$, but
all bi-predecessors of $(\gamma|\lambda)$ that occur in
$(S_t)_{\mu'}$ for any $1$-predecessor $\mu'$ of $\mu$ are
symmetric of single    $\Bw^t$-type.
\end{definition}

We do not know whether all minimal relations are shape
relations. Raising this question is a main point of the paper.
It is useful to introduce shape relations also in the tensor
algebra:

\begin{definition}
Let $\gamma,\lambda\vdash dt$ be $(t,d)$-admissible,
$\gamma\neq\lambda$. If all bi-predecessors of
$(\gamma|\lambda)$ are symmetric of multiplicity $1$ in $T_t$,
then $(\gamma|\lambda)$ is called a \emph{$T$-shape relation}.
\end{definition}

\begin{prop}\label{Tshape}
$T$-shape relations are minimal in $K_t$, and a $T$-shape
relation that appears in $S_t$ is a shape relation. In
particular, all $T$-shape relations of degree $\ge 3$ are
shape relations.
\end{prop}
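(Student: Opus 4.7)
The plan is to prove the three assertions in sequence. The core observation throughout is that if $(\alpha|\alpha)$ is a symmetric bi-shape with $\mult_{(\alpha|\alpha)}(T_t)=1$, then \eqref{decat} gives $\mult_{(\alpha|\alpha)}(A_t)=1$ as well, so by Schur's lemma the $G$-equivariant surjection $\psi\colon T_t\to A_t$ restricts to an isomorphism on this isotypic component, forcing $\mult_{(\alpha|\alpha)}(K_t)=0$.

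For the first claim, let $(\gamma|\lambda)$ be a $T$-shape relation in $(T_t)_d$. I would verify minimality in the two-sided ideal $K_t$ by checking that no copy of $(\gamma|\lambda)$ lies in $(T_t)_1\cdot K_{t,d-1}+K_{t,d-1}\cdot(T_t)_1$. By Pieri's formula as encoded in Proposition \ref{tensordec}, an irreducible of bi-shape $(\gamma|\lambda)$ can arise from $(T_t)_1\cdot Q$ (or $Q\cdot(T_t)_1$) only if $Q$ contains an irreducible whose bi-shape is a bi-predecessor of $(\gamma|\lambda)$. By the $T$-shape hypothesis every such bi-predecessor equals some $(\alpha|\alpha)$ with $\mult_{(\alpha|\alpha)}(T_t)=1$, and the preliminary observation gives $\mult_{(\alpha|\alpha)}(K_{t,d-1})=0$. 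Hence $K_{t,d-1}$ contains no bi-predecessor of $(\gamma|\lambda)$, so the entire $(\gamma|\lambda)$-isotypic component of $K_{t,d}$ consists of minimal generators.

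For the second claim, suppose that the $T$-shape relation $(\gamma|\lambda)$ appears in $(S_t)_\mu$ for some $\mu\vdash d$. Asymmetry is built into the notion of $T$-shape and occurrence in $(S_t)_\mu$ is the hypothesis; the only remaining condition in the definition of a shape relation is that every bi-predecessor $(\alpha|\beta)$ of $(\gamma|\lambda)$ appearing in $(S_t)_{\mu'}$ for some $1$-predecessor $\mu'$ of $\mu$ is symmetric of single $\Bw^t$-type. Any such $(\alpha|\beta)$ has $\mult_{(\alpha|\beta)}(S_t)\ge 1$, hence $\mult_{(\alpha|\beta)}(T_t)\ge 1$ via $\phi$, so it counts as a bi-predecessor in $T_t$; the $T$-shape assumption then forces $(\alpha|\beta)=(\alpha|\alpha)$ with $\mult_{(\alpha|\alpha)}(T_t)=1$. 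The chain $1=\mult_{(\alpha|\alpha)}(A_t)\le \mult_{(\alpha|\alpha)}(S_t)\le \mult_{(\alpha|\alpha)}(T_t)=1$ then forces $\mult_{(\alpha|\alpha)}(S_t)=1$, which by the characterization given immediately before Proposition \ref{single_char} means exactly that $\alpha$ is of single $\Bw^t$-type.

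The final assertion follows from the first two together with Lemma \ref{redtotensor}: in degree $d\ge 3$, a $T$-shape relation is minimal in $K_t$ by the first claim, hence minimal in $J_t$ by the lemma, and in particular its image under $\phi$ is a nonzero minimal generator of $J_t$, so $(\gamma|\lambda)$ does appear in $S_t$ and the second claim promotes it to a shape relation. The only delicate point throughout is the bookkeeping of multiplicities in $T_t$, $S_t$, and $A_t$; on symmetric bi-shapes having multiplicity $1$ in $T_t$ all three coincide, and this is exactly what makes the Pieri-based counting go through.
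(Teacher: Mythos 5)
Your proposal is correct and takes essentially the same route as the paper's (very compressed) proof: the bi-predecessors, being symmetric of multiplicity $1$ in $T_t$, cannot occur in $K_t$ by comparison with $A_t$, which together with Pieri's formula gives minimality in $K_t$; the multiplicity chain $A_t\le S_t\le T_t$ gives single $\Bw^t$-type for the second claim; and Lemma \ref{redtotensor} yields the degree $\ge 3$ statement. You have simply made explicit the steps the paper summarizes as ``the same (and even simpler) arguments as for shape relations.''
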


\begin{proof}
The first statement follows by the same (and even simpler)
arguments as for shape relations. The second is obvious, and
for the third we apply Lemma \ref{redtotensor}.
\end{proof}

We will classify the $T$-shape relations in Subsection
\ref{T-shape_rel}. However, not all shape relations are
$T$-shape relations, as will become apparent in Subection
\ref{secoddrelations}.

\section{Quadratic and cubic relations}\label{QuadCub}

In order to write down explicit polynomials representing the relations (and not just shapes or
tableaux) we must introduce some notation. Let $A\subseteq \NN$ be a
set of cardinality $N<\infty$. Let us write $A=\{a_1,\dots,a_N\}$ in
ascending order. Let $A_1,\dots,A_k$ be a $k$-partition of $A$: that
is, $A_1\cup \dots \cup A_k=A$ and $A_i\cap A_j=\emptyset$ for all
$i\neq j$. Set $r_i=|A_i|$ and let us write
$A_i=\{a_{i,1},\dots,a_{i,r_i}\}$ in ascending order. With the
symbol
\[
(-1)^{A_1,\dots,A_k}
\]
we mean the sign of the unique permutation of $A$ taking the
sequence $a_1,\dots,a_N$ to the sequence
$a_{1,1},\dots,a_{1,r_1},a_{2,1},\dots,a_{2,r_2},\dots,a_{k,1},\dots,a_{k,r_k}$.
If some $A_i$ consists of one element, so that
$A_i=\{a_{i,1}\}$, we may simply write this sign as
$(-1)^{A_1,\dots,A_{i-1},a_{i,1},A_{i+1},\dots,A_k}$. Given
another finite set $B$, we will say that $A$ is
lexicographically smaller than $B$ if $|A|<|B|$ or $|A|=|B|$
and the vector $(a_1,\dots,a_N)$ is lexicographically smaller
than $(b_1,\dots,b_N)$ with $b_i\in B$ taken in ascending
order. With $e_A$ we mean $e_{a_1}\wedge e_{a_2}\wedge \dots
\wedge e_{a_N}$. Similarly for $e_A^*$, $f_A$ and $f_A^*$.
Eventually, if $B_i=\{b_{i,1},\dots,b_{i,s_i}\}\subseteq \NN$,
with the $b_{i,j}$'s taken in ascending order, are disjoint
subsets for $i=1,\dots,h$ such that $s_1+\dots +s_h=N$, we
define the $N$-minor
\[[
A_1,\dots,A_k|B_1,\dots,B_h]=[a_{1,1},\dots,a_{1,r_1}, \dots,a_{k,1},
\dots,a_{k,r_k}|b_{1,1},\dots,b_{1,s_1}, \dots,b_{h,1},\dots,b_{h,s_h}]
\]

In order to keep the notation transparent, we set
\[
E=\Bw^tV\qquad\text{and}\qquad  F=\Bw^tW
\]
as in Subsection \ref{Estructure}.

\subsection{Quadratic relations}\label{subdeg2}

The only degree $2$ (minimal) relations between $2$-minors of
an $m\times n$-matrix are Pl\"ucker relations, as we will see.
However this is not true anymore for $t$-minors with $t\geq 3$.
In this subsection we want to describe all the degree $2$
relations between $t$-minors. In order to do this we need a
decomposition of
\[
\Sym^2(E\tensor F^*)
\]
into irreducible $G$-modules. Since
\[\
\Tensor^2E=\Sym^2E\oplus \Bw^2E,
\]
one can show (or \eqref{CauchyH} implies) that:
\[
\Sym^2(E\otimes F^*)=
\Bl(\Sym^2E\otimes \Sym^2F^*\Br) \bigoplus
\Bl(\Bw^2E\otimes \Bw^2F^*\Br).
\]
By Pieri's formula, we know that $\displaystyle
\Tensor^2E\cong \bigoplus_{u=0}^t L_{\tau_u}V$, where
\begin{equation}\label{ker2}
\tau_u=(t+u,t-u).
\end{equation}
So the matter is just to decide whether $L_{\tau_u}V$ is in
$\Sym^2E$ or in $\Bw^2E$:

\begin{lemma}\label{decs2at}
If $\dim_{\kk}V\geq 2t$, for $u\in \{0,\dots t\}$, we have:
\[L_{\tau_u}V\subseteq \Sym^2E \iff u \mbox{ is even}.\]
\end{lemma}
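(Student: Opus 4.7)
The idea is to show that the flip $\tau:E\tensor E\to E\tensor E$ (swap of the two tensor factors) acts on the isotypic component $L_{\tau_u}V$ by the scalar $(-1)^u$; since $\Sym^2 E$ and $\Bw^2 E$ are the $+1$ and $-1$ eigenspaces of $\tau$, the conclusion follows. Pieri's formula \eqref{Pieri} gives the multiplicity-free decomposition $E\tensor E\cong \Dirsum_{u=0}^{t} L_{\tau_u}V$, and all summands are nonzero since $\dim V\ge 2t\ge t+u$. Because $\tau$ is $\GL(V)$-equivariant and $\tau^2=1$, Schur's lemma shows that $\tau$ acts as a scalar $\epsilon_u\in\{\pm 1\}$ on each summand; the claim reduces to $\epsilon_u=(-1)^u$.

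To pin down $\epsilon_u$, I would construct an explicit highest weight vector $v_u\in L_{\tau_u}V\subseteq E\tensor E$ by specializing Lemma \ref{lemmaformula} (followed by the reasoning of Remark \ref{remformula}) to the situation $\lambda=(t)$ and $\gamma=\tau_u=(t+u,t-u)$, so that $\gamma$ arises from $\lambda$ by adding the $u$ boxes $(1,t+1),\dots,(1,t+u)$ in row $1$ and the $t-u$ boxes $(2,1),\dots,(2,t-u)$ in row $2$. With $\Lambda$ the tableau of shape $(t)$ whose boxes are filled by $1,\dots,t$ in order, the Young symmetrizer $\YY_\Lambda$ is full antisymmetrization, so applying the wedging projection to the element produced by the lemma yields, up to a positive constant,
\begin{equation*}
v_u=\sum_{\pi\in\Sigma_{t+u}}(-1)^\pi\bigl(e_{\pi(1)}\wedge\cdots\wedge e_{\pi(t)}\bigr)\tensor\bigl(e_{\pi(1)}\wedge\cdots\wedge e_{\pi(t-u)}\wedge e_{\pi(t+1)}\wedge\cdots\wedge e_{\pi(t+u)}\bigr);
\end{equation*}
the hypothesis $\dim V\ge 2t$ ensures that the indices $1,\dots,t+u$ all give distinct $e_i$'s, and the non-vanishing assertion in Lemma \ref{lemmaformula} guarantees $v_u\neq 0$.

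The remaining (and main computational) step is to check that $\tau(v_u)=(-1)^u v_u$. Applying $\tau$ interchanges the two tensor factors in each summand, and I would reindex the resulting sum by setting $\pi'=\pi\circ\rho$, where $\rho\in\Sigma_{t+u}$ is the product of the $u$ disjoint transpositions $(t-u+k,\,t+k)$ for $k=1,\dots,u$, an involution which exchanges the two blocks of indices of size $u$ while fixing the block $\{1,\dots,t-u\}$. Under this substitution every summand of $\tau(v_u)$ is carried precisely onto the corresponding summand of $v_u$; since $\operatorname{sgn}(\rho)=(-1)^u$, the global sign changes by $(-1)^u$, proving $\tau(v_u)=(-1)^u v_u$ and hence $\epsilon_u=(-1)^u$. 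The only real obstacle is purely bookkeeping: verifying carefully that $\pi\mapsto\pi\rho$ matches factors of $v_u$ term by term, but the explicit form of $v_u$ above is designed precisely to make this correspondence transparent.
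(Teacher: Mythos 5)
Your overall framing (Pieri gives a multiplicity-free decomposition of $E\tensor E$, the flip is $\GL(V)$-equivariant, so by Schur's lemma it acts by $\pm1$ on each $L_{\tau_u}V$, and one computes the sign on an explicit highest weight vector) is exactly the spirit of the paper's proof. The gap is in the explicit vector: you have mis-specialized Lemma \ref{lemmaformula}. In the tableau $T_{\lambda,\pi,\gamma}$ the entries in columns $1,\dots,t-u$ of the row of $\lambda=(t)$ are \emph{not} permuted (compare Example \ref{exTableu}: only the entries lying beyond the boxes added to the next row get replaced by $\pi(j)$), so the correct first factor is $e_1\wedge\cdots\wedge e_{t-u}\wedge e_{\pi(t-u+1)}\wedge\cdots\wedge e_{\pi(t)}$, not $e_{\pi(1)}\wedge\cdots\wedge e_{\pi(t)}$. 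With your version two things go wrong. First, the summands have $\pi$-dependent weights, so $v_u$ is not a weight vector at all, and the nonvanishing statement of Lemma \ref{lemmaformula} does not apply to it. Second, and fatally, whenever $t-u\ge 2$ pair the permutation $\pi$ with $\pi\cdot(1\,2)$: positions $1,2$ lie in the block common to both factors, so the two wedge factors each change sign while $(-1)^{\pi}$ also changes sign, hence the two summands cancel and $v_u=0$ identically. Thus your element vanishes for every $u\le t-2$ (in particular for $u=0$), the identity $\tau(v_u)=(-1)^u v_u$ reads $0=0$ there, and nothing is proved about the placement of $L_{\tau_u}V$; even in the surviving cases $u=t-1$ (where $v_u\ne0$ but is not a weight vector) you would still need to know its projection to the $L_{\tau_u}V$-isotypic component is nonzero.

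The repair is essentially what the paper does: write down the balanced vector
\[
\sum_{{I\cup J=\{t-u+1,\dots,t+u\}}\atop{|I|=|J|=u}}(-1)^{I,J}\,
(e_1\wedge\cdots\wedge e_{t-u}\wedge e_I)\tensor(e_1\wedge\cdots\wedge e_{t-u}\wedge e_J),
\]
check directly that it is a nonzero $\UU_-(V)$-invariant of weight $\tl\tau_u$ (no appeal to Lemma \ref{lemmaformula} is needed), and read off the flip eigenvalue from $(-1)^{I,J}=(-1)^u(-1)^{J,I}$. Note also that if you instead use the correctly specialized output of Lemma \ref{lemmaformula}, the unpermuted block $e_1\wedge\cdots\wedge e_{t-u}$ sits only in the first factor, so your reindexing $\pi\mapsto\pi\rho$ no longer matches $\tau(v_u)$ with $v_u$ term by term; the symmetric form above is what makes the sign computation transparent.
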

\begin{proof}
It is straightforward to check that the element
\[
\sum_{{{I\cup J = \{t-u+1,\dots,t+u\}}\atop{|I|=|J|=u}}}(-1)^{I,J}(e_1\wedge e_2 \wedge \dots
\wedge e_{t-u}\wedge e_I)\tensor(e_1\wedge e_2 \wedge \dots \wedge e_{t-u}\wedge e_J)
\]
is a nonzero $\UU_-(V)$-invariant. Therefore it is a highest weight
vector of weight $\tl\tau_u = (2^{t-u},1^{2u})$. Thus it
generates the irreducible $\GL(V)$-module $L_{\tau_u}V$.
Furthermore, it is clear that $(-1)^{I,J}=(-1)^u(-1)^{J,I}$, so
the claim follows.
\end{proof}

The same discussion holds for $W^*$, so Lemma \ref{decs2at} yields the desired decomposition:
\[
\Sym^2\Bl(E\tensor F^*\Br)\cong
\bigoplus_{{u,v\in\{0,\ldots ,t\}}\atop{u+v \text{ even}}}L_{\tau_u}V\tensor
L_{\tau_v}W^*.
\]
Since the above decomposition is multiplicity free, exactly the
asymmetric shapes belong to $(J_t)_2$:
%\discuss{Changed to better reasoning}
\[
(J_t)_2\cong\bigoplus_{{u,v\in\{0,\ldots ,t\}}\atop{{u+v \text{ even}}\atop{u\neq v}}}
L_{\tau_u}V\tensor L_{\tau_v}W^*.
\]
So, the highest bi-weight vector of the bi-diagram
$(\tau_u|\tau_v)$, with $u+v$ even and $u\neq v$, is the
following element:
\begin{equation}\label{expldeg2}
\ff_{u,v}=\sum_{{I,J}\atop{H,K}}(-1)^{I,J}(-1)^{H,K}[1,\dots,t-u,I|1,\dots,t-v,H]
[1, \dots,t-u,J|1,\dots,t-v,K]
\end{equation}
where the sum runs over the $2$-partitions $I,J$ of
$\{t-u+1,\dots,t+u\}$ and  $H,K$ of $\{t-v+1,\dots,t+v\}$ such that
$|I|=|J|=u$ and $|H|=|K|=v$. Furthermore one can assume that $I$ is
lexicographically smaller than $J$, so that the relation is the
original one divided by $2$. When we need to emphasize the size of
minors, we will write $\ff_{u,v}^\st$.

\begin{remark}\label{remnotonlypluecker}
Notice that $\ff_{u,v}$ is a Pl\"ucker relation if and only if $u=0$
or $v=0$. Moreover, if $t>\max\{u,v\}$, then $\ff_{u,v}^\st$ is
obtained by trivial extension from $\ff_{u,v}^\su$ or
$\ff_{u,v}^\sv$, according to whether $u>v$ or $v>u$ (Proposition
\ref{propretract}).
\end{remark}

\subsection{Cubic shape relations}\label{sec_cubic_sh}

We will determine relations of degree $3$ that are minimal
generators of $J_t$. We will see that they are shape relations,
and in Subsection \ref{nomoredeg3sh} it will be shown that
there are no other shape relations in degree $3$.

A minimal relation between $t$-minors is said to be
\emph{really new} if it does not come from a relation
between $(t-1)$-minors by trivial extension. Every time that
$t$ increases by one a really new type of minimal cubic
relation shows up (provided that $m\geq \lceil t/2 \rceil$ and
$n\geq 2t$).   Such really new cubic minimal relations exist
for slightly different reasons according to whether $t$ is even
or odd, therefore we will divide this subsection in two parts.

\subsubsection{Even minimal cubics}\label{evencubics}

Despite of the title, in this first part we will construct
minimal cubic relations between $t$-minors for any $t$ (also
for odd $t$). However, they will be really new only if $t$ is
even. To this purpose we define some special bi-diagrams
$(\gamma_u |\lambda_u)$ for any $\displaystyle u=1,\dots,
\lfloor t/2 \rfloor$, for which both $\gamma_u$ and $\lambda_u$
are partitions of $3t$. In Theorem \ref{shapeink3} we will
prove that some of these bi-diagrams (actually each of them if
the size of the matrix is big enough) are minimal irreducible
representations of degree $3$ in $J_t$.

For all $u=1,\dots,\lfloor t/2\rfloor$, we define the bi-diagram
$(\gamma_u|\lambda_u)$ ($=(\gamma_u^\st|\lambda_u^t)$ if we need to
emphasize the size of the minors) by
\begin{equation}
\begin{aligned}
 \gamma_u & =(t+u, \ t+u, \ t-2u), \\
\lambda_u  & =(t+2u, \ t-u, \ t-u).
\end{aligned}\label{kerdiage}
\end{equation}
Notice that $\gamma_u$ and $\lambda_u$ are both partitions of
$3t$. Furthermore, provided that $m\geq t+u$ and $n\geq t+2u$,
the irreducible $G$-representation $L_{\gamma_u}V\tensor
L_{\lambda_u}W^*$ occurs in $(T_t)_3$.

\begin{remark}
Notice that, if $t$ is odd, the bi-diagram
$(\gamma_u^\st|\lambda_u^t)$ is a trivial extension of
$(\gamma_u^{\,t-1}|\lambda_u^{t-1})$ by Proposition
\ref{propretract}. Therefore $(\gamma_u^\st|\lambda_u^t)$ is
really new if and only if $t$ is even and $u=t/2$.
\end{remark}

\begin{thm}\label{shapeink3}
The bi-diagram $(\gamma_u|\lambda_u)$ is a $T$-shape relation
of degree $3$ and therefore a minimal irreducible
representation of $J_t(m,n)$ (provided that $u\leq m-t$ and
$2u\leq n-t$).
\end{thm}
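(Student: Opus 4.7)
The strategy is to verify the definition of $T$-shape relation directly and then invoke Proposition \ref{Tshape}. Concretely, I need to show (i) $\gamma_u\neq\lambda_u$, (ii) every bi-predecessor of $(\gamma_u|\lambda_u)$ is symmetric, and (iii) each such symmetric bi-predecessor has multiplicity $1$ in $T_t$. Part (i) is immediate since $\gamma_u$ and $\lambda_u$ already differ in the first part ($t+u$ vs.\ $t+2u$) whenever $u\geq 1$.

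For (ii), I would enumerate the $t$-predecessors of $\gamma_u$ and of $\lambda_u$ separately, using the inequalities in the definition. A $(t,2)$-admissible $\alpha$ with $\alpha\vdash 2t$ that is a predecessor of $\gamma_u=(t+u,t+u,t-2u)$ must satisfy $\alpha_1\leq t+u$ from the first box-condition and $\alpha_1\geq\gamma_2=t+u$ from the second, forcing $\alpha_1=t+u$ and hence $\alpha_2=t-u$; the remaining inequality $\alpha_2\geq\gamma_3=t-2u$ is automatic. Similarly, a predecessor $\beta\vdash 2t$ of $\lambda_u=(t+2u,t-u,t-u)$ must have $\beta_2=t-u$ (both from $\beta_2\leq\lambda_2$ and $\beta_2\geq\lambda_3$), which forces $\beta_1=t+u$, and the remaining conditions $2u\leq t+u\leq t+2u$ hold trivially. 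Thus both $\gamma_u$ and $\lambda_u$ admit the unique predecessor $(t+u,t-u)$, so the unique bi-predecessor of $(\gamma_u|\lambda_u)$ is the symmetric bi-diagram $((t+u,t-u)\mid(t+u,t-u))$.

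For (iii), I apply the recursive formula of Proposition \ref{tensordec} to $n((t+u,t-u),(t+u,t-u))$. A $(t,1)$-admissible diagram is forced to be $(t)$, and the check that $(t)$ is indeed a predecessor of $(t+u,t-u)$ reduces to the trivial inequalities $t\leq t+u\leq 2t$ and $0\leq t-u\leq t$ (using $0\leq u\leq \lfloor t/2\rfloor\leq t$). Hence the only bi-predecessor at the next lower level is $((t)\mid(t))$, and the multiplicity collapses to $n(t,t)=1$ by the base case.

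Combining (i)--(iii), $(\gamma_u|\lambda_u)$ is a $T$-shape relation in degree $3$. Since $d=3\geq 3$, Proposition \ref{Tshape} (together with Lemma \ref{redtotensor}) upgrades this to a shape relation, and in particular $(\gamma_u|\lambda_u)$ is a minimal irreducible subrepresentation of $J_t(m,n)$, the hypotheses $m\geq t+u$ and $n\geq t+2u$ being exactly what is needed for both $L_{\gamma_u}V$ and $L_{\lambda_u}W^*$ to be nonzero. The only real work is the predecessor bookkeeping in (ii); there is no subtle obstacle, because the rigidity of the two outer parts $t+u$ and $t-u$ in $\gamma_u$, respectively $t+2u$ and $t-u$ in $\lambda_u$, pins down $\alpha$ and $\beta$ immediately.
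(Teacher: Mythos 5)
Your proof is correct and follows essentially the same route as the paper: identify $((t+u,t-u)\mid(t+u,t-u))$ as the unique bi-predecessor, note it has multiplicity $1$ because it has degree $2$, and conclude via Proposition \ref{Tshape}. The only difference is that you spell out the predecessor bookkeeping that the paper leaves implicit.
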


\begin{proof}
The only bi-predecessor of $(\gamma_u|\lambda_u)$ is the
bi-diagram $(\tau|\tau)$ with $\tau=(t+u,t-u)$. Since $\tau$
has degree $2$, it has multiplicity $1$. This shows that
$(\gamma_u|\lambda_u)$ is a $T$-shape relation, and we can
apply Proposition \ref{Tshape}
\end{proof}

\begin{corollary}\label{cubics}
The ideal $J_t$ has some minimal generators of degree $3$ apart
from the cases discussed in Remark \ref{partcases}.
\end{corollary}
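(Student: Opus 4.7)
The plan is a routine case-check that simply reduces the corollary to Theorem \ref{shapeink3}. Under the standing convention $m\le n$, the cases excluded by Remark \ref{partcases} are exactly $t=1$, $n\le t+1$, and $t=m$. So outside those cases we have
\[
t\ge 2,\qquad m\ge t+1,\qquad n\ge t+2.
\]
I would then choose $u=1$, which is an admissible index in the family \eqref{kerdiage}: the condition $1\le \lfloor t/2\rfloor$ is guaranteed by $t\ge 2$, the dimension requirement $u\le m-t$ by $m\ge t+1$, and $2u\le n-t$ by $n\ge t+2$.

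With these inequalities in hand, Theorem \ref{shapeink3} applies to $u=1$, and therefore the bi-diagram
\[
(\gamma_1\,|\,\lambda_1)=\bigl((t+1,\,t+1,\,t-2)\,\big|\,(t+2,\,t-1,\,t-1)\bigr)
\]
is a $T$-shape relation in $(K_t)_3$, hence a minimal irreducible $G$-subrepresentation of $(J_t)_3$ (the passage from $T_t$ to $S_t$ in degree $3$ being provided by Lemma \ref{redtotensor}, which underlies Proposition \ref{Tshape}). In particular $J_t$ has minimal generators of degree $3$ in every case not covered by Remark \ref{partcases}.

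There is no genuine obstacle here beyond the bookkeeping of the case analysis; all the real work was done in Theorem \ref{shapeink3}, which has already identified an explicit degree-$3$ minimal relation for every admissible $u$. The corollary is simply the observation that the smallest choice $u=1$ already fits under the mildest non-triviality hypotheses on $m$, $n$, and $t$.
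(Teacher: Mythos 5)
Your argument is correct and coincides with the paper's own proof: the paper also observes that outside the cases of Remark \ref{partcases} one has $t\ge 2$, $m\ge t+1$, $n\ge t+2$, so the bi-diagram $(\gamma_1|\lambda_1)$ satisfies the side conditions of Theorem \ref{shapeink3}, giving a minimal cubic relation. Your write-up merely makes the bookkeeping explicit.
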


\begin{proof}
In this situation the bi-diagram $(\gamma_1|\lambda_1)$ always
satisfies the side condition of Theorem \ref{shapeink3}.
\end{proof}

\subsubsection{Odd minimal cubics}\label{secoddrelations}

Once again despite of the title, in this second part we will
construct other minimal cubic relations between $t$-minors for
any $t$ (also for even $t$). However, they will be really new
only if $t$ is odd. Here the proof is more tricky than the one
for the even cubics since the odd ones are not $T$-shape
relations.

For all $u=2,\dots,\lceil t/2\rceil$, we define the bi-diagram
$=(\rho_{u}|\sigma_{u})$ ($(\rho_{u}^t|\sigma_{u}^t)$ if we
want to emphasize the size of the minors) by
\begin{equation}
\begin{aligned}
\rho_{u} & =(t+u, \ t+u-1, \ t-2u+1), \\
\sigma_{u}  & =(t+2u-1, \ t-u+1, \ t-u).
\end{aligned}\label{kerodd}
\end{equation}
Notice that both $\rho_{u}$ and $\sigma_{u}$ are partitions of $3t$.

\begin{remark}
If $t$ is even, the bi-diagram $(\rho_u^t|\sigma_u^\st)$  is a
trivial extension of $(\rho_u^{t-1}|\sigma_u^{\,t-1})$ by
Proposition \ref{propretract}. So minimal relations we are
going to describe now are really new only if $t$ is odd and
$u=\lceil t/2\rceil$.
\end{remark}

\begin{thm}\label{shapeink3odd}
The bi-diagram $(\rho_u|\sigma_u)$ is a shape relation (of
single $\Bw^t$-type) and therefore a minimal irreducible
representation of $J_t(m,n)$ of degree $3$ (provided that
$u\leq m-t$ and $2u\leq n-t+1$).
\end{thm}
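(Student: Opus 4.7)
The plan is to apply Theorem \ref{GandH}(iii) with $\mu=(2,1)\vdash 3$. Asymmetry of $(\rho_u|\sigma_u)$ is immediate, since the first parts $t+u$ and $t+2u-1$ differ by $u-1\ge 1$ as $u\ge 2$.

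Next I would compute the $t$-predecessors of $\rho_u$ and of $\sigma_u$ by a direct case analysis of the Pieri inequalities together with $(t,2)$-admissibility. The conditions $\alpha_1\le \lambda_1\le \alpha_1+t$, $\alpha_2\le\lambda_2\le\alpha_1$, $\lambda_3\le\alpha_2$, and $\alpha_1+\alpha_2=2t$ allow at most two solutions in each case, and the check shows that $\rho_u$ and $\sigma_u$ admit exactly the same pair of predecessors,
\[
\tau:=\tau_u=(t+u,\,t-u),\qquad \tau':=\tau_{u-1}=(t+u-1,\,t-u+1),
\]
in the notation of \eqref{ker2}.

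Then I would locate $\tau$ and $\tau'$ within the degree-$2$ plethysms. By Lemma \ref{decs2at}, $\tau_u$ belongs to $\Sym^2E=L_{(2)}E$ if and only if $u$ is even, so $\tau$ and $\tau'$ have opposite parities and sit in different $H$-components: exactly one lies in $L_{(2)}E$ and the other in $L_{(1,1)}E$, each with multiplicity one. Hence $\tau$ and $\tau'$ are of single $\Bw^t$-type, with types $(2)$ and $(1,1)$ (in some order). Since $(2)$ and $(1,1)$ are precisely the $1$-predecessors of $(2,1)$, Proposition \ref{single_char} yields that both $\rho_u$ and $\sigma_u$ are of single $\Bw^t$-type $(2,1)$; in particular $(\rho_u|\sigma_u)$ occurs only in $(S_t)_{(2,1)}$, with multiplicity $1$.

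It remains to track the four a priori bi-predecessors of $(\rho_u|\sigma_u)$. By the Cauchy decomposition \eqref{CauchyH}, a bi-shape $(\alpha|\beta)$ occurs in $S_t$ only when $L_\alpha V$ and $L_\beta W^*$ both sit inside the same $L_{\mu'}$-summand. Because $\tau$ and $\tau'$ land in disjoint $L_{\mu'}E$'s, the mixed bi-predecessors $(\tau|\tau')$ and $(\tau'|\tau)$ do not appear in $S_t$ at all; the only bi-predecessors surviving in $S_t$ are the two symmetric ones $(\tau|\tau)\in (S_t)_{\mu_1}$ and $(\tau'|\tau')\in (S_t)_{\mu_2}$ with $\{\mu_1,\mu_2\}=\{(2),(1,1)\}$, both of single $\Bw^t$-type. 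Applying Theorem \ref{GandH}(iii) then concludes that $(\rho_u|\sigma_u)$ is a shape relation and hence a minimal irreducible representation in $J_t$. The main obstacle, compared to the even case of Theorem \ref{shapeink3}, is that here $\rho_u$ has two $t$-predecessors rather than one, so the $T$-shape argument via Proposition \ref{Tshape} is unavailable: the mixed bi-predecessors would survive in $T_t$ and would be asymmetric. The crucial point that rescues the situation is the parity mismatch between $\tau_u$ and $\tau_{u-1}$ coming from Lemma \ref{decs2at}, which pushes the two predecessors into disjoint $H$-components of $S_t$ and eliminates the dangerous asymmetric bi-predecessors at the $S_t$ level.
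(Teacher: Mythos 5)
Your overall route is the paper's own: you find the same two predecessors $\tau_u=(t+u,t-u)$ and $\tau_{u-1}=(t+u-1,t-u+1)$ shared by $\rho_u$ and $\sigma_u$, you use Lemma \ref{decs2at} to note the parity mismatch that kills the mixed bi-predecessors $(\tau_u|\tau_{u-1})$, $(\tau_{u-1}|\tau_u)$ at the level of $S_t$, and you conclude via Theorem \ref{GandH}(iii). However, there is a genuine gap at the step where you claim that Proposition \ref{single_char} ``yields'' that $\rho_u$ and $\sigma_u$ are of single $\Bw^t$-type $(2,1)$, and hence that $(\rho_u|\sigma_u)$ occurs (only) in $(S_t)_{(2,1)}$. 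Proposition \ref{single_char} is an equivalence only under its standing hypothesis that $\lambda$ occurs in $L_\mu E$, and you never verify that $\rho_u$ occurs in $L_{(2,1)}E$ (resp.\ $\sigma_u$ in $L_{(2,1)}F^*$). The combinatorial data you use cannot decide this: from the predecessor structure one only gets $\mult_{\rho_u}(\Sym^3E)+\mult_{\rho_u}(L_{(2,1)}E)=1$ and $\mult_{\rho_u}(\Bw^3E)+\mult_{\rho_u}(L_{(2,1)}E)=1$, which are equally consistent with $\rho_u$ occurring once in $L_{(2,1)}E$ and with $\rho_u$ occurring once in $\Sym^3E$ and once in $\Bw^3E$ but not at all in $L_{(2,1)}E$; condition (iii) of Proposition \ref{single_char} holds verbatim in both scenarios, so it cannot distinguish them. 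This is exactly the non-formal step the paper supplies with Proposition \ref{propinthemiddle}: using Lemma \ref{lemmaformula} and Remark \ref{remformula} one writes down an explicit $\UU_-(V)$-invariant of the right weight inside $L_{\tau}V\tensor E$ and checks by hand that it is neither symmetric nor alternating, which is what forces a copy of $L_{\rho_u}V$ into $L_{(2,1)}E$. Without this (or an equivalent plethysm fact), your localization of $(\rho_u|\sigma_u)$ in $(S_t)_{(2,1)}$ with multiplicity $1$ is unproved.

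One could partially rescue minimality even in the ``bad'' scenario, since then $(\rho_u|\sigma_u)$ would occur in $(S_t)_{(1,1,1)}$ or $(S_t)_{(3)}$ and its unique bi-predecessor in the relevant $1$-predecessor component would still be the symmetric one of matching parity, so Theorem \ref{GandH}(iii) would again apply; but the assertions that $(\rho_u|\sigma_u)$ is of single $\Bw^t$-type and lies in $(S_t)_{(2,1)}$ --- which are part of the statement and which you use to get multiplicity $1$ --- would remain unestablished. The clean fix is to invoke (or prove) Proposition \ref{propinthemiddle} before any appeal to Proposition \ref{single_char}, as the paper does. A minor point in the same passage: in the paper's (Weyman) convention $\Sym^2E=L_{(1,1)}E$ and $\Bw^2E=L_{(2)}E$, the reverse of what you wrote; this is harmless for your argument since you only need ``one in each, in some order,'' but it should be corrected.
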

\begin{proof}
Notice that $\rho_u$ has two predecessors, namely $(t+u,t-u)$
and $(t+u-1,t-u+1)$. Also $\sigma_u$ has two predecessors,
namely $(t+u,t-u)$ and $(t+u-1,t-u+1)$. Therefore Proposition
\ref{propinthemiddle} implies that
\[
L_{\rho_u}V\subseteq L_{(2,1)}E
\]
and
\[
L_{\sigma_u}W^*\subseteq L_{(2,1)}F^*.
\]
So, exploiting \eqref{CauchyH}, we get that $(\rho_u|\sigma_u)$
is a $G$-subrepresentation of $S_t(m,n)_{(2,1)}$.
Moreover, Lemma
\ref{decs2at} implies that the only two pairs (of the
predecessors of $\rho_u$ and $\sigma_u$) living in $S_t(m,n)$
are $((t+u,t-u)|(t+u,t-u))$ and
$((t+u-1,t+u-1)|(t+u-1,t-u+1))$. Both of these are symmetric
bi-diagrams in degree $2$, and it follows that
$(\rho_u|\sigma_u)$ is a shape relation.
\end{proof}

Since $(\rho_u|\sigma_u)$ has an asymmetric bi-predecessor in
$T_t$ it is not a $T$-shape relation.

\subsection{A second look at the minimal relations}

The goal of this subsection is to augment the information on the
minimal relations we found in this section. In Figure \ref{bidi2}
below we will feature the bi-shapes $(\tau_u|\tau_v)$ corresponding
to quadratic minimal relations when $u+v$ is even and $u<v$. Of
course, one has to keep in mind that there are also the quadratic
minimal relations corresponding to the mirrored bi-shapes, namely
$(\tau_u|\tau_v)$ for $u>v$.

\begin{figure}[htb]
{\setlength{\unitlength}{1.7mm}
\begin{picture}(70,60)(-4,-15)

\put(-10,38.5){\line(0,-1){55}} \put(-2,38.5){\line(0,-1){55}}
\put(10,38.5){\line(0,-1){55}} \put(26,38.5){\line(0,-1){55}}
\put(46,38.5){\line(0,-1){55}} \put(70,38.5){\line(0,-1){55}}

\put(-16,35){\line(1,0){93}} \put(-16,30){\line(1,0){93}}
\put(-16,25){\line(1,0){93}} \put(-16,20){\line(1,0){93}}
\put(-16,15){\line(1,0){93}} \put(-16,10){\line(1,0){93}}
\put(-16,5){\line(1,0){93}}\put(-16,0){\line(1,0){93}}
\put(-16,-5){\line(1,0){93}}\put(-16,-10){\line(1,0){93}}
\put(-16,-15){\line(1,0){93}}

\put(-7.5,36.5){{\scriptsize $t=2$}} \put(2.3,36.5){{\scriptsize
$t=3$}} \put(16.3,36.5){{\scriptsize $t=4$}}
\put(34,36.5){{\scriptsize $t=5$}} \put(55.5,36.5){{\scriptsize
$t=6$}}

\put(-15.5,32){{\scriptsize $(\tau_0|\tau_2)$}}

\put(-15.5,27){{\scriptsize $(\tau_1|\tau_3)$}}

\put(-15.5,22){{\scriptsize $(\tau_0|\tau_4)$}}

\put(-15.5,17){{\scriptsize $(\tau_2|\tau_4)$}}

\put(-15.5,12){{\scriptsize $(\tau_1|\tau_5)$}}

\put(-15.5,7){{\scriptsize $(\tau_3|\tau_5)$}}

\put(-15.5,2){{\scriptsize $(\tau_0|\tau_6)$}}

\put(-15.5,-3){{\scriptsize $(\tau_2|\tau_6)$}}

\put(-15.5,-8){{\scriptsize $(\tau_4|\tau_6)$}}

\put(-13,-13.5){$\vdots$}

\put(73,36.5){$\dots$}

\put(73,32.5){$\dots$}

\put(73,27.5){$\dots$}

\put(73,22.5){$\dots$}

\put(73,17.5){$\dots$}

\put(73,12.5){$\dots$}

\put(73,7.5){$\dots$}

\put(73,2.5){$\dots$}

\put(73,-2.5){$\dots$}

\put(73,-7.5){$\dots$}

\put(73,-13.5){$\ddots$}

%%%%%%%%%2-02

\put(-9.5,33.5){\line(1,0){2}} \put(-9.5,32.5){\line(1,0){2}}
\put(-9.5,31.5){\line(1,0){2}} \put(-9.5,33.5){\line(0,-1){2}}
\put(-8.5,33.5){\line(0,-1){2}} \put(-7.5,33.5){\line(0,-1){2}}

\put(-7,34){\line(0,-1){3}}

\put(-6.5,33.5){\line(1,0){4}} \put(-6.5,32.5){\line(1,0){4}}
\put(-6.5,33.5){\line(0,-1){1}} \put(-5.5,33.5){\line(0,-1){1}}
\put(-4.5,33.5){\line(0,-1){1}} \put(-3.5,33.5){\line(0,-1){1}}
\put(-2.5,33.5){\line(0,-1){1}}

%%%%%%%%%3-02

\put(-0.5,33.5){\line(1,0){3}} \put(-0.5,32.5){\line(1,0){3}}
\put(-0.5,31.5){\line(1,0){3}} \put(-0.5,33.5){\line(0,-1){2}}
\put(0.5,33.5){\line(0,-1){2}} \put(1.5,33.5){\line(0,-1){2}}
\put(2.5,33.5){\line(0,-1){2}}

\put(3,34){\line(0,-1){3}}

\put(3.5,33.5){\line(1,0){5}} \put(3.5,32.5){\line(1,0){5}}
\put(3.5,31.5){\line(1,0){1}} \put(3.5,33.5){\line(0,-1){2}}
\put(4.5,33.5){\line(0,-1){2}} \put(5.5,33.5){\line(0,-1){1}}
\put(6.5,33.5){\line(0,-1){1}}\put(7.5,33.5){\line(0,-1){1}}
\put(8.5,33.5){\line(0,-1){1}}

%%%%%%%%%3-13

\put(-1.5,28.5){\line(1,0){4}} \put(-1.5,27.5){\line(1,0){4}}
\put(-1.5,26.5){\line(1,0){2}} \put(-1.5,28.5){\line(0,-1){2}}
\put(-0.5,28.5){\line(0,-1){2}} \put(0.5,28.5){\line(0,-1){2}}
\put(1.5,28.5){\line(0,-1){1}}\put(2.5,28.5){\line(0,-1){1}}

\put(3,29){\line(0,-1){3}}

\put(3.5,28.5){\line(1,0){6}} \put(3.5,27.5){\line(1,0){6}}
\put(3.5,28.5){\line(0,-1){1}}
\put(4.5,28.5){\line(0,-1){1}} \put(5.5,28.5){\line(0,-1){1}}
\put(6.5,28.5){\line(0,-1){1}}\put(7.5,28.5){\line(0,-1){1}}
\put(8.5,28.5){\line(0,-1){1}}\put(9.5,28.5){\line(0,-1){1}}

%%%%%%%%%4-02

\put(12.5,33.5){\line(1,0){4}} \put(12.5,32.5){\line(1,0){4}}
\put(12.5,31.5){\line(1,0){4}} \put(12.5,33.5){\line(0,-1){2}}
\put(13.5,33.5){\line(0,-1){2}} \put(14.5,33.5){\line(0,-1){2}}
\put(15.5,33.5){\line(0,-1){2}}\put(16.5,33.5){\line(0,-1){2}}

\put(17,34){\line(0,-1){3}}

\put(17.5,33.5){\line(1,0){6}}\put(17.5,32.5){\line(1,0){6}}
\put(17.5,31.5){\line(1,0){2}} \put(17.5,33.5){\line(0,-1){2}}
\put(18.5,33.5){\line(0,-1){2}} \put(19.5,33.5){\line(0,-1){2}}
\put(20.5,33.5){\line(0,-1){1}}\put(21.5,33.5){\line(0,-1){1}}
\put(22.5,33.5){\line(0,-1){1}}\put(23.5,33.5){\line(0,-1){1}}

%%%%%%%%%4-13

\put(11.5,28.5){\line(1,0){5}} \put(11.5,27.5){\line(1,0){5}}
\put(11.5,26.5){\line(1,0){3}} \put(11.5,28.5){\line(0,-1){2}}
\put(12.5,28.5){\line(0,-1){2}} \put(13.5,28.5){\line(0,-1){2}}
\put(14.5,28.5){\line(0,-1){2}}\put(15.5,28.5){\line(0,-1){1}}
\put(16.5,28.5){\line(0,-1){1}}\put(17.5,28.5){\line(0,-1){1}}

\put(17,29){\line(0,-1){3}}

\put(17.5,28.5){\line(1,0){7}} \put(17.5,27.5){\line(1,0){7}}
\put(17.5,26.5){\line(1,0){1}}
\put(17.5,28.5){\line(0,-1){2}}\put(18.5,28.5){\line(0,-1){2}}
\put(19.5,28.5){\line(0,-1){1}}
\put(20.5,28.5){\line(0,-1){1}} \put(21.5,28.5){\line(0,-1){1}}
\put(22.5,28.5){\line(0,-1){1}}\put(23.5,28.5){\line(0,-1){1}}
\put(24.5,28.5){\line(0,-1){1}}

%%%%%%%%%4-04

\put(11.5,23.5){\line(1,0){4}} \put(11.5,22.5){\line(1,0){4}}
\put(11.5,21.5){\line(1,0){4}} \put(11.5,23.5){\line(0,-1){2}}
\put(12.5,23.5){\line(0,-1){2}} \put(13.5,23.5){\line(0,-1){2}}
\put(14.5,23.5){\line(0,-1){2}}\put(15.5,23.5){\line(0,-1){2}}

\put(16,24){\line(0,-1){3}}

\put(16.5,23.5){\line(1,0){8}} \put(16.5,22.5){\line(1,0){8}}
\put(16.5,23.5){\line(0,-1){1}}\put(17.5,23.5){\line(0,-1){1}}
\put(18.5,23.5){\line(0,-1){1}}
\put(19.5,23.5){\line(0,-1){1}} \put(20.5,23.5){\line(0,-1){1}}
\put(21.5,23.5){\line(0,-1){1}}\put(22.5,23.5){\line(0,-1){1}}
\put(23.5,23.5){\line(0,-1){1}}\put(24.5,23.5){\line(0,-1){1}}

%%%%%%%%%4-24

\put(10.5,18.5){\line(1,0){6}} \put(10.5,17.5){\line(1,0){6}}
\put(10.5,16.5){\line(1,0){2}} \put(10.5,18.5){\line(0,-1){2}}
\put(11.5,18.5){\line(0,-1){2}} \put(12.5,18.5){\line(0,-1){2}}
\put(13.5,18.5){\line(0,-1){1}}\put(14.5,18.5){\line(0,-1){1}}
\put(15.5,18.5){\line(0,-1){1}}\put(16.5,18.5){\line(0,-1){1}}

\put(17,19){\line(0,-1){3}}

\put(17.5,18.5){\line(1,0){8}} \put(17.5,17.5){\line(1,0){8}}
\put(17.5,18.5){\line(0,-1){1}}\put(18.5,18.5){\line(0,-1){1}}
\put(19.5,18.5){\line(0,-1){1}}
\put(20.5,18.5){\line(0,-1){1}} \put(21.5,18.5){\line(0,-1){1}}
\put(22.5,18.5){\line(0,-1){1}}\put(23.5,18.5){\line(0,-1){1}}
\put(24.5,18.5){\line(0,-1){1}}\put(25.5,18.5){\line(0,-1){1}}

%%%%%%%%%5-02

\put(29.5,33.5){\line(1,0){5}}\put(29.5,32.5){\line(1,0){5}}
\put(29.5,31.5){\line(1,0){5}} \put(29.5,33.5){\line(0,-1){2}}
\put(30.5,33.5){\line(0,-1){2}} \put(31.5,33.5){\line(0,-1){2}}
\put(32.5,33.5){\line(0,-1){2}}\put(33.5,33.5){\line(0,-1){2}}
\put(34.5,33.5){\line(0,-1){2}}

\put(35,34){\line(0,-1){3}}

\put(35.5,33.5){\line(1,0){7}}\put(35.5,32.5){\line(1,0){7}}
\put(35.5,31.5){\line(1,0){3}} \put(35.5,33.5){\line(0,-1){2}}
\put(36.5,33.5){\line(0,-1){2}} \put(37.5,33.5){\line(0,-1){2}}
\put(38.5,33.5){\line(0,-1){2}}\put(39.5,33.5){\line(0,-1){1}}
\put(40.5,33.5){\line(0,-1){1}}\put(41.5,33.5){\line(0,-1){1}}
\put(42.5,33.5){\line(0,-1){1}}

%%%%%%%%%5-13

\put(28.5,28.5){\line(1,0){6}} \put(28.5,27.5){\line(1,0){6}}
\put(28.5,26.5){\line(1,0){4}} \put(28.5,28.5){\line(0,-1){2}}
\put(29.5,28.5){\line(0,-1){2}} \put(30.5,28.5){\line(0,-1){2}}
\put(31.5,28.5){\line(0,-1){2}}\put(32.5,28.5){\line(0,-1){2}}
\put(33.5,28.5){\line(0,-1){1}}\put(34.5,28.5){\line(0,-1){1}}

\put(35,29){\line(0,-1){3}}

\put(35.5,28.5){\line(1,0){8}} \put(35.5,27.5){\line(1,0){8}}
\put(35.5,26.5){\line(1,0){2}}
\put(35.5,28.5){\line(0,-1){2}}\put(36.5,28.5){\line(0,-1){2}}
\put(37.5,28.5){\line(0,-1){2}}
\put(38.5,28.5){\line(0,-1){1}} \put(39.5,28.5){\line(0,-1){1}}
\put(40.5,28.5){\line(0,-1){1}}\put(41.5,28.5){\line(0,-1){1}}
\put(42.5,28.5){\line(0,-1){1}}\put(43.5,28.5){\line(0,-1){1}}

%%%%%%%%%5-04

\put(28.5,23.5){\line(1,0){5}} \put(28.5,22.5){\line(1,0){5}}
\put(28.5,21.5){\line(1,0){5}} \put(28.5,23.5){\line(0,-1){2}}
\put(29.5,23.5){\line(0,-1){2}} \put(30.5,23.5){\line(0,-1){2}}
\put(31.5,23.5){\line(0,-1){2}}\put(32.5,23.5){\line(0,-1){2}}
\put(33.5,23.5){\line(0,-1){2}}

\put(34,24){\line(0,-1){3}}

\put(34.5,23.5){\line(1,0){9}} \put(34.5,22.5){\line(1,0){9}}
 \put(34.5,21.5){\line(1,0){1}}\put(35.5,23.5){\line(0,-1){2}}\put(36.5,23.5){\line(0,-1){1}}
\put(37.5,23.5){\line(0,-1){1}}
\put(38.5,23.5){\line(0,-1){1}} \put(39.5,23.5){\line(0,-1){1}}
\put(40.5,23.5){\line(0,-1){1}}\put(41.5,23.5){\line(0,-1){1}}
\put(42.5,23.5){\line(0,-1){1}}\put(43.5,23.5){\line(0,-1){1}}
\put(34.5,23.5){\line(0,-1){2}}

%%%%%%%%%5-24

\put(27.5,18.5){\line(1,0){7}} \put(27.5,17.5){\line(1,0){7}}
\put(27.5,16.5){\line(1,0){3}} \put(27.5,18.5){\line(0,-1){2}}
\put(28.5,18.5){\line(0,-1){2}} \put(29.5,18.5){\line(0,-1){2}}
\put(30.5,18.5){\line(0,-1){2}}\put(31.5,18.5){\line(0,-1){1}}
\put(32.5,18.5){\line(0,-1){1}}\put(33.5,18.5){\line(0,-1){1}}
\put(34.5,18.5){\line(0,-1){1}}

\put(35,19){\line(0,-1){3}}

\put(35.5,18.5){\line(1,0){9}} \put(35.5,17.5){\line(1,0){9}}
\put(35.5,16.5){\line(1,0){1}}
\put(35.5,18.5){\line(0,-1){2}}\put(36.5,18.5){\line(0,-1){2}}
\put(37.5,18.5){\line(0,-1){1}}\put(38.5,18.5){\line(0,-1){1}}
\put(39.5,18.5){\line(0,-1){1}} \put(40.5,18.5){\line(0,-1){1}}
\put(41.5,18.5){\line(0,-1){1}}\put(42.5,18.5){\line(0,-1){1}}
\put(43.5,18.5){\line(0,-1){1}}\put(44.5,18.5){\line(0,-1){1}}

%%%%%%%%%5-15

\put(27.5,13.5){\line(1,0){6}} \put(27.5,12.5){\line(1,0){6}}
\put(27.5,11.5){\line(1,0){4}} \put(27.5,13.5){\line(0,-1){2}}
\put(28.5,13.5){\line(0,-1){2}} \put(29.5,13.5){\line(0,-1){2}}
\put(30.5,13.5){\line(0,-1){2}}\put(31.5,13.5){\line(0,-1){2}}
\put(32.5,13.5){\line(0,-1){1}}\put(33.5,13.5){\line(0,-1){1}}

\put(34,14){\line(0,-1){3}}

\put(34.5,13.5){\line(1,0){10}} \put(34.5,12.5){\line(1,0){10}}
\put(34.5,13.5){\line(0,-1){1}}
\put(35.5,13.5){\line(0,-1){1}}\put(36.5,13.5){\line(0,-1){1}}
\put(37.5,13.5){\line(0,-1){1}}\put(38.5,13.5){\line(0,-1){1}}
\put(39.5,13.5){\line(0,-1){1}} \put(40.5,13.5){\line(0,-1){1}}
\put(41.5,13.5){\line(0,-1){1}}\put(42.5,13.5){\line(0,-1){1}}
\put(43.5,13.5){\line(0,-1){1}}\put(44.5,13.5){\line(0,-1){1}}

%%%%%%%%%5-35

\put(26.5,8.5){\line(1,0){8}} \put(26.5,7.5){\line(1,0){8}}
\put(26.5,6.5){\line(1,0){2}} \put(26.5,8.5){\line(0,-1){2}}
\put(27.5,8.5){\line(0,-1){2}} \put(28.5,8.5){\line(0,-1){2}}
\put(29.5,8.5){\line(0,-1){1}}\put(30.5,8.5){\line(0,-1){1}}
\put(31.5,8.5){\line(0,-1){1}}\put(32.5,8.5){\line(0,-1){1}}
\put(33.5,8.5){\line(0,-1){1}}\put(34.5,8.5){\line(0,-1){1}}

\put(35,9){\line(0,-1){3}}

\put(35.5,8.5){\line(1,0){10}} \put(35.5,7.5){\line(1,0){10}}
\put(35.5,8.5){\line(0,-1){1}}\put(36.5,8.5){\line(0,-1){1}}
\put(37.5,8.5){\line(0,-1){1}}
\put(38.5,8.5){\line(0,-1){1}} \put(39.5,8.5){\line(0,-1){1}}
\put(40.5,8.5){\line(0,-1){1}}\put(41.5,8.5){\line(0,-1){1}}
\put(42.5,8.5){\line(0,-1){1}}\put(43.5,8.5){\line(0,-1){1}}
\put(44.5,8.5){\line(0,-1){1}}\put(45.5,8.5){\line(0,-1){1}}

%%%%%%%%%6-02

\put(50.5,33.5){\line(1,0){6}}\put(50.5,32.5){\line(1,0){6}}
\put(50.5,31.5){\line(1,0){6}} \put(50.5,33.5){\line(0,-1){2}}
\put(51.5,33.5){\line(0,-1){2}} \put(52.5,33.5){\line(0,-1){2}}
\put(53.5,33.5){\line(0,-1){2}}\put(54.5,33.5){\line(0,-1){2}}
\put(55.5,33.5){\line(0,-1){2}}\put(56.5,33.5){\line(0,-1){2}}

\put(57,34){\line(0,-1){3}}

\put(57.5,33.5){\line(1,0){8}}\put(57.5,32.5){\line(1,0){8}}
\put(57.5,31.5){\line(1,0){4}} \put(57.5,33.5){\line(0,-1){2}}
\put(58.5,33.5){\line(0,-1){2}} \put(59.5,33.5){\line(0,-1){2}}
\put(60.5,33.5){\line(0,-1){2}}\put(61.5,33.5){\line(0,-1){2}}
\put(62.5,33.5){\line(0,-1){1}}\put(63.5,33.5){\line(0,-1){1}}
\put(64.5,33.5){\line(0,-1){1}}\put(65.5,33.5){\line(0,-1){1}}

%%%%%%%%%6-13

\put(49.5,28.5){\line(1,0){7}} \put(49.5,27.5){\line(1,0){7}}
\put(49.5,26.5){\line(1,0){5}} \put(49.5,28.5){\line(0,-1){2}}
\put(50.5,28.5){\line(0,-1){2}} \put(51.5,28.5){\line(0,-1){2}}
\put(52.5,28.5){\line(0,-1){2}}\put(53.5,28.5){\line(0,-1){2}}
\put(54.5,28.5){\line(0,-1){2}}\put(55.5,28.5){\line(0,-1){1}}
\put(56.5,28.5){\line(0,-1){1}}

\put(57,29){\line(0,-1){3}}

\put(57.5,28.5){\line(1,0){9}} \put(57.5,27.5){\line(1,0){9}}
\put(57.5,26.5){\line(1,0){3}}
\put(57.5,28.5){\line(0,-1){2}}\put(58.5,28.5){\line(0,-1){2}}
\put(59.5,28.5){\line(0,-1){2}}
\put(60.5,28.5){\line(0,-1){2}} \put(61.5,28.5){\line(0,-1){1}}
\put(62.5,28.5){\line(0,-1){1}}\put(63.5,28.5){\line(0,-1){1}}
\put(64.5,28.5){\line(0,-1){1}}\put(65.5,28.5){\line(0,-1){1}}
\put(66.5,28.5){\line(0,-1){1}}

%%%%%%%%%6-04

\put(49.5,23.5){\line(1,0){6}} \put(49.5,22.5){\line(1,0){6}}
\put(49.5,21.5){\line(1,0){6}} \put(49.5,23.5){\line(0,-1){2}}
\put(50.5,23.5){\line(0,-1){2}} \put(51.5,23.5){\line(0,-1){2}}
\put(52.5,23.5){\line(0,-1){2}}\put(53.5,23.5){\line(0,-1){2}}
\put(54.5,23.5){\line(0,-1){2}}\put(55.5,23.5){\line(0,-1){2}}

\put(56,24){\line(0,-1){3}}

\put(56.5,23.5){\line(1,0){10}} \put(56.5,22.5){\line(1,0){10}}
 \put(56.5,21.5){\line(1,0){2}}\put(57.5,23.5){\line(0,-1){2}}\put(58.5,23.5){\line(0,-1){2}}
\put(59.5,23.5){\line(0,-1){1}}
\put(60.5,23.5){\line(0,-1){1}} \put(61.5,23.5){\line(0,-1){1}}
\put(62.5,23.5){\line(0,-1){1}}\put(63.5,23.5){\line(0,-1){1}}
\put(64.5,23.5){\line(0,-1){1}}\put(65.5,23.5){\line(0,-1){1}}
\put(56.5,23.5){\line(0,-1){2}}\put(66.5,23.5){\line(0,-1){1}}

%%%%%%%%%6-24

\put(48.5,18.5){\line(1,0){8}} \put(48.5,17.5){\line(1,0){8}}
\put(48.5,16.5){\line(1,0){4}} \put(48.5,18.5){\line(0,-1){2}}
\put(49.5,18.5){\line(0,-1){2}} \put(50.5,18.5){\line(0,-1){2}}
\put(51.5,18.5){\line(0,-1){2}}\put(52.5,18.5){\line(0,-1){2}}
\put(53.5,18.5){\line(0,-1){1}}\put(54.5,18.5){\line(0,-1){1}}
\put(55.5,18.5){\line(0,-1){1}}\put(56.5,18.5){\line(0,-1){1}}

\put(57,19){\line(0,-1){3}}

\put(57.5,18.5){\line(1,0){10}} \put(57.5,17.5){\line(1,0){10}}
\put(57.5,16.5){\line(1,0){2}}
\put(57.5,18.5){\line(0,-1){2}}\put(58.5,18.5){\line(0,-1){2}}
\put(59.5,18.5){\line(0,-1){2}}\put(60.5,18.5){\line(0,-1){1}}
\put(61.5,18.5){\line(0,-1){1}} \put(62.5,18.5){\line(0,-1){1}}
\put(63.5,18.5){\line(0,-1){1}}\put(64.5,18.5){\line(0,-1){1}}
\put(65.5,18.5){\line(0,-1){1}}\put(66.5,18.5){\line(0,-1){1}}
\put(67.5,18.5){\line(0,-1){1}}

%%%%%%%%%6-15

\put(48.5,13.5){\line(1,0){7}} \put(48.5,12.5){\line(1,0){7}}
\put(48.5,11.5){\line(1,0){5}} \put(48.5,13.5){\line(0,-1){2}}
\put(49.5,13.5){\line(0,-1){2}} \put(50.5,13.5){\line(0,-1){2}}
\put(51.5,13.5){\line(0,-1){2}}\put(52.5,13.5){\line(0,-1){2}}
\put(53.5,13.5){\line(0,-1){2}}\put(54.5,13.5){\line(0,-1){1}}
\put(55.5,13.5){\line(0,-1){1}}

\put(56,14){\line(0,-1){3}}

\put(56.5,13.5){\line(1,0){11}} \put(56.5,12.5){\line(1,0){11}}
\put(56.5,11.5){\line(1,0){1}}\put(56.5,13.5){\line(0,-1){2}}
\put(57.5,13.5){\line(0,-1){2}}\put(58.5,13.5){\line(0,-1){1}}
\put(59.5,13.5){\line(0,-1){1}}\put(60.5,13.5){\line(0,-1){1}}
\put(61.5,13.5){\line(0,-1){1}} \put(62.5,13.5){\line(0,-1){1}}
\put(63.5,13.5){\line(0,-1){1}}\put(64.5,13.5){\line(0,-1){1}}
\put(65.5,13.5){\line(0,-1){1}}\put(66.5,13.5){\line(0,-1){1}}
\put(67.5,13.5){\line(0,-1){1}}

%%%%%%%%%6-35

\put(47.5,8.5){\line(1,0){9}} \put(47.5,7.5){\line(1,0){9}}
\put(47.5,6.5){\line(1,0){3}} \put(47.5,8.5){\line(0,-1){2}}
\put(48.5,8.5){\line(0,-1){2}} \put(49.5,8.5){\line(0,-1){2}}
\put(50.5,8.5){\line(0,-1){2}}\put(51.5,8.5){\line(0,-1){1}}
\put(53.5,8.5){\line(0,-1){1}}\put(52.5,8.5){\line(0,-1){1}}
\put(55.5,8.5){\line(0,-1){1}}\put(54.5,8.5){\line(0,-1){1}}
\put(56.5,8.5){\line(0,-1){1}}

\put(57,9){\line(0,-1){3}}

\put(57.5,8.5){\line(1,0){11}} \put(57.5,7.5){\line(1,0){11}}
\put(57.5,6.5){\line(1,0){1}}
\put(57.5,8.5){\line(0,-1){2}}\put(58.5,8.5){\line(0,-1){2}}
\put(59.5,8.5){\line(0,-1){1}}
\put(60.5,8.5){\line(0,-1){1}} \put(61.5,8.5){\line(0,-1){1}}
\put(62.5,8.5){\line(0,-1){1}}\put(63.5,8.5){\line(0,-1){1}}
\put(64.5,8.5){\line(0,-1){1}}\put(65.5,8.5){\line(0,-1){1}}
\put(66.5,8.5){\line(0,-1){1}}\put(67.5,8.5){\line(0,-1){1}}
\put(68.5,8.5){\line(0,-1){1}}

%%%%%%%%%6-06

\put(48.5,3.5){\line(1,0){6}} \put(48.5,2.5){\line(1,0){6}}
\put(48.5,1.5){\line(1,0){6}} \put(48.5,3.5){\line(0,-1){2}}
\put(49.5,3.5){\line(0,-1){2}} \put(50.5,3.5){\line(0,-1){2}}
\put(51.5,3.5){\line(0,-1){2}}\put(52.5,3.5){\line(0,-1){2}}
\put(53.5,3.5){\line(0,-1){2}}\put(54.5,3.5){\line(0,-1){2}}

\put(55,4){\line(0,-1){3}}

\put(55.5,3.5){\line(1,0){12}} \put(55.5,2.5){\line(1,0){12}}
\put(55.5,3.5){\line(0,-1){1}}\put(56.5,3.5){\line(0,-1){1}}
\put(57.5,3.5){\line(0,-1){1}}
\put(58.5,3.5){\line(0,-1){1}} \put(59.5,3.5){\line(0,-1){1}}
\put(60.5,3.5){\line(0,-1){1}}\put(61.5,3.5){\line(0,-1){1}}
\put(62.5,3.5){\line(0,-1){1}}\put(63.5,3.5){\line(0,-1){1}}
\put(64.5,3.5){\line(0,-1){1}}\put(65.5,3.5){\line(0,-1){1}}
\put(66.5,3.5){\line(0,-1){1}}\put(67.5,3.5){\line(0,-1){1}}

%%%%%%%%%6-26

\put(47.5,-1.5){\line(1,0){8}} \put(47.5,-2.5){\line(1,0){8}}
\put(47.5,-3.5){\line(1,0){4}} \put(47.5,-1.5){\line(0,-1){2}}
\put(48.5,-1.5){\line(0,-1){2}} \put(49.5,-1.5){\line(0,-1){2}}
\put(50.5,-1.5){\line(0,-1){2}}\put(51.5,-1.5){\line(0,-1){2}}
\put(52.5,-1.5){\line(0,-1){1}}\put(53.5,-1.5){\line(0,-1){1}}
\put(54.5,-1.5){\line(0,-1){1}}\put(55.5,-1.5){\line(0,-1){1}}

\put(56,-1){\line(0,-1){3}}

\put(56.5,-1.5){\line(1,0){12}} \put(56.5,-2.5){\line(1,0){12}}
\put(56.5,-1.5){\line(0,-1){1}}\put(57.5,-1.5){\line(0,-1){1}}
\put(58.5,-1.5){\line(0,-1){1}}
\put(59.5,-1.5){\line(0,-1){1}} \put(60.5,-1.5){\line(0,-1){1}}
\put(61.5,-1.5){\line(0,-1){1}}\put(62.5,-1.5){\line(0,-1){1}}
\put(63.5,-1.5){\line(0,-1){1}}\put(64.5,-1.5){\line(0,-1){1}}
\put(65.5,-1.5){\line(0,-1){1}}\put(66.5,-1.5){\line(0,-1){1}}
\put(67.5,-1.5){\line(0,-1){1}}\put(68.5,-1.5){\line(0,-1){1}}

%%%%%%%%%6-46

\put(46.5,-6.5){\line(1,0){10}} \put(46.5,-7.5){\line(1,0){10}}
\put(46.5,-8.5){\line(1,0){2}} \put(46.5,-6.5){\line(0,-1){2}}
\put(47.5,-6.5){\line(0,-1){2}} \put(48.5,-6.5){\line(0,-1){2}}
\put(49.5,-6.5){\line(0,-1){1}}\put(50.5,-6.5){\line(0,-1){1}}
\put(51.5,-6.5){\line(0,-1){1}}\put(52.5,-6.5){\line(0,-1){1}}
\put(53.5,-6.5){\line(0,-1){1}}\put(54.5,-6.5){\line(0,-1){1}}
\put(55.5,-6.5){\line(0,-1){1}}\put(56.5,-6.5){\line(0,-1){1}}

\put(57,-6){\line(0,-1){3}}

\put(57.5,-6.5){\line(1,0){12}} \put(57.5,-7.5){\line(1,0){12}}
\put(57.5,-6.5){\line(0,-1){1}}\put(58.5,-6.5){\line(0,-1){1}}
\put(59.5,-6.5){\line(0,-1){1}}
\put(60.5,-6.5){\line(0,-1){1}} \put(61.5,-6.5){\line(0,-1){1}}
\put(62.5,-6.5){\line(0,-1){1}}\put(63.5,-6.5){\line(0,-1){1}}
\put(64.5,-6.5){\line(0,-1){1}}\put(65.5,-6.5){\line(0,-1){1}}
\put(66.5,-6.5){\line(0,-1){1}}\put(67.5,-6.5){\line(0,-1){1}}
\put(68.5,-6.5){\line(0,-1){1}}\put(69.5,-6.5){\line(0,-1){1}}

\end{picture}}
\caption{Bi-diagrams of degree $2$ minimal relations}\label{bidi2}
\end{figure}

As we already noticed in Remark \ref{remnotonlypluecker}, Figure
\ref{bidi2} once more shows that if $v<t$, then the relation
$(\tau_u^t|\tau_v^t)$ is a trivial extension of
$(\tau_u^v|\tau_v^v)$. On the other hand, the relations
$(\tau_u^v|\tau_v^v)$ are really new. Therefore, whenever $t$
increases by one, exactly $\lfloor t/2 \rfloor$ really new minimal
quadratic relations appear. Furthermore, notice that the Pl\"ucker
relations between $t$-minors are those with a $2\times t$ rectangle
on one side.

\begin{remark}\label{place of quadratics}
We have already used the coarse decomposition
\[\Sym^2(E\otimes F^*)=\Bl(\Sym^2 E\otimes
\Sym^2F^*\Br) \oplus \Bl(\Bw^2E
\otimes \Bw^2F^*\Br),
\]
so one may wonder where the bi-diagram $(\tau_u|\tau_v)$
is placed. The answer is already clear from Subsection
\ref{subdeg2}, namely:
\begin{itemize}
\item[(i)] $(\tau_u|\tau_v)$ is in $\Sym^2E\otimes
    \Sym^2F^*$ if and only if $u$ and $v$ are
    even;
    %\discuss{natural numbers omitted}
\item[(ii)] $(\tau_u|\tau_v)$ is in $\Bw^2E\otimes
    \Bw^2F^*$ if and only if $u$ and $v$ are odd.
\end{itemize}
\end{remark}

Now we want to look at the shape of the found minimal cubic
relations. Once again, in Figure \ref{bidi3} we omit the mirrored
relations.

\begin{figure}[htb]
{\setlength{\unitlength}{1.7mm}
\begin{picture}(70,45)(-4,0)

\put(-10,38.5){\line(0,-1){40}} \put(-1,38.5){\line(0,-1){40}}
\put(12,38.5){\line(0,-1){40}} \put(28,38.5){\line(0,-1){40}}
\put(48,38.5){\line(0,-1){40}} \put(71,38.5){\line(0,-1){40}}

\put(-16,35){\line(1,0){93}} \put(-16,29){\line(1,0){93}}
\put(-16,23){\line(1,0){93}} \put(-16,17){\line(1,0){93}}
\put(-16,11){\line(1,0){93}} \put(-16,5){\line(1,0){93}}

\put(-7,36.5){{\scriptsize $t=2$}} \put(3.3,36.5){{\scriptsize
$t=3$}} \put(18,36.5){{\scriptsize $t=4$}}
\put(36,36.5){{\scriptsize $t=5$}} \put(57.5,36.5){{\scriptsize
$t=6$}}

\put(-15.5,31.5){{\scriptsize $(\gamma_1|\lambda_1)$}}

\put(-15.5,25.5){{\scriptsize $(\rho_2|\sigma_2)$}}

\put(-15.5,19.5){{\scriptsize $(\gamma_2|\lambda_2)$}}

\put(-15.5,13.5){{\scriptsize $(\rho_3|\sigma_3)$}}

\put(-15.5,7.5){{\scriptsize $(\gamma_3|\lambda_3)$}}

\put(-13,1.2){$\vdots$}

\put(73,36.5){$\dots$}

\put(73,32){$\dots$}

\put(73,26){$\dots$}

\put(73,20){$\dots$}

\put(73,14){$\dots$}

\put(73,8){$\dots$}

\put(73,1){$\ddots$}

%%%%%%%%%2-1

\put(-9.5,33.5){\line(1,0){3}} \put(-9.5,32.5){\line(1,0){3}}
\put(-9.5,31.5){\line(1,0){3}} \put(-9.5,33.5){\line(0,-1){2}}
\put(-8.5,33.5){\line(0,-1){2}} \put(-7.5,33.5){\line(0,-1){2}}
\put(-6.5,33.5){\line(0,-1){2}}

\put(-6,34){\line(0,-1){4}}

\put(-5.5,33.5){\line(1,0){4}} \put(-5.5,32.5){\line(1,0){4}}
\put(-5.5,31.5){\line(1,0){1}} \put(-5.5,30.5){\line(1,0){1}}
\put(-5.5,33.5){\line(0,-1){3}} \put(-4.5,33.5){\line(0,-1){3}}
\put(-3.5,33.5){\line(0,-1){1}} \put(-2.5,33.5){\line(0,-1){1}}
\put(-1.5,33.5){\line(0,-1){1}}

%%%%%%%%3-1

\put(0.5,33.5){\line(1,0){4}} \put(0.5,32.5){\line(1,0){4}}
\put(0.5,31.5){\line(1,0){4}} \put(0.5,30.5){\line(1,0){1}}
\put(0.5,33.5){\line(0,-1){3}} \put(1.5,33.5){\line(0,-1){3}}
\put(2.5,33.5){\line(0,-1){2}} \put(3.5,33.5){\line(0,-1){2}}
\put(4.5,33.5){\line(0,-1){2}}

\put(5,34){\line(0,-1){4}}

\put(5.5,33.5){\line(1,0){5}} \put(5.5,32.5){\line(1,0){5}}
\put(5.5,31.5){\line(1,0){2}} \put(5.5,30.5){\line(1,0){2}}
\put(5.5,33.5){\line(0,-1){3}} \put(6.5,33.5){\line(0,-1){3}}
\put(7.5,33.5){\line(0,-1){3}} \put(8.5,33.5){\line(0,-1){1}}
\put(9.5,33.5){\line(0,-1){1}} \put(10.5,33.5){\line(0,-1){1}}

%%%%%%%%%%%%%%%%3-2

\put(-0.5,27.5){\line(1,0){5}} \put(-0.5,26.5){\line(1,0){5}}
\put(-0.5,25.5){\line(1,0){4}} \put(-0.5,27.5){\line(0,-1){2}}
\put(0.5,27.5){\line(0,-1){2}} \put(1.5,27.5){\line(0,-1){2}}
\put(2.5,27.5){\line(0,-1){2}} \put(3.5,27.5){\line(0,-1){2}}
\put(4.5,27.5){\line(0,-1){1}}

\put(5,28){\line(0,-1){4}}

\put(5.5,27.5){\line(1,0){6}} \put(5.5,26.5){\line(1,0){6}}
\put(5.5,25.5){\line(1,0){2}} \put(5.5,24.5){\line(1,0){1}}
\put(5.5,27.5){\line(0,-1){3}} \put(6.5,27.5){\line(0,-1){3}}
\put(7.5,27.5){\line(0,-1){2}} \put(8.5,27.5){\line(0,-1){1}}
\put(9.5,27.5){\line(0,-1){1}} \put(10.5,27.5){\line(0,-1){1}}
\put(11.5,27.5){\line(0,-1){1}}

%%%%%%%%%%%%%%%%%4-1

\put(14,33.5){\line(1,0){5}} \put(14,32.5){\line(1,0){5}}
\put(14,31.5){\line(1,0){5}} \put(14,30.5){\line(1,0){2}}
\put(14,33.5){\line(0,-1){3}} \put(15,33.5){\line(0,-1){3}}
\put(16,33.5){\line(0,-1){3}} \put(17,33.5){\line(0,-1){2}}
\put(18,33.5){\line(0,-1){2}} \put(19,33.5){\line(0,-1){2}}

\put(19.5,34){\line(0,-1){4}}

\put(20,33.5){\line(1,0){6}} \put(20,32.5){\line(1,0){6}}
\put(20,31.5){\line(1,0){3}} \put(20,30.5){\line(1,0){3}}
\put(20,33.5){\line(0,-1){3}} \put(21,33.5){\line(0,-1){3}}
\put(22,33.5){\line(0,-1){3}} \put(23,33.5){\line(0,-1){3}}
\put(24,33.5){\line(0,-1){1}} \put(25,33.5){\line(0,-1){1}}
\put(26,33.5){\line(0,-1){1}}

%%%%%%%%4-2

\put(13,27.5){\line(1,0){6}} \put(13,26.5){\line(1,0){6}}
\put(13,25.5){\line(1,0){5}} \put(13,24.5){\line(1,0){1}}
\put(13,27.5){\line(0,-1){3}} \put(14,27.5){\line(0,-1){3}}
\put(15,27.5){\line(0,-1){2}} \put(16,27.5){\line(0,-1){2}}
\put(17,27.5){\line(0,-1){2}}\put(18,27.5){\line(0,-1){2}}
\put(19,27.5){\line(0,-1){1}}

\put(19.5,28){\line(0,-1){4}}

\put(20,27.5){\line(1,0){7}} \put(20,26.5){\line(1,0){7}}
\put(20,25.5){\line(1,0){3}} \put(20,24.5){\line(1,0){2}}
\put(20,27.5){\line(0,-1){3}} \put(21,27.5){\line(0,-1){3}}
\put(22,27.5){\line(0,-1){3}} \put(23,27.5){\line(0,-1){2}}
\put(24,27.5){\line(0,-1){1}} \put(25,27.5){\line(0,-1){1}}
\put(26,27.5){\line(0,-1){1}}\put(27,27.5){\line(0,-1){1}}

%%%%%%%%%%%%%%4-3

\put(12.5,21.5){\line(1,0){6}} \put(12.5,20.5){\line(1,0){6}}
\put(12.5,19.5){\line(1,0){6}} \put(12.5,21.5){\line(0,-1){2}}
\put(13.5,21.5){\line(0,-1){2}} \put(14.5,21.5){\line(0,-1){2}}
\put(15.5,21.5){\line(0,-1){2}} \put(16.5,21.5){\line(0,-1){2}}
\put(17.5,21.5){\line(0,-1){2}} \put(18.5,21.5){\line(0,-1){2}}

\put(19,22){\line(0,-1){4}}

\put(19.5,21.5){\line(1,0){8}} \put(19.5,20.5){\line(1,0){8}}
\put(19.5,19.5){\line(1,0){2}} \put(19.5,18.5){\line(1,0){2}}
\put(19.5,21.5){\line(0,-1){3}} \put(20.5,21.5){\line(0,-1){3}}
\put(21.5,21.5){\line(0,-1){3}} \put(22.5,21.5){\line(0,-1){1}}
\put(23.5,21.5){\line(0,-1){1}} \put(24.5,21.5){\line(0,-1){1}}
\put(25.5,21.5){\line(0,-1){1}} \put(26.5,21.5){\line(0,-1){1}}
\put(27.5,21.5){\line(0,-1){1}}

%%%%%%%%%%%%%%%%%5-1

\put(31,33.5){\line(1,0){6}} \put(31,32.5){\line(1,0){6}}
\put(31,31.5){\line(1,0){6}} \put(31,30.5){\line(1,0){3}}
\put(31,33.5){\line(0,-1){3}} \put(32,33.5){\line(0,-1){3}}
\put(33,33.5){\line(0,-1){3}} \put(34,33.5){\line(0,-1){3}}
\put(35,33.5){\line(0,-1){2}} \put(36,33.5){\line(0,-1){2}}
\put(37,33.5){\line(0,-1){2}}

\put(37.5,34){\line(0,-1){4}}

\put(38,33.5){\line(1,0){7}} \put(38,32.5){\line(1,0){7}}
\put(38,31.5){\line(1,0){4}} \put(38,30.5){\line(1,0){4}}
\put(38,33.5){\line(0,-1){3}} \put(39,33.5){\line(0,-1){3}}
\put(40,33.5){\line(0,-1){3}} \put(41,33.5){\line(0,-1){3}}
\put(42,33.5){\line(0,-1){3}} \put(43,33.5){\line(0,-1){1}}
\put(44,33.5){\line(0,-1){1}} \put(45,33.5){\line(0,-1){1}}

%%%%%%%%%%%%%%%%%5-2

\put(30,27.5){\line(1,0){7}} \put(30,26.5){\line(1,0){7}}
\put(30,25.5){\line(1,0){6}} \put(30,24.5){\line(1,0){2}}
\put(30,27.5){\line(0,-1){3}} \put(31,27.5){\line(0,-1){3}}
\put(32,27.5){\line(0,-1){3}} \put(33,27.5){\line(0,-1){2}}
\put(34,27.5){\line(0,-1){2}} \put(35,27.5){\line(0,-1){2}}
\put(36,27.5){\line(0,-1){2}}\put(37,27.5){\line(0,-1){1}}

\put(37.5,28){\line(0,-1){4}}

\put(38,27.5){\line(1,0){8}} \put(38,26.5){\line(1,0){8}}
\put(38,25.5){\line(1,0){4}} \put(38,24.5){\line(1,0){3}}
\put(38,27.5){\line(0,-1){3}} \put(39,27.5){\line(0,-1){3}}
\put(40,27.5){\line(0,-1){3}} \put(41,27.5){\line(0,-1){3}}
\put(42,27.5){\line(0,-1){2}} \put(43,27.5){\line(0,-1){1}}
\put(44,27.5){\line(0,-1){1}} \put(45,27.5){\line(0,-1){1}}
\put(46,27.5){\line(0,-1){1}}

%%%%%%%%5-3

\put(29.5,21.5){\line(1,0){7}} \put(29.5,20.5){\line(1,0){7}}
\put(29.5,19.5){\line(1,0){7}} \put(29.5,18.5){\line(1,0){1}}
\put(29.5,21.5){\line(0,-1){3}} \put(30.5,21.5){\line(0,-1){3}}
\put(31.5,21.5){\line(0,-1){2}} \put(32.5,21.5){\line(0,-1){2}}
\put(33.5,21.5){\line(0,-1){2}} \put(34.5,21.5){\line(0,-1){2}}
\put(35.5,21.5){\line(0,-1){2}} \put(36.5,21.5){\line(0,-1){2}}

\put(37,22){\line(0,-1){4}}

\put(37.5,21.5){\line(1,0){9}} \put(37.5,20.5){\line(1,0){9}}
\put(37.5,19.5){\line(1,0){3}} \put(37.5,18.5){\line(1,0){3}}
\put(37.5,21.5){\line(0,-1){3}} \put(38.5,21.5){\line(0,-1){3}}
\put(39.5,21.5){\line(0,-1){3}} \put(40.5,21.5){\line(0,-1){3}}
\put(41.5,21.5){\line(0,-1){1}} \put(42.5,21.5){\line(0,-1){1}}
\put(43.5,21.5){\line(0,-1){1}} \put(44.5,21.5){\line(0,-1){1}}
\put(45.5,21.5){\line(0,-1){1}} \put(46.5,21.5){\line(0,-1){1}}

%%%%%%%%%%%%%%%%%5-4

\put(28.5,15.5){\line(1,0){8}} \put(28.5,14.5){\line(1,0){8}}
\put(28.5,13.5){\line(1,0){7}} \put(28.5,15.5){\line(0,-1){2}}
\put(29.5,15.5){\line(0,-1){2}} \put(30.5,15.5){\line(0,-1){2}}
\put(31.5,15.5){\line(0,-1){2}} \put(32.5,15.5){\line(0,-1){2}}
\put(33.5,15.5){\line(0,-1){2}} \put(34.5,15.5){\line(0,-1){2}}
\put(35.5,15.5){\line(0,-1){2}} \put(36.5,15.5){\line(0,-1){1}}

\put(37,16){\line(0,-1){4}}

\put(37.5,15.5){\line(1,0){10}} \put(37.5,14.5){\line(1,0){10}}
\put(37.5,13.5){\line(1,0){3}} \put(37.5,12.5){\line(1,0){2}}
\put(37.5,15.5){\line(0,-1){3}} \put(38.5,15.5){\line(0,-1){3}}
\put(39.5,15.5){\line(0,-1){3}} \put(40.5,15.5){\line(0,-1){2}}
\put(41.5,15.5){\line(0,-1){1}} \put(42.5,15.5){\line(0,-1){1}}
\put(43.5,15.5){\line(0,-1){1}} \put(44.5,15.5){\line(0,-1){1}}
\put(45.5,15.5){\line(0,-1){1}}\put(46.5,15.5){\line(0,-1){1}}
\put(47.5,15.5){\line(0,-1){1}}

%%%%%%%%%%%%%%%%%6-1

\put(51.5,33.5){\line(1,0){7}} \put(51.5,32.5){\line(1,0){7}}
\put(51.5,31.5){\line(1,0){7}} \put(51.5,30.5){\line(1,0){4}}
\put(51.5,33.5){\line(0,-1){3}} \put(52.5,33.5){\line(0,-1){3}}
\put(53.5,33.5){\line(0,-1){3}} \put(54.5,33.5){\line(0,-1){3}}
\put(55.5,33.5){\line(0,-1){3}} \put(56.5,33.5){\line(0,-1){2}}
\put(57.5,33.5){\line(0,-1){2}} \put(58.5,33.5){\line(0,-1){2}}

\put(59,34){\line(0,-1){4}}

\put(59.5,33.5){\line(1,0){8}} \put(59.5,32.5){\line(1,0){8}}
\put(59.5,31.5){\line(1,0){5}} \put(59.5,30.5){\line(1,0){5}}
\put(59.5,33.5){\line(0,-1){3}} \put(60.5,33.5){\line(0,-1){3}}
\put(61.5,33.5){\line(0,-1){3}} \put(62.5,33.5){\line(0,-1){3}}
\put(63.5,33.5){\line(0,-1){3}} \put(64.5,33.5){\line(0,-1){3}}
\put(65.5,33.5){\line(0,-1){1}} \put(66.5,33.5){\line(0,-1){1}}
\put(67.5,33.5){\line(0,-1){1}}

%%%%%%%%%%%%%%%%%6-2

\put(50.5,27.5){\line(1,0){8}} \put(50.5,26.5){\line(1,0){8}}
\put(50.5,25.5){\line(1,0){7}} \put(50.5,24.5){\line(1,0){3}}
\put(50.5,27.5){\line(0,-1){3}} \put(51.5,27.5){\line(0,-1){3}}
\put(52.5,27.5){\line(0,-1){3}} \put(53.5,27.5){\line(0,-1){3}}
\put(54.5,27.5){\line(0,-1){2}} \put(55.5,27.5){\line(0,-1){2}}
\put(56.5,27.5){\line(0,-1){2}}\put(57.5,27.5){\line(0,-1){2}}
\put(58.5,27.5){\line(0,-1){1}}

\put(59,28){\line(0,-1){4}}

\put(59.5,27.5){\line(1,0){9}} \put(59.5,26.5){\line(1,0){9}}
\put(59.5,25.5){\line(1,0){5}} \put(59.5,24.5){\line(1,0){4}}
\put(59.5,27.5){\line(0,-1){3}} \put(60.5,27.5){\line(0,-1){3}}
\put(61.5,27.5){\line(0,-1){3}} \put(62.5,27.5){\line(0,-1){3}}
\put(63.5,27.5){\line(0,-1){3}} \put(64.5,27.5){\line(0,-1){2}}
\put(65.5,27.5){\line(0,-1){1}} \put(66.5,27.5){\line(0,-1){1}}
\put(67.5,27.5){\line(0,-1){1}} \put(68.5,27.5){\line(0,-1){1}}

%%%%%%%%6-3

\put(50,21.5){\line(1,0){8}} \put(50,20.5){\line(1,0){8}}
\put(50,19.5){\line(1,0){8}} \put(50,18.5){\line(1,0){2}}
\put(50,21.5){\line(0,-1){3}} \put(51,21.5){\line(0,-1){3}}
\put(52,21.5){\line(0,-1){3}} \put(53,21.5){\line(0,-1){2}}
\put(54,21.5){\line(0,-1){2}} \put(55,21.5){\line(0,-1){2}}
\put(56,21.5){\line(0,-1){2}} \put(57,21.5){\line(0,-1){2}}
\put(58,21.5){\line(0,-1){2}}

\put(58.5,22){\line(0,-1){4}}

\put(59,21.5){\line(1,0){10}} \put(59,20.5){\line(1,0){10}}
\put(59,19.5){\line(1,0){4}} \put(59,18.5){\line(1,0){4}}
\put(59,21.5){\line(0,-1){3}} \put(60,21.5){\line(0,-1){3}}
\put(61,21.5){\line(0,-1){3}} \put(62,21.5){\line(0,-1){3}}
\put(63,21.5){\line(0,-1){3}} \put(64,21.5){\line(0,-1){1}}
\put(65,21.5){\line(0,-1){1}} \put(66,21.5){\line(0,-1){1}}
\put(67,21.5){\line(0,-1){1}} \put(68,21.5){\line(0,-1){1}}
\put(69,21.5){\line(0,-1){1}}

%%%%%%%%6-4

\put(49,15.5){\line(1,0){9}} \put(49,14.5){\line(1,0){9}}
\put(49,13.5){\line(1,0){8}} \put(49,12.5){\line(1,0){1}}
\put(49,15.5){\line(0,-1){3}} \put(50,15.5){\line(0,-1){3}}
\put(51,15.5){\line(0,-1){2}} \put(52,15.5){\line(0,-1){2}}
\put(53,15.5){\line(0,-1){2}} \put(54,15.5){\line(0,-1){2}}
\put(55,15.5){\line(0,-1){2}} \put(56,15.5){\line(0,-1){2}}
\put(57,15.5){\line(0,-1){2}} \put(58,15.5){\line(0,-1){1}}

\put(58.5,16){\line(0,-1){4}}

\put(59,15.5){\line(1,0){11}} \put(59,14.5){\line(1,0){11}}
\put(59,13.5){\line(1,0){4}} \put(59,12.5){\line(1,0){3}}
\put(59,15.5){\line(0,-1){3}} \put(60,15.5){\line(0,-1){3}}
\put(61,15.5){\line(0,-1){3}} \put(62,15.5){\line(0,-1){3}}
\put(63,15.5){\line(0,-1){2}} \put(64,15.5){\line(0,-1){1}}
\put(65,15.5){\line(0,-1){1}} \put(66,15.5){\line(0,-1){1}}
\put(67,15.5){\line(0,-1){1}} \put(68,15.5){\line(0,-1){1}}
\put(69,15.5){\line(0,-1){1}} \put(70,15.5){\line(0,-1){1}}

%%%%%%%%%%%%%%6-5

\put(48.5,9.5){\line(1,0){9}} \put(48.5,8.5){\line(1,0){9}}
\put(48.5,7.5){\line(1,0){9}} \put(48.5,9.5){\line(0,-1){2}}
\put(49.5,9.5){\line(0,-1){2}} \put(50.5,9.5){\line(0,-1){2}}
\put(51.5,9.5){\line(0,-1){2}} \put(52.5,9.5){\line(0,-1){2}}
\put(53.5,9.5){\line(0,-1){2}} \put(54.5,9.5){\line(0,-1){2}}
\put(55.5,9.5){\line(0,-1){2}} \put(56.5,9.5){\line(0,-1){2}}
\put(57.5,9.5){\line(0,-1){2}}

\put(58,10){\line(0,-1){4}}

\put(58.5,9.5){\line(1,0){12}} \put(58.5,8.5){\line(1,0){12}}
\put(58.5,7.5){\line(1,0){3}} \put(58.5,6.5){\line(1,0){3}}
\put(58.5,9.5){\line(0,-1){3}} \put(59.5,9.5){\line(0,-1){3}}
\put(60.5,9.5){\line(0,-1){3}} \put(61.5,9.5){\line(0,-1){3}}
\put(62.5,9.5){\line(0,-1){1}} \put(63.5,9.5){\line(0,-1){1}}
\put(64.5,9.5){\line(0,-1){1}} \put(65.5,9.5){\line(0,-1){1}}
\put(66.5,9.5){\line(0,-1){1}} \put(67.5,9.5){\line(0,-1){1}}
\put(68.5,9.5){\line(0,-1){1}} \put(69.5,9.5){\line(0,-1){1}}
\put(70.5,9.5){\line(0,-1){1}}
\end{picture}}
\caption{Bi-diagrams of degree $3$ minimal relations}\label{bidi3}

\end{figure}
Notice that, if $t>2u$, then the bi-shape
$(\gamma_u^\st|\lambda_u^t)$ is a trivial extension of
$(\gamma_u^{\,2u}|\lambda_u^{2u})$ (Proposition \ref{propretract}).
In the same vein, if $t>2u-1$, then the bi-shape
$(\rho_u^t|\sigma_u^t)$ is a trivial extension of
$(\rho_u^{2u-1}|\sigma_u^{2u-1})$. In other words, every time that
the size of minors $t$ increases by $1$, a new type of minimal cubic
relations between $t$-minors comes up:
\begin{itemize}
\item[(i)] If $t$ is even, then
    $(\gamma_{t/2}^\st|\lambda_{t/2}^t)$ starts a new series of
     minimal cubic relations between $t'$-minors, $t'\geq t$.
\item[(ii)] If $t$ is odd, then
    $(\rho_{(t+1)/2}^t|\sigma_{(t+1)/2}^t)$ starts a new series of
    new minimal cubic relations between $t'$-minors,
    $t'\geq t$.
\end{itemize}

\begin{remark}
We have the coarse decomposition:
\[
\Sym^3(E\otimes F^*) =\Bl(\Sym^3E\otimes
\Sym^3F^*\Br)  \oplus
\Bl(L_{(2,1)}E\otimes L_{(2,1)}F^*\Br)  \oplus
\Bl(\Bw^3E\otimes \Bw^3 F^*\Br).
\]
Therefore, as in Remark \ref{place of quadratics}, we would
like to place each $(\gamma_u|\lambda_u)$ and
$(\rho_u|\sigma_u)$ in an irreducible $H$-module:
\begin{itemize}
\item[(i)] $(\rho_u|\sigma_u)$ is in $L_{(2,1)}E
    \otimes L_{(2,1)}F^*$;
\item[(ii)] $(\gamma_u|\lambda_u)$ is in $\Sym^3E
    \otimes \Sym^3 F^*$ if $u$ is even;
\item[(iii)] $(\gamma_u|\lambda_u)$ is in $\Bw^3E
    \otimes \Bw^3F^*$ if $u$ is odd.
\end{itemize}
For $(\rho_u|\sigma_u)$ the $H$-irreducible has been explicitly
determined in the proof of \ref{shapeink3odd}.  For each the
remaining  two cases one inspects the unique predecessor.
\end{remark}

\subsection{Highest bi-weight vectors of the cubic minimal relations}
\label{sec_cubic_weight}

For completeness, in this subsection we will describe the polynomial
corresponding to the highest bi-weight vector of any cubic relation
we found up to now.

\subsubsection{Higehst bi-weight vectors of even cubics}

We need the following lemma:

\begin{lemma}
For all $u=1,\dots,\lfloor t/2\rfloor$ set
$K=\{t-2u+1,\dots,t+u\}\subseteq \NN$. The highest weight
vector of $L_{\gamma_u}V\subseteq \Tensor^3\bigl(\Bw^tV\bigr)$
is: {\small
\begin{equation}\label{rowpart}
\sum_{A,B,C}(-1)^{A,B,C}(e_1\wedge \dots \wedge e_{t-2u}\wedge
e_{K\setminus A})\tensor (e_1\wedge \dots \wedge
e_{t-2u}\wedge e_{K\setminus B})\tensor (e_1\wedge \dots
\wedge e_{t-2u}\wedge e_{K\setminus C})
\end{equation}}
where the sum runs over the $3$-partitions $A,B,C$ of $K$ such
that $|A|=|B|=|C|=u$.
\end{lemma}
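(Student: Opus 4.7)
The plan is to verify directly that the element $v_u$ defined in the display \eqref{rowpart} is nonzero, has weight $\tl\gamma_u=(3^{t-2u},2^{3u})$, and is $\UU_-(V)$-invariant. These three properties (together with the fact, coming from Proposition \ref{tensordec} and the uniqueness of the bi-predecessor $(\tau_u|\tau_u)$ computed in the proof of Theorem \ref{shapeink3}, that $L_{\gamma_u}V$ occurs with multiplicity one in $\bigotimes^3\bigwedge^t V$) force $v_u$ to be the highest weight vector of the copy of $L_{\gamma_u}V$.

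For the nonzero and weight properties: distinct ordered $3$-partitions $(A,B,C)$ of $K$ produce distinct basis tensors in $\bigotimes^3\bigwedge^t V$ (the subset $K\setminus X$ appearing in each factor uniquely recovers $X$), so $v_u\neq 0$. A direct tally shows that each $e_k$ with $k\in\{1,\dots,t-2u\}$ appears in all three tensor factors of every summand, while each $e_k$ with $k\in K$ appears in exactly two of the three factors (it is missing from the factor whose indexing set contains $k$), yielding total weight $(3^{t-2u},2^{3u})=\tl\gamma_u$.

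For $\UU_-(V)$-invariance it suffices to show that every root vector $e_{ij}$ with $n\ge i>j\ge 1$ annihilates $v_u$, i.e.\ the $x$-linear coefficient of $E_{ij}^x v_u$ vanishes. One treats the cases on the positions of $(i,j)$ separately. If $i\le t-2u$ (so $j\le t-2u$ also), or if $i\in K$ and $j\le t-2u$, then in every factor where $e_i$ occurs the vector $e_j$ is already present in the initial block $e_1\wedge\dots\wedge e_{t-2u}$, so the substitution $e_i\mapsto e_j$ kills the contribution factor-by-factor. If $i>t+u$, then $e_i$ is absent from every summand and the action is trivial. The essential case is $i,j\in K$ with $i>j$; here the $x$-linear term is a sum indexed by pairs $((A,B,C),r)$ with $r\in\{1,2,3\}$ satisfying $j\in X_r$ and $i\in X_s$ for some (necessarily unique) $s\ne r$, and one builds a sign-reversing involution $((A,B,C),r)\mapsto((A',B',C'),s)$, where $(A',B',C')$ is obtained from $(A,B,C)$ by swapping the membership of $i$ and $j$ between $X_r$ and $X_s$. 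Both paired contributions yield the same basis tensor of $\bigotimes^3\bigwedge^t V$, so the desired cancellation reduces to showing that their combined coefficient vanishes.

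The delicate step is the sign verification for this involution: one must combine the change in the shuffle sign $(-1)^{A,B,C}\to(-1)^{A',B',C'}$ with the two wedge-reordering signs needed to bring the modified wedges into ascending order in the two paired tensors, and show that the product is $-1$. The shuffle sign changes by $-1$ because $i$ and $j$ are transposed between two distinct blocks of the concatenated sequence, and routine bookkeeping shows that the product of the two reordering signs is $+1$, since the displacements of $e_j$ in the two affected wedges are related symmetrically by the swap. This is the main obstacle. An alternative route that avoids the explicit sign computation is to apply Lemma \ref{lemmaformula} iteratively through Remark \ref{remformula}: first pass from $(t)$ to $\tau_u=(t+u,t-u)$ inside $\bigwedge^t V\otimes\bigwedge^t V$ to obtain a highest weight vector of $L_{\tau_u}V$, then pass from $\tau_u$ to $\gamma_u$ inside $\bigotimes^3\bigwedge^t V$; the resulting two-step formula simplifies to coincide with \eqref{rowpart} up to a nonzero scalar.
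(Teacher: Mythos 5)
Your overall strategy (show the displayed element is nonzero, has weight $\tl\gamma_u$, and is killed by every lowering operator, then invoke multiplicity one) is legitimate and differs from the paper's argument, which obtains the invariance all at once by pushing the $\operatorname{SL}_{3u}$-invariant $e_1^*\wedge\dots\wedge e_{3u}^*$ through the maps $\Bw^{3u}V_0^*\to\Tensor^{3u}V_0^*\to\Tensor^3\Bw^{u}V_0^*\cong\Tensor^3\Bw^{2u}V_0$. The nonvanishing, the weight count, and the easy cases of the $\UU_-(V)$-invariance (when $j\le t-2u$ or $i>t+u$) are fine. The problem is exactly at the step you yourself single out as the crux: for $i>j$ both in $K$, your two sign claims are false in general. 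Take $t=4$, $u=2$, $i=4$, $j=2$, and pair $(A,B,C)=(\{1,4\},\{2,3\},\{5,6\})$ with $(A',B',C')=(\{1,2\},\{3,4\},\{5,6\})$. The concatenations $1,4,2,3,5,6$ and $1,2,3,4,5,6$ both have an even number of inversions, so the shuffle sign does \emph{not} change by $-1$; and the two wedge-reordering signs are $+1$ (for $e_1\wedge e_4\wedge e_5\wedge e_6$ with $e_4\mapsto e_2$) and $-1$ (for $e_3\wedge e_4\wedge e_5\wedge e_6$ with $e_4\mapsto e_2$), so their product is $-1$, not $+1$. The correct bookkeeping involves $c=\#\{k\in X_r\cup X_s:\ j<k<i\}$: one finds $(-1)^{A',B',C'}=-(-1)^{A,B,C}(-1)^{c}$, while the two reordering signs are $(-1)^{N-c_r}$ and $(-1)^{N-c_s}$ with $c_r+c_s=c$ (here $N$ is the number of elements of $K$ strictly between $j$ and $i$), so the paired contributions are $(-1)^{A,B,C}(-1)^{N-c_r}$ and $-(-1)^{A,B,C}(-1)^{N+c_r}$, which do cancel. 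So the cancellation you need is true, but not for the reasons you give; your two intermediate assertions hold only when there are no elements of $X_r\cup X_s$ between $j$ and $i$ (e.g.\ when $u=1$, where all blocks are singletons), so as written the proof of the general case has a gap, albeit a repairable one.

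The fallback you offer does not close this gap: iterating Lemma \ref{lemmaformula} (via Remark \ref{remformula}) from $(t)$ to $\tau_u$ to $\gamma_u$ does produce a highest weight vector of the unique copy of $L_{\gamma_u}V$ in $\Tensor^3\Bw^tV$, but the lemma asserts that the \emph{specific} expression \eqref{rowpart} is that vector, and your claim that the two-step formula ``simplifies to coincide with \eqref{rowpart} up to a nonzero scalar'' is exactly what would have to be proved; by multiplicity one this is equivalent to showing that \eqref{rowpart} itself is a nonzero $\UU_-(V)$-invariant of weight $\tl\gamma_u$, i.e.\ the original problem. Either carry out the corrected sign computation above, or follow the paper's route, where the contraction $\Bw^uV_0^*\cong\Bw^{2u}V_0$ packages all of these signs automatically.
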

\begin{proof}
Set $v=3u$ and consider a $\kk$-vector space $V_0$ of
dimension $v$ with the $\operatorname{SL}(V_0)$-action. Let us
look at
\[
\Bw^v V_0^*\xrightarrow{\alpha}\Tensor^v V_0^*\xrightarrow{\beta}
\Bw^uV_0^*\tensor \Bw^uV_0^*\tensor \Bw^uV_0^*\xrightarrow{\delta}
\Bw^{2u} V_0\tensor \Bw^{2u} V_0\tensor \Bw^{2u} V_0
\]
Here $\alpha$ is antisymmetrization, namely:
\[
x_1\wedge\dots\wedge x_v\mapsto
\sum_\pi (-1)^\pi x_{\pi(1)}\tensor\dots\tensor x_{\pi(v)}.
\]
In particular $\aa$ is $\operatorname{SL}(V_0)$-equivariant.
The map $\beta$ cuts $x_1\tensor\dots\tensor x_v$ into blocks
and maps tensor power to exterior power, so it is also
$\operatorname{SL}(V_0)$-equivariant:
\[
x_1\tensor\dots\tensor x_v\mapsto (x_1\wedge\dots\wedge x_u)\tensor
(x_{u+1}\wedge\dots\wedge x_{2u})\tensor (x_{2u+1}\wedge\dots\wedge x_{v})
\]
The map $\delta$ is the one that gives the isomorphism as
$\operatorname{SL}(V_0)$-modules of $\Bw^uV_0^*$ and $\Bw ^{2u}
V_0$. It is defined, with respect to a fixed basis
$e_1,\dots,e_v$ of $V_0$, as follows. Let $e_1^*,\dots,e_v^*$
be the dual basis of $V_0^*$. Then
\[
e_{i_1}^*\wedge\dots\wedge
e_{i_u}^*=(-1)^{i_1,\dots,i_u}e_I^*\quad\mapsto\quad
(-1)^{i_1,\dots,i_u}(-1)^{u(u-1)/2}(-1)^{I,J\setminus
I}e_{J\setminus I}.
\]
where $I=\{i_1,\dots,i_u\}$ and $J=\{1,\dots,v\}$. The
constant sign $(-1)^{u(u-1)/2}$ is irrelevant for our purpose,
and we will omit it. We can combine the two other signs as
\[
(-1)^{i_1,\dots,i_u}(-1)^{I,J\setminus I}=(-1)^{i_1,\dots,i_u,J\setminus I}.
\]
Now we can start from the $\operatorname{SL}(V_0)$-invariant
$e_1^*\wedge\dots\wedge e_v^*\in \Bw^v V_0^*$ and apply our
maps. Because all the maps involved are
$\operatorname{SL}(V_0)$-equivariant we end with an
$\operatorname{SL}(V_0)$-invariant in $\Tensor^3\Bw^{2u} V_0$. We
can assume that the permutations are increasing in the three
blocks since the sign $(-1)^{i_1,\dots,i_u}$ ``corrects'' the
order. Thus we get
\[
e_1^*\wedge\dots\wedge e_v^*\mapsto (u!)^3\sum_{F,G,H} (-1)^{F,G,H}
(-1)^{F,G\cup H}(-1)^{G,F\cup H}(-1)^{H,F\cup G}e_{J\setminus F}
\tensor e_{J\setminus G}\tensor e_{J\setminus H}.
\]
where the sum is extended over all the $3$-partitions $F,G,H$
of $J$ such that $|F|=|G|=|H|=u$. But $ (-1)^{F,G\cup
H}(-1)^{G,F\cup H}(-1)^{H,F\cup G}$ is constant, namely equal
to $(-1)^{3u^2}$. Removing the constant sign and dividing by
$(u!)^3$ yields
\[
\sum_{F,G,H} (-1)^{F,G,H}e_{J\setminus F}\tensor e_{J\setminus G}
\tensor e_{J\setminus H}.
\]
Since the above element is $\operatorname{SL}(V_0)$-invariant,
the element of $\Tensor^3\bigl(\Bw^tV\bigr)$ of the statement,
namely
\[
\sum_{A,B,C}(-1)^{A,B,C}(e_1\wedge \dots \wedge e_{t-2u}\wedge e_{K\setminus A})
\tensor (e_1\wedge \dots \wedge e_{t-2u}\wedge e_{K\setminus B})
\tensor (e_1\wedge \dots \wedge e_{t-2u}\wedge e_{K\setminus C}),
\]
is $\UU_-(V)$-invariant. Moreover, its weight is $\tl
\gamma_u$, therefore it is the highest weight vector of
$L_{\gamma_u}V$.
\end{proof}

By a similar and simpler construction (we need not to dualize) we
can compute also the highest weight vector of
$L_{\lambda_u}W^*\subseteq \Tensor^3\Bl(\Bw^tW^*\Br)$:
\begin{equation}\label{columnpart}
\sum_{L,M,N} (-1)^{L,M,N}(f_1^*\wedge \dots \wedge f_{t-u}^*\wedge f_L^*)\tensor (f_1^*\wedge \dots
\wedge f_{t-u}^*\wedge f_M^*)\tensor (f_1^*\wedge \dots \wedge f_{t-u}^*\wedge f_N^*)
\end{equation}
where the sum is extended over the $3$-partitions $L,M,N$ of
$\{t-u+1,\dots,t+2u\}$ such that $|L|=|M|=|N|=u$.

Now we tensor the row part \eqref{rowpart} and the column part
\eqref{columnpart} together and pass to the symmetric power
$(S_t)_3$. Then each monomial appears $6$ times since the
monomials only depend on the set of pairs $(A,L),(B,M)$ and
$(C,N)$, but not on their order anymore. Permuting these sets
does not change the sign, since both row and column factor
change by the same sign. So, dividing by $6$, we can assume
that $A,B,C$ is ordered lexicographically. The element we get
is the highest bi-weight vector of $L_{\gamma_u}V\tensor
L_{\lambda_u}W^*\subseteq (J_t)_3$. In particular it is a
minimal relation between $t$-minors of degree $3$, and all the
cubic shape relations of type $(\gamma_u|\lambda_u)$ are in the
$G$-space generated by it. Explicitly, such a relation is:
\begin{equation}\label{expldeg3}
\gg_u=\sum_{{A,B,C}\atop{L,M,N}} (-1)^{A,B,C} (-1)^{L,M,N}
[P,K\setminus A|Q,L][P,K\setminus B|Q,M][P,K\setminus C|Q,N],
\end{equation}
where the sum runs over the $3$-partitions $A,B,C$ of
$K=\{t-2u+1,\dots,t+u\}$ and $L,M,N$ of $\{t-u+1,\dots,t+2u\}$
such that $|A|=|B|=|C|=|L|=|M|=|N|=u$ and $A,B,C$ are ordered
lexicographically. Moreover $P=\{1,\dots,t-2u\}$ and
$Q=\{1,\dots,t-u\}$. Of course there are also the mirror
relations of \eqref{expldeg3}, namely the ones obtained
switching columns by rows. We will denote  them by $\gg_u'$.

\begin{remark}
As already noticed, the highest bi-weight vector of
$(\gamma_u^\st|\lambda_u^t)$ is a trivial extension of the highest
bi-weight vector of the same irreducible $G$-representation relative
to $2u$-minors, namely $(\gamma_u^{\,2u}|\lambda_u^{2u})$. In this
case $\gg_u$ assumes the following simpler form:
\[\gg_u=\sum_{{A,B,C}\atop{L,M,N}} (-1)^{A,B,C} (-1)^{L,M,N}
[K\setminus A|1,\ldots ,u,L][K\setminus B|1,\ldots ,u,M][K\setminus C|1,\ldots ,u,N],\]
where the sum runs over the $3$-partitions $A,B,C$ of
$K=\{1,\dots,u\}$ and $L,M,N$ of $\{u+1,\dots,4u\}$
such that $|A|=|B|=|C|=|L|=|M|=|N|=u$ and $A,B,C$ are ordered
lexicographically.
\end{remark}

%\begin{remark}\label{RemDetRel}
%When $u=1$, the relation \eqref{expldeg3} can be written as the
%following determinant:
%\[
%\det \begin{pmatrix}
%[P,t-1,t|Q,t]&[P,t-1,t|Q,t+1]&[P,t-1,t|Q,t+2]\\
%[P,t-1,t+1|Q,t]&[P,t-1,t+1|Q,t+1]&[P,t-1,t+1|Q,t+2]\\
%[P,t,t+1|Q,t]&[P,t,t+1|Q,t+1]&[P,t,t+1|Q,t+2]
%\end{pmatrix}.
%\]
%where $P=\{1,\dots,t-2\}$ and $Q=\{1,\dots,t-1\}$. If $t\leq 3$
%this is the highest bi-weight vector of the only shape
%relation. So in these cases all the cubic shape relations are
%in the $G$-space generated by the above determinantal equation.
%\end{remark}

\subsubsection{Highest bi-weight vectors of odd cubics}

Let $u$ be a positive integer in $\{2,\ldots ,\allowbreak
\lceil t/2 \rceil\}$. We are going to describe the highest
weight vector of one of the copies of
\[
L_{\rho_u}V\subseteq \Tensor^3\Bl(\Bw^tV\Br).
\]
To this aim, let us set
\[
v_1=\sum_{A,B,C}(-1)^{A,B,C}(e_P\wedge e_{K\setminus A})\otimes
(e_P\wedge e_{K\setminus B}\wedge e_{t+u})\otimes (e_P\wedge e_{K\setminus C})
\]
and
\[
v_2=\sum_{A,B,C}(-1)^{A,B,C}(e_P\wedge e_{K\setminus B}\wedge e_{t+u})
\otimes (e_P\wedge e_{K\setminus A}) \otimes (e_P\wedge e_{K\setminus C}),
\]
where the sums run over the partitions $A,B,C$ of
$K=\{t-2u+2,\ldots ,t+u-1\}$ such that $|A|=|C|=u-1$ and
$|B|=u$. Moreover, $P=\{1,\ldots ,t-2u+1\}$.
It is not difficult to show that the element
\begin{equation}\label{hvodd1}
v=v_1-v_2 \in\bigotimes^3 \Bl(\Bw^tV\Br)
\end{equation}
is a nonzero $\UU_-(V)$-invariant. Moreover, since $v$ has weight
$\tl\rho_u$, it is the highest weight vector of one of the copies of $L_{\rho_u}V$.

In the same vein, let $u\in \{2,\dots,\lceil t/2 \rceil\}$.
Analogously to above, we set
\[
w_1=\sum_{L,M,N}(-1)^{L,M,N}(f_Q^*\wedge f_L^*\wedge f_{t-u+1}^*)\otimes
(f_Q^*\wedge f_M^*) \otimes (f_Q^*\wedge f_N^*\wedge f_{t-u+1}^*)\]
and
\[
w_2=\sum_{L,M,N}(-1)^{L,M,N}(f_Q^*\wedge f_M^*)\otimes (f_Q^*\wedge f_L^*\wedge
f_{t-u+1}^*)\otimes (f_Q^*\wedge f_N^*\wedge f_{t-u+1}^*),
\]
where the sums run over the partitions $L,M,N$ of
$\{t-u+2,\ldots ,t+2u-1\}$ such that $|L|=|N|=u-1$ and $|M|=u$.
Furthermore, $Q=\{1,\ldots ,t-u\}$. Once again, it is not difficult to show that the element
\begin{equation}\label{hvodd2}
w=w_1-w_2 \in \Tensor^3\Bl(\Bw^tW^*\Br)
\end{equation}
is a nonzero $\UU_+(W)$-invariant. Moreover, since $w$ has weight
$\tl\sigma_u$, it is the highest weight vector of one of the copies of $L_{\sigma_u}W^*$.

Now, as for the even relations, we tensor the row part
\eqref{hvodd1} and the column part \eqref{hvodd2} together and
pass to the symmetric power $(S_t)_3$. After some manipulations, we get:
{\small
\begin{multline}\label{expldeg3odd}
\hh_u=\\\sum_{{A,B,C}\atop{L,M,N}} (-1)^{A,B,C} (-1)^{L,M,N}(
[P,K\setminus A|Q,L,t-u+1][P,K\setminus B,t+u|Q,M][P,K\setminus C|Q,N,t-u+1] \\
-[P,K\setminus A|Q,M][P,K\setminus
B,t+u|Q,L,t-u+1][P,K\setminus C|Q,N,t-u+1])
\end{multline}
}where the sum runs over the $3$-partitions $A,B,C$ of
$K=\{t-2u+2,\dots,t+u-1\}$ and $L,M,N$ of $\{t-u+2,\dots,t+2u-1\}$
such that $|A|=|C|=|L|=|N|=u-1$, $|B|=|M|=u$ and $A$ is less than $C$
lexicographically. Moreover $P=\{1,\dots,t-2u+1\}$ and
$Q=\{1,\dots,t-u\}$. Of course there are also the mirror
relations of \eqref{expldeg3odd}, namely the ones obtained
switching columns by  rows. We will denote them by $\hh_u'$.

We believe that the relations found so far generate $J_t$.
Despite of the rather limited evidence for this belief we
formulate it as a conjecture:

\begin{conjecture}\label{conjrel}
For all $t,m,n$ the polynomials $\ff_{u,v}$ of degree $2$
and $\gg_u,\gg_u',\hh_u,\hh_u'$ of degree $3$ (as far as they are
defined in $S_t(m,n)$) generate $J_t(m,n)$ as a
$G$-ideal. Equivalently,
\begin{align*}
J_t/(S_t)_1J_t\cong\Dirsum_{{u,v\in\{0,\ldots ,t\}}\atop{{u+v \text{ even}}\atop{u\neq v}}}
 L_{\tau_u}V\tensor L_{\tau_v}W^*&\dirsum
 \Dirsum_{u\leq m-t\atop 2u\leq n-t} L_{\gamma_u}V\tensor L_{\lambda_u} W^*\dirsum
 \Dirsum_{u\leq m-t\atop2u\leq n-t+1} L_{\rho_u}V\tensor L_{\sigma_u} W^*\\
 &\dirsum
 \Dirsum_{u\leq n-t\atop 2u\leq m-t} L_{\lambda_u}V\tensor L_{\gamma_u} W^*\dirsum
 \Dirsum_{u\leq n-t\atop2u\leq m-t+1} L_{\sigma_u}V\tensor L_{\rho_u} W^*.
\end{align*}
\end{conjecture}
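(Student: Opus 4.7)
The existence half of the conjecture --- that each $\ff_{u,v}$, $\gg_u$, $\gg_u'$, $\hh_u$, $\hh_u'$ is a minimal generator of the claimed bi-shape --- is already established in Subsections \ref{subdeg2}, \ref{evencubics}, and \ref{secoddrelations}. The content of the conjecture is the \emph{completeness} statement: no other minimal generators of $J_t$ exist in any degree. The plan is to proceed by induction on the degree $d$, aiming to show that for $d\ge 4$ there are no minimal generators at all, while for $d=2,3$ every minimal bi-shape is among those listed (the degree $2$ analysis of Subsection \ref{subdeg2} already accomplishes this for $d=2$; for $d=3$ one must combine a classification of cubic minimal bi-shapes as shape relations with the classification of all cubic shape relations promised in Subsection \ref{nomoredeg3sh}).

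\textbf{Reductions.} Using Proposition \ref{full_retract} together with its representation-theoretic counterpart Proposition \ref{propretract}, it is enough to bound the generating degrees of $J_t(m,n)$ for fixed $t$ in a matrix of size depending only on $t$; Theorem \ref{independencefromn} makes this explicit by saying minimal relations already live in an $m\times(m+t)$ matrix. In conjunction with the Castelnuovo--Mumford regularity bound for $A_t$ from Theorem \ref{regularity} and a priori information on the Hilbert function, this gives a finite degree range to check, which is the principle underlying the computer verifications of Section \ref{Upper}. Proposition \ref{dual} then halves the work by identifying $A_t(n,n)$ with $A_{n-t}(n,n)$.

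\textbf{Inductive step.} Fix $d\ge 4$ and an asymmetric $(t,d)$-admissible bi-diagram $(\gamma|\lambda)$ occurring in $(S_t)_d$. I want to show $(\gamma|\lambda)$ is not minimal. By Theorem \ref{GandH}(i), for each $\mu'\vdash d-1$ multiplication induces a surjection $(S_t)_1\tensor (S_t)_{\mu'}\to \Dirsum_{\nu} (S_t)_\nu$ extended over the $1$-successors $\nu$ of $\mu'$. So if for every $\mu\vdash d$ with $(\gamma|\lambda)$ appearing in $(S_t)_\mu$ there exists a $1$-predecessor $\mu'$ of $\mu$ and an asymmetric bi-predecessor $(\gamma'|\lambda')$ of $(\gamma|\lambda)$ inside $(S_t)_{\mu'}$, then by \eqref{decat} the shape $(\gamma'|\lambda')$ lies in $J_t$, and Theorem \ref{GandH}(i) places $(\gamma|\lambda)$ in $(S_t)_1\cdot J_t$. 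Lemma \ref{lemmaformula} would be the workhorse for producing the highest weight vector of $(\gamma|\lambda)$ explicitly from one of the predecessor, matching the abstract multiplication with a concrete formula so that nonvanishing can be verified at the level of monomials.

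\textbf{Main obstacle.} The argument collapses when, for some $\mu$, every bi-predecessor in every $(S_t)_{\mu'}$ (with $\mu'$ a $1$-predecessor of $\mu$) is symmetric. For bi-diagrams of single $\Bw^t$-type Theorem \ref{GandH}(iv) resolves this cleanly, but in higher degree the $\GL(V)$-decomposition of $L_\mu E = L_\mu(\Bw^t V)$ is an open plethysm, so one cannot even enumerate the bi-predecessors inside each $(S_t)_{\mu'}$, let alone decide the $G$-structure of the relevant multiplication map. Compounding this is Remark \ref{non_dir_sum}: once $\mult_{(\gamma|\lambda)}(S_t)\ge 2$, the ideal $J_t$ need not split as $\Dirsum_\mu \bigl(J_t\cap (S_t)_\mu\bigr)$, so an analysis stratum by stratum in $\mu$ is insufficient, and one must track the precise $G$-equivariant embedding of the diagonal copy of $(J_t)_{(\gamma|\lambda)}$ inside $\Dirsum_\mu (S_t)_\mu$. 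I expect these two intertwined obstacles --- the open plethysm for $L_\mu(\Bw^t V)$ and the genuinely non-$H$-invariant nature of $J_t$ --- to be the core difficulty, and any general proof is likely to require new structural input beyond what is developed in Section \ref{sec_rep}, perhaps a refined combinatorial description of the kernel of the multiplication $(S_t)_1\tensor (S_t)_{d-1}\to (S_t)_d$ restricted to the $(\gamma|\lambda)$-isotypic component.
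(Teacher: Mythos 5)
The statement you are asked to prove is Conjecture \ref{conjrel}; the paper itself contains no proof of it, only evidence (the degree $2$ classification of Subsection \ref{subdeg2}, the cubic shape relations of Subsections \ref{sec_cubic_sh}--\ref{sec_cubic_weight} and their completeness among shape relations in \ref{nomoredeg3sh} and \ref{T-shape_rel}, computer verifications for $t=2$, $m\le 4$ and $m=n=5$, the dual case $t=3$, $m=n=5$, and the exclusion of degree $4$ minimal relations for $t=2$). Your proposal, to its credit, does not pretend otherwise: the inductive step you outline is exactly the mechanism of Theorem \ref{GandH}(i)--(ii), and you correctly identify where it breaks down --- when every bi-predecessor of $(\gamma|\lambda)$ inside every relevant stratum $(S_t)_{\mu'}$ is symmetric, one cannot conclude non-minimality, and because the plethysm $L_\mu(\Bw^t V)$ is unknown and $J_t$ does not split as $\Dirsum_\mu J_t\cap(S_t)_\mu$ (Remark \ref{non_dir_sum}), one cannot even set up the case analysis in general. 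So the ``proof'' has a genuine and, at present, unfillable gap: it is precisely the open content of the conjecture. In particular, nothing in your argument (or in the paper) shows that every minimal cubic relation for general $t$ is a shape relation --- this is known only for $t=2$ and $t=3$, and by computation rather than by the predecessor argument --- and nothing excludes minimal relations in degree $\ge 4$ beyond the computed case $t=2$, $d=4$.

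One concrete inaccuracy in your reduction step should also be corrected: Theorem \ref{independencefromn} only bounds the number of columns, $n\le m+t$; it does not bound $m$, and the regularity bound of Theorem \ref{regularity} (hence the degree range in Corollary \ref{generalupper}) grows roughly like $m^2$. So it is not true that for fixed $t$ the conjecture reduces to a computation ``in a matrix of size depending only on $t$''; the finite checks of Section \ref{Upper} are per pair $(t,m)$, which is why the paper can settle only $t=2$, $m\le 4$ (plus $5\times 5$ and its dual) and why the general statement remains a conjecture. If you want to push beyond the paper, the place to aim is Conjecture \ref{StrongAlg} or the single $\Bw^t$-type reduction mentioned after Conjecture \ref{conjrel} (via \cite{BrVa}): proving that all minimal relations are of single $\Bw^t$-type, or that occurrence of a bi-predecessor in a $1$-predecessor stratum forces membership in $(S_t)_1(S_t)_{(\alpha|\beta)}$, would convert your outline into a proof.
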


It is remarkable that all the minimal relations we have found,
are not only shape relations, but even of single $\Bw^t$-type.
If one could prove that all minimal relations were of single
$\Bw^t$-type, then the conjecture would be proved as well: as
shown in \cite{BrVa}, the conjecture indeed lists all (minimal)
relations $(\gamma|\lambda)$ in which both $\gamma$ and
$\lambda$ are of single $\Bw^t$-type.

\begin{remark}
(a) How far $J_t(m,n)$ is from the ideal generated by the
degree $2$ relations can be easily analyzed in the case $t=2$,
$m=3$, $n=4$. In this case the ideal $Q$ generated by the
Plücker relations is a complete intersection ideal of height
$6$ and $Q=J_2(3,4)\cap P$ where $P$ is a prime
ideal  generated by $Q$ and $(S_2)_{(3,3|3,3)}$. In fact,
there is an automorphism of $S_2(3,4)$ carrying $J_2(3,4)$ into
$P$ so that $S_2(3,4)/P\cong A_2(3,4)$. Furthermore for  $t=2$,
$m=3$, $n=5$ the ideal of quadrics in $J_2(3,5)$ generate an ideal whose codimension is smaller than
that of $J_2(3,5)$ itself.

(b) It was shown in \cite{BC5} that the ideal $I$ generated by
the Plücker relations and the degree $3$ relations in the
irreducible representation of the bi-shapes
$(\gamma_1|\lambda_1)$ and $(\lambda_1|\gamma_1)$ satisfy the
following property: $J_t(m,n)_P=I_P$ for all prime ideals
$P\supset J_t(m,n)$ for which $(A_t)_P$ is non-singular. (The
singular locus of $A_t$ was also determined in \cite{BC5}.)
This supports Conjecture \ref{conjrel} to some extent.

(c) Using the methods of Section \ref{sec4xn}, we have computed the relations of the algebra of $2$-minors of a
symmetric $n\times n$ matrix with $n\le 5$ rows. Surprisingly the
ideal is generated in degree $2$.

(d) On the other hand, De Negri \cite[Theorem 1.4]{DN} proved that there are no
degree $2$ relations between $2t$-pfaffians of an alternating
$n\times n$ matrix for arbitrary $t$ and $n$ in characteristic $0$.
\end{remark}

\subsection{Determinantal relations}\label{DetREl}
It turns out that the relations $\gg_1$
are of determinantal type. In the following we want to indicate
how to construct more such determinantal highest bi-weight vectors
in $J_t$. They are closely related to the structure of
\[
\Bw^dE\tensor \Bw^dF^*.
\]
As usual by now, we (have) set $E=\Bw^t V$, $F=\Bw^t W$, and
$H=\GL(E)\times \GL(F)$. The $H$-bi-shape associated with the above $H$-module is $(d|d)$.

If we order the canonical bases of $E$ and $F$ in such a way
that this linear order extends the componentwise partial order
on $t$-uples of the canonical bases in $V$ and $W$,
respectively, then the unipotent subgroup of $G$ that we used
to define $U$-invariants embeds naturally into the unipotent
subgroup of $H$ defined by the order of the base elements.
Therefore $H$-$U$-invariants are in particular
$G$-$U$-invariants (in self explaining notation). The
$H$-$U$-invariant of shape $(d|d)$ is simply the $d$-minor of the
matrix whose entries represent the pairs of the first $d$ base
vectors in $E$ and $F$, respectively. It remains to fill the
rows and columns of this $d$-minor in such a way that one
obtains an element in $J_t$.

The crucial point is that the linear extension of the partial order
is not unique (apart from trivial cases). Therefore we can choose
different orders in $E$ and $F$ to produce asymmetric $G$-shapes in
$S_t$, and these belong automatically to the ideal $J_t$ of
relations. In particular, the third largest element of a basis of
$E$ can be chosen in two ways,
%\discuss{slightly shortened}
and this
fact leads to the cubic relation $\gg_1$.

We discuss the case $t=2$ in detail. In each triangle of Example
\ref{triangle} below we take an initial subsequence of each
row, and if no such subsequence sticks out further to the right
than the one above it, the total sequence formed by
concatenation represents an initial sequence in a suitable
linear extension of the partial order. The entries of each
subsequence represent a hook of type $(u+1,1,1,\dots,1)\vdash
2u$. The concatenated sequence represents a shape that is
obtained by nesting these hooks, and thus we obtain
$\GL(V)$-shapes in $\Bw^d E$.

\begin{example}\label{triangle}
Let us consider the following two initial segments
corresponding to two different linear extensions of the
componentwise order:
\[
\begin{matrix}
\diamond&{\bf 12}&{\bf 13}&{\bf 14}&{\bf 15}&16&\cdots\\
\bullet&&{\bf 23}&{\bf 24}&{\bf 25}&26&\cdots\\
\ast&&&{\bf 34}&{\bf 35}&36&\cdots\\
&&&&45&46&\cdots\\
&&&&&&\cdots
\end{matrix}
\hspace{10mm}
\begin{matrix}
\diamond&{\bf 12}&{\bf 13}&{\bf 14}&{\bf 15}&{\bf 16}&\cdots\\
\bullet&&{\bf 23}&{\bf 24}&{\bf 25}&26&\cdots\\
\ast&&&{\bf 34}&35&36&\cdots\\
&&&&45&46&\cdots\\
&&&&&&\cdots
\end{matrix}
\]
The elements of the initial segments are written in bold. The
symbols at the beginning of the rows should help to understand
how to get the following bi-shape from the two above initial
segments:
\[
{\setlength{\unitlength}{0.7mm}
\begin{picture}(30,30)(-5,-5)

\put(-18.3,16.5){$\cdot$}

\put(-24,16){$\diamond$} \put(-19,16){$\diamond$}
\put(-14,16){$\diamond$} \put(-9,16){$\diamond$}
\put(-4,16){$\diamond$} \put(-24,11){$\diamond$}
\put(-24,6){$\diamond$} \put(-24,1){$\diamond$}

\put(-19,11){$\bullet$} \put(-14,11){$\bullet$}
\put(-9,11){$\bullet$} \put(-4,11){$\bullet$}
\put(-19,6){$\bullet$} \put(-19,1){$\bullet$}

\put(-14,6){$\ast$} \put(-9,6){$\ast$} \put(-4,6){$\ast$}
\put(-14,1){$\ast$}

\put(-25,20){\line(1,0){25}} \put(-25,15){\line(1,0){25}}
\put(-25,10){\line(1,0){25}} \put(-25,5){\line(1,0){25}}
\put(-25,0){\line(1,0){15}}

\put(-25,20){\line(0,-1){20}} \put(-20,20){\line(0,-1){20}}
\put(-15,20){\line(0,-1){20}} \put(-10,20){\line(0,-1){20}}
\put(-5,20){\line(0,-1){15}} \put(0,20){\line(0,-1){15}}

\put(7.5,22.5){\line(0,-1){25}}

\put(16,16){$\diamond$} \put(21,16){$\diamond$}
\put(26,16){$\diamond$} \put(31,16){$\diamond$}
\put(36,16){$\diamond$} \put(41,16){$\diamond$}
\put(16,11){$\diamond$} \put(16,6){$\diamond$}
\put(16,1){$\diamond$} \put(16,-4){$\diamond$}

\put(21,11){$\bullet$} \put(26,11){$\bullet$}
\put(31,11){$\bullet$} \put(36,11){$\bullet$}
\put(21,6){$\bullet$} \put(21,1){$\bullet$}

\put(26,6){$\ast$} \put(31,6){$\ast$}

\put(15,20){\line(1,0){30}} \put(15,15){\line(1,0){30}}
\put(15,10){\line(1,0){25}} \put(15,5){\line(1,0){20}}
\put(15,0){\line(1,0){10}} \put(15,-5){\line(1,0){5}}

\put(15,20){\line(0,-1){25}} \put(20,20){\line(0,-1){25}}
\put(25,20){\line(0,-1){20}} \put(30,20){\line(0,-1){15}}
\put(35,20){\line(0,-1){15}} \put(40,20){\line(0,-1){10}}
\put(45,20){\line(0,-1){5}}

\end{picture}}
\]
The $G$-$U$-invariant of the above bi-shape is the determinant
of the $9\times 9$-matrix in Figure \ref{MatRel}.
\begin{figure}[htb]
$$\begin{pmatrix}
[12|12]&[12|13]&[12|14]&[12|15]&[12|16]&[12|23]&[12|24]&[12|25]&[12|34]\\
[13|12]&[13|13]&[13|14]&[13|15]&[13|16]&[13|23]&[13|24]&[13|25]&[13|34]\\
[14|12]&[14|13]&[14|14]&[14|15]&[14|16]&[14|23]&[14|24]&[14|25]&[14|34]\\
[15|12]&[15|13]&[15|14]&[15|15]&[15|16]&[15|23]&[15|24]&[15|25]&[15|34]\\
[23|12]&[23|13]&[23|14]&[23|15]&[23|16]&[23|23]&[23|24]&[23|25]&[23|34]\\
[24|12]&[24|13]&[24|14]&[24|15]&[24|16]&[24|23]&[24|24]&[24|25]&[24|34]\\
[25|12]&[25|13]&[25|14]&[25|15]&[25|16]&[25|23]&[25|24]&[25|25]&[25|34]\\
[34|12]&[34|13]&[34|14]&[34|15]&[34|16]&[34|23]&[34|24]&[34|25]&[34|34]\\
[35|12]&[35|13]&[35|14]&[35|15]&[35|16]&[35|23]&[35|24]&[35|25]&[35|34]
\end{pmatrix}$$
\caption{A matrix representing a determinantal relation}\label{MatRel}
\end{figure}
Such a determinant is a degree $9$ relation between $2$-minors.
\end{example}

Surprisingly, we have found the complete $\GL(V)$-decomposition
of $\Bw^d E$ for $t=2$: see \cite[p.~65]{We} for this classical
plethysm.

\section{Upper bounds on the degree of minimal
relations}\label{Upper}

In this section we will give some evidence for the truth of
Conjecture \ref{conjrel}. For $t=2$ we have the strongest support:
(i) the conjecture holds for $m\times n$-matrices with $m\leq 4$ and
$m=n=5$; (ii)  the only minimal relations of degree $3$ are those
described in the conjecture; (iii) there are no minimal relations in
degree $4$. For $t=3$ we have verified that there are no other
minimal relations in degree $3$. For arbitrary $t$, we can give some
combinatorial support for the conjecture.

The results for $t=2$ and $t=3$ depend on computer
calculations. For them an a priori bound on the degree of a
minimal generator of $J_t$ is very useful, and we will derive
from the Castelnuovo-Mumford regularity of $A_t$.

\subsection{Castelnuovo-Mumford regularity of $A_t$}

For the computation of the Castelnuovo-Mumford regularity we will use  the initial algebra $\init(A_t)$ of $A_t$ with respect to a
diagonal term order $\prec$ on $R$, i.e.\ a term order such that
$\init([i_1\dots i_p|j_1\dots j_p])=x_{i_1j_1}\cdots x_{i_pj_p}$.

\begin{thm}\label{regularity}
Apart from the cases discussed in Remark \ref{partcases},
we have:
\begin{itemize}
\item[(i)] If $m+n-1<\lfloor mn/t \rfloor$, then
\[
\reg(A_t) =  mn-\lceil mn/t \rceil .
\]
\item[(ii)] if $m+n-1\geq \lfloor mn/t  \rfloor$, then  \[
\reg(A_t) = mn-\lfloor m(n+k_0)/t \rfloor .
\]
where $k_0 = \lceil (tm+tn-mn)/(m-t)
    \rceil$. \end{itemize}
\end{thm}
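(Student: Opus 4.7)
The plan is to pass to the diagonal initial subalgebra $B_t = \init_{\prec}(A_t)$, which by \cite{BC1} is a normal, Cohen--Macaulay affine semigroup ring sharing the Hilbert series of $A_t$. Since $A_t$ is itself Cohen--Macaulay and has Krull dimension $mn$ when $t<m$, both algebras have the same $h$-polynomial, and therefore $\reg A_t = \reg B_t = a(B_t) + mn$, where $a(\cdot)$ denotes the $a$-invariant. It thus suffices to compute $a(B_t)$.

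To this end, I would invoke the Danilov--Stanley description of the canonical module of a normal affine semigroup ring: if $C\subseteq\NN^{mn}$ is the semigroup generated by the exponent vectors of the initial monomials of the $t$-minors---i.e.\ by the indicator vectors of the strict $t$-chains in the product poset $[m]\times[n]$---then $-a(B_t)$ equals the smallest degree $d$ (in the grading where each generator has degree $1$) for which a lattice element of $C$ lies in the relative interior of the rational cone $\mathbb{R}_{\geq 0}C$. Concretely, one must determine the smallest $d$ admitting a multiset of $d$ strict $t$-chains in $[m]\times[n]$ whose sum satisfies with strict inequality every facet condition of the cone; in particular such a sum must cover every cell $(i,j)$ of the $m\times n$ grid a positive number of times.

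The combinatorial computation of this minimum splits naturally into the two cases of the theorem. In the regime $m+n-1<\lfloor mn/t\rfloor$, the only binding facet inequality is the total covering condition $td\geq mn$ (each chain contributes exactly $t$ cells), producing $d=\lceil mn/t\rceil$ and the first formula. In the opposite regime, one cannot stay in the interior by covering alone: since any strict $t$-chain lies on a single staircase of length at most $m+n-1$ in $[m]\times[n]$, additional facet inequalities of $C$ become active, and the task is to identify which one is saturated last. The hard part, and the main obstacle, is precisely this facet analysis; I expect the critical inequality to be the one controlling how many chains can avoid the first few rows, giving the threshold $k_0=\lceil(tm+tn-mn)/(m-t)\rceil$, and the proof should be completed by exhibiting an explicit configuration of $d=\lfloor m(n+k_0)/t\rfloor$ chains meeting it with equality while verifying that no smaller $d$ works.
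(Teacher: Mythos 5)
Your overall strategy coincides with the paper's: pass to the diagonal initial algebra, use Cohen--Macaulayness of $A_t$ and of $\init(A_t)$ (\cite[Theorem 7.10]{BC2}) to get $\reg(A_t)=mn+a(A_t)=mn+a(\init(A_t))$, and compute $-a$ as the least degree in which the canonical module $\omega$ of the normal semigroup ring $\init(A_t)$ is nonzero. Up to there nothing is wrong. The paper makes your Danilov--Stanley description concrete via \cite[Lemma 3.3]{BC1}: $\omega$ is generated by the monomials $\init(\Delta)$, where $\Delta$ is a product of minors of shape $\gamma=(\gamma_1,\dots,\gamma_h)$ with $|\gamma|=td$, $h<d$, and with $\prod x_{ij}$ dividing $\init(\Delta)$. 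So besides the covering (coordinate) conditions there is exactly one further relevant constraint, namely that the number of factors be strictly smaller than the degree; it is not an inequality about ``how many chains can avoid the first few rows''.

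The genuine gap is that all of the theorem's actual content lies in the facet analysis and optimization that you explicitly defer (``I expect the critical inequality\dots''). Two things are missing. First, the lower bound (no smaller $d$ admits an interior point) rests on a combinatorial inequality you neither state nor replace: if the poset $X=\{x_{ij}\}$ with $x_{ij}\le x_{hk}$ iff $i=h,\ j=k$ or $i<h,\ j<k$ is covered by $h$ chains of total cardinality $N$, then $h\ge N/m+m-1$ (Lemma \ref{decofx} of the paper). Without it you cannot rule out degrees below $\lceil mn/t\rceil$ in case (i), and in case (ii) you cannot run the arithmetic with $k_0$; the paper characterizes $k_0$ as the least $k$ with $m+n+k-1<\lfloor m(n+k)/t\rfloor$ and obtains a contradiction with this minimality in the two subcases $s=k_0$ and $s<k_0$. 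Second, the upper bound needs explicit witnesses in the asserted degree: products of minors of shape $(m^{n-m+1},(m-1)^2,\dots,1^2,r_0)$ in case (i) and $(m^{k_0+n-m},(m-1)^2,\dots,1^2,m-p_0)$ in case (ii), with full support and with the verification that the number of rows is strictly less than $d_0$ --- which is precisely where the hypothesis $m+n-1<\lfloor mn/t\rfloor$, respectively the definition of $k_0$, enters. As written, your proposal sets up the correct framework but stops exactly where the proof of the stated formulas begins.
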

\begin{proof}
We know that  $A_t$ is  Cohen-Macaulay  by \cite[Theorem 7.10]{BC2} and has dimension $mn$ by \cite[Proposition 10.16]{BV} because we have excluded the cases listed in Remark \ref{partcases}.
Therefore we have  $\reg(A_t)=\dim A_t+a(A_t)=mn+a(A_t)$.
Here $a(A_t)$ is the $a$-invariant of $A_t$, i.e. the opposite of the least degree of a non-zero element of the graded canonical module of $A_t$.
 Since by \cite[Theorem 7.10]{BC2}   $\init(A_t)$ is Cohen-Macaulay as well, we have $a(A_t)=a(\init(A_t))$.  Hence it is enough to compute $a(\init(A_t))$. Denote by $\omega$ the canonical module of $\init(A_t)$.
By \cite[Lemma 3.3]{BC1} $\omega$ is  generated by the monomials of the form $\init(\Delta)$,  where
$\Delta$ is a product of minors of $X$ of shape
$\gamma=(\gamma_1,\dots,\gamma_h)$ where $|\gamma|=td$, $h<d$
and such that $\xx = \prod x_{ij}$ divides $\init(\Delta)$.
Therefore, if $d$  is the least number for which such a
$\Delta$ exists, then  $\reg(A_t)=mn-d$.

First let us consider case (i). Set $d_0=\lceil mn/t\rceil$. Of
course $\omega_d=0$ if $d<d_0$. We have to show that
$\omega_{d_0}\neq 0$.  Let us pick the
unique integer $r_0$ with $0\leq r_0<t$ and $mn+r_0=d_0t$. Of course
we can consider a product $\Delta\in R$ of minors of shape $\gamma =
(m^{n-m+1},(m-1)^2,(m-2)^2,\dots,1^2,r_0)\vdash d_0t$ (possibly the
partition has to be reordered but this does not matter) such that
$\xx$ divides $\init(\Delta)$. (a) If $r_0=0$, then $\gamma$ is a
partition of $m+n-1$ rows: since $m+n-1<d_0$ by hypothesis, we have
$\init(\Delta)\in \omega$. (b) If $r_0>0$, the partition $\gamma$
consists of $m+n$ rows. Then $d_0=\lceil mn/t \rceil = \lfloor mn/t
\rfloor + 1$, so the hypothesis implies $m+n < d_0$. Therefore also
$\init(\Delta)\in \omega$ if $r_0>0$. We are done in case (i).

Now let us discuss case (ii). Notice that the integer $k_0$
introduced in (ii) is larger than $0$. Let $p_0$ be the unique
integer such that $0\leq p_0<t$ and $m(n+k_0)=d_0t+p_0$. We can
consider a product $\Delta\in R$ of minors of shape $\gamma =
(m^{k_0+n-m},(m-1)^2,\dots,1^2,m-p_0)$ such that $\xx$ divides
$\Delta$. This is a partition of $d_0t$ with $k_0+n+m-1$ parts.
By the choice of $k_0$, one can verify that $k_0+n+m-1<d_0$. So
$\init(\Delta)\in \omega$, which implies $\omega_{d_0}\neq 0$.

To complete the proof showing that $\omega_d=0$ whenever
$d<d_0$, we need the following easy lemma.
\begin{lemma}\label{decofx}
With a little abuse of notation set $X=\{x_{ij} \ : \
i=1,\dots, m, \ j=1,\dots,n \}$. Define a poset structure on
$X$ in the following way:
\[
x_{ij}\leq x_{hk} \ \ \text{ if \ \ \ }i=h
\text{ and $j=k$ \ \ or \ \ $i<h$ and $j<k$}.
\]
Suppose that $X=X_1 \cup \dots \cup X_h$ where each $X_i$ is a
chain, i.e. any two elements of $X_i$ are comparable, and set
$N=\sum_{i=1}^h|X_i|$. Then
\[
h\geq N/m+m-1.
\]
\end{lemma}

Let us take a product of minors $\Delta=\delta_1 \cdots
\delta_h$ such that $\init(\Delta)\in \omega$. Let $\lambda$ be
the shape of $\Delta$ and suppose by contradiction that
$|\lambda|=td$ with $d<d_0$. For $i=1,\dots,h$ set
\[
X_i=\{x_{pr}: x_{pr}|\init(\delta_i)\}.
\]
Since $\xx$ divides $\init(\Delta)$, with the notation of Lemma
\ref{decofx} we have that $X=\cup_{i=1}^h X_i$ where each $X_i$
is a chain with respect to the order defined on $X$. So, by
Lemma \ref{decofx},
\[
h\geq dt/m+m-1.
\]
We recall that $d_0t=mn+mk_0-p_0$, where $0\leq p_0 < t $. Of
course we can write $dt=mn+ms-q$ in a unique way, where $0\leq
q < m$. Before going on, notice that $k_0$ is the smallest
natural number $k$ satisfying the inequality
\[
 m+n+k-1 < \Bl\lfloor \frac{m(n+k)}t\Br\rfloor.
\]
Of course $s\leq k_0$. There are two cases:
\begin{itemize}
\item[(i)] If $s=k_0$, consider the inequalities
\[
m+n+(s-1)-1 = \frac{dt+q}m + m -2 < \frac{dt}m+m-1\leq h \leq d-1.
\]
Notice that, since $d<d_0$, we have that $q\geq p_0+t$.
Moreover $m<2t$, otherwise we would be in case (i) of the
theorem. Thus
\[
 d-1 = \frac{m(n+s)-q-t}t \leq \Bl\lfloor \frac{m(n+(s-1))}t \Br\rfloor.
 \]
The inequalities above contradicts the minimality of $k_0$.
\item[(ii)] If $s<k_0$, then
\[
n+s+m -1 = \frac{dt+q}m + m -1 \leq h < d  = \frac{m(n+s)-q}t
\leq \Bl\lfloor \frac{m(n+s)}t\Br\rfloor.
\]
Once again, this yields a contradiction to the minimality of
$k_0$.
\end{itemize}
To sum up, we deduce that $\omega_d=0$ whenever $d<d_0$, and
this completes the proof.
\end{proof}

\begin{remark}
Let us look at the cases in Theorem \ref{regularity}.
\begin{itemize}
\item[(i)] If $X$ is a square matrix, that is $m=n$, one can
    easily check that we are in case (i) of Theorem
    \ref{regularity} if and only if  $m\geq 2t$.
\item[(ii)] The natural number $k_0$ of Theorem
    \ref{regularity} may be very large. For instance,
    consider the case $t=m-1$ and $n=m+1$ with $m\geq 3$.  One can easily check that
    we are in the case (ii) of Theorem \ref{regularity}. In
    this case we have $k_0=m^2-2m-1$. Therefore Theorem
    \ref{regularity} yields
    \[\reg(A_{m-1}(m,m+1))=m.\]
\end{itemize}
\end{remark}

Since $\reg(J_t)=\reg(A_t)+1$ bounds  the degree of a minimal
generator of $J_t$ from above, Theorem \ref{regularity} yields
an upper bound for the degree of a minimal relation between
$t$-minors.

\subsection{Minimal relations between $2$-minors of a $4\times n$-matrix}
\label{sec4xn} In this subsection we will indicate how to
verify Conjecture \ref{conjrel} for $J_2(m,n)$ with $m\leq 4$
and $m=n=5$. The following result enables us to succeed in this
case by machine computation. It says that a minimal relation
between $t$-minors of a $m\times n$-matrix must already
``live'' in a $m\times (m+t)$-matrix.

\begin{thm}\label{independencefromn}
Let $(\gamma |\lambda)$ be a minimal representation in
$J_t(m,n)$. Then $(\gamma|\lambda)$ is a minimal representation
already in $J_t(m,m+t)$. In particular, if we denote the
highest degree of a minimal generator of $J_t(m,n)$ by
$d(t,m,n)$ , then
\[
d(t,m,n)\leq d(t,m,m+t).
\]
\end{thm}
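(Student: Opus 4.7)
The plan is to first show that a minimal representation $(\gamma|\lambda)$ in $J_t(m,n)$ must have $\lambda_1 \leq m+t$ (while $\gamma_1 \leq m$ is automatic), and then to transfer minimality from $J_t(m,n)$ down to $J_t(m,m+t)$ by observing that the relevant highest bi-weight vector already lives in the smaller presentation. The degree inequality $d(t,m,n)\leq d(t,m,m+t)$ is immediate once these two claims are established.

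For the first step, I suppose for contradiction that $(\gamma|\lambda)$ is minimal in $(J_t(m,n))_d$ with $\lambda_1 > m+t$. By the definition of $t$-predecessor, every bi-predecessor $(\gamma'|\lambda')$ satisfies $\lambda'_1 \geq \lambda_1 - t > m$ while $\gamma'_1 \leq \gamma_1 \leq m$; thus each is strictly asymmetric, and by Proposition \ref{multJ_t} the full $(\gamma'|\lambda')$-isotypic of $(S_t)_{d-1}$ lies in $(J_t)_{d-1}$. Let $Q \subseteq (J_t)_{d-1}$ denote the sum of all such bi-predecessor isotypic components. Pieri's formula \eqref{Pieri} applied on the $V$- and $W^*$-sides shows that for any $G$-stable complement $R$ of $Q$ in $(S_t)_{d-1}$, the image of $(S_t)_1 \tensor R \to (S_t)_d$ is supported on $t$-bi-successors of shapes in $R$, none of which equals $(\gamma|\lambda)$. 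Surjectivity of multiplication combined with complete reducibility of $G$-representations then forces the entire $(\gamma|\lambda)$-isotypic of $(S_t)_d$ into $(S_t)_1 \cdot Q \subseteq (S_t)_1 \cdot J_t$. Since $(\gamma|\lambda)$ is asymmetric, this isotypic coincides with its counterpart in $J_t$ by Proposition \ref{multJ_t}, contradicting minimality. Hence $\lambda_1 \leq m+t$.

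For the second step, let $y$ be the highest bi-weight vector of any irreducible $(\gamma|\lambda)$-summand of $J_t(m,n)$. Its $W^*$-weight $(-\tl\lambda_{\lambda_1}, \ldots, -\tl\lambda_1, 0, \ldots, 0)$ vanishes in every position beyond $\lambda_1 \leq m+t$; expanding $y$ in the natural monomial basis of $S_t(m,n)$, each monomial is itself a weight vector, and so every $t$-minor factor $[I|J]$ that appears in $y$ must satisfy $J \subseteq \{1,\ldots, m+t\}$. Therefore $y \in S_t(m,m+t) \cap J_t(m,n) = J_t(m,m+t)$, and any expression $y = \sum s_i y_i$ with $s_i \in (S_t(m,m+t))_+$ and $y_i \in J_t(m,m+t)$ would lift verbatim along the canonical inclusion $S_t(m,m+t) \hookrightarrow S_t(m,n)$, contradicting the minimality of $y$ in $J_t(m,n)$.

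The main obstacle is the Pieri argument in the first step: one has to verify carefully that the $G$-isotypic structure controls the multiplication map $(S_t)_1 \tensor (S_t)_{d-1} \to (S_t)_d$ tightly enough that asymmetry of all bi-predecessors really does push $(\gamma|\lambda)$ into $(S_t)_1 \cdot J_t$. Once this representation-theoretic reduction is secured, the weight-support argument of the second step is essentially formal.
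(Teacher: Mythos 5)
Your proof is correct and follows essentially the same route as the paper's: minimality forces $(\gamma|\lambda)$ to admit a symmetric bi-predecessor (equivalently, if $\lambda_1>m+t$ every bi-predecessor is asymmetric, hence lies in $J_t$, and Pieri plus surjectivity of multiplication pushes the whole $(\gamma|\lambda)$-isotypic into $(S_t)_1J_t$), which yields $\lambda_1\le \gamma_1+t\le m+t$, and then minimality descends to $J_t(m,m+t)$. The only differences are presentational: you reprove the needed ``all bi-predecessors asymmetric $\Rightarrow$ not minimal'' fact directly from the $G$-side Pieri formula rather than quoting Theorem \ref{GandH}, and you make explicit the transfer step (column support of the highest bi-weight vector together with $J_t(m,m+t)=J_t(m,n)\cap S_t(m,m+t)$) that the paper's proof leaves implicit.
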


\begin{proof}
Suppose that $(\gamma|\lambda)$ is a minimal irreducible
representation of $J_t(m,n)$. Then it is impossible that
$(\gamma|\lambda)$ has only asymmetric bi-predecessors by Theorem
\ref{GandH}. Since $\gamma_1\le m$, we must have $\lambda_1\le
m+t$. Therefore it is a minimal irreducible representation in
$J_t(m,m+t)$.
\end{proof}

The above theorem, together with Theorem \ref{regularity},
gives the following upper bound (far from what we have
suggested in \ref{conjrel}) for the degree of a minimal
relation between $t$-minors.

\begin{corollary}\label{generalupper}
The degree of a minimal generator of $J_t(m,n)$ is bounded
above by
\[
m(m+t) - m - \Bl\lfloor \frac{m^2}{t} \Br\rfloor +1 \quad ( \ \leq m^2 + (t-2)m  \ ).
\]
\end{corollary}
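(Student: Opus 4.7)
The plan is to combine the reduction of Theorem \ref{independencefromn} with the explicit regularity formula of Theorem \ref{regularity}. Write $d(t,m,n)$ for the maximum degree of a minimal generator of $J_t(m,n)$. Since $S_t$ is a standard graded polynomial ring, one has $d(t,m,n)\le\reg(J_t(m,n))=\reg(A_t(m,n))+1$, and by Theorem \ref{independencefromn} it is enough to bound $d(t,m,m+t)$. Thus the problem reduces to showing $\reg(A_t(m,m+t))\le m(m+t)-m-\lfloor m^2/t\rfloor$.

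The argument then consists of an elementary case analysis coming from Theorem \ref{regularity} applied at $n=m+t$. Here $mn/t=m+m^2/t$ and $m+n-1=2m+t-1$, so case (i) occurs precisely when $m+t-1<\lfloor m^2/t\rfloor$, and it gives $\reg(A_t(m,m+t))=m(m+t)-m-\lceil m^2/t\rceil$; the claimed bound then follows from $\lceil m^2/t\rceil\ge\lfloor m^2/t\rfloor$. In case (ii), the hypothesis $m+n-1\ge\lfloor mn/t\rfloor$ is equivalent to $t(m+n)>mn$, which is exactly the condition $k_0\ge 1$; therefore $m(m+t+k_0)/t=m+m^2/t+mk_0/t$ with $mk_0/t\ge 0$, yielding $\reg(A_t(m,m+t))=m(m+t)-m-\lfloor m^2/t+mk_0/t\rfloor\le m(m+t)-m-\lfloor m^2/t\rfloor$. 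Either way the principal bound of the corollary holds.

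For the parenthetical estimate $m(m+t)-m-\lfloor m^2/t\rfloor+1\le m^2+(t-2)m$, I note that after excluding the trivial cases of Remark \ref{partcases} one has $t\le m-1$, so $m^2/t\ge m^2/(m-1)=m+m/(m-1)>m$, whence $\lfloor m^2/t\rfloor\ge m+1$. Substituting this lower bound transforms the main bound into $m^2+mt-m-(m+1)+1=m^2+(t-2)m$, as desired. I do not expect any real obstacle: the corollary is essentially a transcription of Theorem \ref{regularity} after the reduction provided by Theorem \ref{independencefromn}, joined with the standard inequality between the maximum degree of a minimal generator and the Castelnuovo--Mumford regularity.
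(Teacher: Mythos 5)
Your proof is correct and takes exactly the paper's route: the corollary is the combination of Theorem \ref{independencefromn} with Theorem \ref{regularity} via the standard bound $d(t,m,n)\le d(t,m,m+t)\le \reg(J_t(m,m+t))=\reg(A_t(m,m+t))+1$. Your case analysis at $n=m+t$ (using $\lceil m^2/t\rceil\ge\lfloor m^2/t\rfloor$ in case (i), $k_0\ge 1$ in case (ii)) and the use of $t\le m-1$ for the parenthetical estimate simply make explicit what the paper leaves implicit.
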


However, Theorem \ref{independencefromn} means that the
validity of Conjecture \ref{conjrel} for $2$-minors of a
$3\times 5$-matrix implies it for $2$-minors of any $3\times n$
matrix etc. In particular, Theorem \ref{independencefromn}
implies $d(2,3,n)\leq d(2,3,5)$ and $d(2,4,n)\leq d(2,4,6)$.
Actually we can show that $d(2,3,5)\leq 3$ and $d(2,4,6)\leq 3$
by computer.

For \textsf{Singular} \cite{singular} the computation of $J_2(3,5)$
is a matter of seconds, but for $J_2(4,6)$ it is already a matter of
days, and we succeeded only because of the following strategy that
uses a priori informations on the Hilbert function of $A_2(m,n)$.
Since the decomposition of the graded pieces of $A_t(m,n)$ can be
computed easily via \eqref{decat}, an evaluation of the hook formula
then yields its $\kk$-dimension. (A tool for this computation had
already been developed for \cite{BC1}.)
\begin{itemize}
\item[(1)] Set $J=J_2(4,6)$, $S=S_2(4,6)$ and, for any $d\in
    \NN$, let $J_{\leq d}\subseteq J$ denote the ideal
    generated by the polynomials in $J$ of degree at most $d$.
    Corollary \ref{generalupper} implies that    $J=J_{\leq 13}$.
\item[(2)] By elimination (for instance see Eisenbud
    \cite[15.10.4]{eisenbud}), \textsf{Singular} computes a set of generators
    of  $J_{\leq 3}$.
\item[(3)] For the degree reverse lexicographical term order, we
    compute a Gr\"obner basis of $J_{\leq 3}$ up to degree
    $13$. So we get $B=\init(J_{\leq 3})_{\leq 13}$.
\item[(4)] The Hilbert function of $S/B$ is
    easily computable, and we have
\[\Hf_{S/J_{\leq 3}}(d)\leq \Hf_{S/B}(d),
\]
where equality holds for $d\leq 13$.
\item[(5)] Since $J_{\leq 3}\subseteq J$, we have
    $\Hf_{S/B}(d)\geq \Hf_{S/J}(d)$. However, comparing $\Hf_{S/B}(d)$
    with the precomputed $\Hf_{S/J}(d)$ shows equality for $d\leq 13$.
    This implies  $J_{\leq 3}=J_{\leq 13}$, and we are done.
\end{itemize}
The verification of $d(2,5,5)=3$ is of similar complexity as
that of $d(2,4,6)=3$. However, already $d(2,5,6)$ or $d(3,4,7)$
seem to be out of reach for present day machines.

\begin{thm}\label{settled cases}
Conjecture \ref{conjrel} is true for $2$-minors of a $4\times
n$-matrix and a $5\times 5$-matrix. In particular, the only
minimal relations between $2$-minors of a $4\times n$-matrix
and a $5\times 5$-matrix, respectively, are quadratics and
cubics.

The conjecture also holds for $3$-minors of a $5\times
5$-matrix.
\end{thm}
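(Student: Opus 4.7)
The plan is to reduce each of the three assertions to a finite computation and then carry out that computation with the tools already developed. For $t=2$ and $m=4$, Theorem \ref{independencefromn} gives $d(2,4,n)\le d(2,4,6)$, and more precisely each minimal irreducible representation of $J_2(4,n)$ is already a minimal representation of $J_2(4,6)$; minimal relations of the smaller algebras $J_2(4,n')$ with $n' < 6$ persist as minimal relations in $J_2(4,6)$ thanks to Proposition \ref{full_retract}. Hence it suffices to verify Conjecture \ref{conjrel} for the single algebra $J_2(4,6)$. For $5\times 5$-matrices and $2$-minors, the algebra $J_2(5,5)$ must be handled on its own. Finally, the graded algebra isomorphism $A_3(5,5)\cong A_2(5,5)$ of Proposition \ref{dual} reduces the case of $3$-minors of a $5\times 5$-matrix to that of $2$-minors of the same matrix.

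To carry out the finite check for $J := J_2(4,6)$, I would follow the five-step procedure described just above the theorem statement. The essential inputs are (i) an a priori degree bound $D = 13$ on minimal generators of $J$ obtained from Theorem \ref{regularity} (yielding $\reg(A_2(4,6))=12$) together with Corollary \ref{generalupper}, and (ii) the explicit Hilbert function of $S/J$, with $S:=S_2(4,6)$, computed from the decomposition \eqref{decat} of $A_t$ combined with the hook-length formula for $\dim_{\kk}L_\lambda V$. Concretely, one computes $I := J_{\le 3}$ in $S$ by elimination in \textsf{Singular}, then computes a partial initial ideal $B := \init(I)_{\le D}$ with respect to the degree reverse lexicographic order, and finally verifies $\Hf_{S/B}(d) = \Hf_{S/J}(d)$ for every $d\le D$. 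Since $J_{\le 3}\subseteq J$ forces $\Hf_{S/B}\ge \Hf_{S/J}$ in each degree $\le D$, the coincidence of Hilbert functions up to degree $D$ yields $J = J_{\le 3}$. The verification for $J_2(5,5)$ proceeds identically, with $D$ again equal to $13$.

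Once $J$ is known to be generated in degree at most $3$, it remains to check that the explicit $G$-representatives produced in Section \ref{QuadCub}, namely the quadrics $\ff_{u,v}$ of Subsection \ref{subdeg2} and the cubics $\gg_u, \gg_u', \hh_u, \hh_u'$ of Subsection \ref{sec_cubic_weight} (restricted to the indices that are defined in our range of $t,m,n$), span the full $G$-submodule $J_{\le 3}$. By Proposition \ref{multJ_t} the bi-shape multiplicities of $(J_t)_2$ and $(J_t)_3$ are already determined by the decomposition of $A_t$, so this amounts to checking that the listed polynomials are non-zero and that their bi-shapes exhaust exactly the asymmetric bi-shapes predicted by Conjecture \ref{conjrel} in these two degrees.

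The main obstacle is the sheer computational cost of computing the initial ideal up to degree $13$ in a polynomial ring with $\binom{4}{2}\binom{6}{2}=90$ (respectively $\binom{5}{2}^2=100$) variables. As the excerpt notes, this computation already takes days of CPU time on current machines, and this is precisely why both reductions, Theorem \ref{independencefromn} to shrink $n$ and Proposition \ref{dual} to convert $t=3$ to $t=2$, are indispensable: without them the next smallest cases $J_2(5,6)$ and $J_3(4,7)$ would already be completely out of reach.
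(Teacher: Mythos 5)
Your reduction steps match the paper's: Theorem \ref{independencefromn} (plus Proposition \ref{full_retract}) to shrink to $J_2(4,6)$, the five-step Hilbert-function/Gr\"obner computation with the regularity bound $13$ from Theorem \ref{regularity} to conclude $J=J_{\le 3}$ for $J_2(4,6)$ and $J_2(5,5)$, and Proposition \ref{dual} to transfer the result to $3$-minors of a $5\times 5$ matrix. Up to that point the argument is the paper's own.

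The gap is in your last step, where you claim that identifying the degree~$\le 3$ generators "amounts to checking that the listed polynomials are non-zero and that their bi-shapes exhaust exactly the asymmetric bi-shapes predicted by Conjecture \ref{conjrel}." This is not what has to be checked, and as stated it is false: by Proposition \ref{multJ_t} \emph{every} asymmetric bi-shape occurring in $(S_2)_3$ occurs in $(J_2)_3$ (for instance $(6\,|\,4,2)$ or $(5,3\,|\,7,1)$), so the bi-shapes of $(J_2)_3$ are far more than the two predicted ones $(\gamma_1|\lambda_1)$ and $(\lambda_1|\gamma_1)$. What must be shown is that all the \emph{other} irreducibles of $(J_2)_3$ lie in $(S_2)_1\cdot (J_2)_2$, i.e.\ are not minimal, and multiplicity bookkeeping cannot decide this, because it says nothing about the image of the multiplication map $(S_2)_1\otimes (J_2)_2\to (J_2)_3$ — precisely the "second obstruction" (the interaction of the algebra structure of $S_t$ with its $G$-structure) that the paper emphasizes. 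The paper's proof therefore does \emph{not} finish at this point with representation-theoretic bookkeeping; it invokes Subsection \ref{sec_min_cub}, where Proposition \ref{extreme} and a Segre-product argument (Proposition \ref{Segre} together with the known structure of $\Sym(\Bw^2V)$, handling the shape $(6\,|\,4,2)$) reduce the question to a $4\times 5$ matrix, and a further machine computation verifies $\dim_{\kk}(J_2)_3=\dim_{\kk}((J_2)_{\le 2})_3+\dim_{\kk}\bigl((L_{\gamma_1}V\tensor L_{\lambda_1}W^*)\oplus(L_{\lambda_1}V\tensor L_{\gamma_1}W^*)\bigr)$. Without this (or an equivalent argument, e.g.\ producing explicit $U$-invariants in $(S_2)_1(J_2)_2$ via Lemma \ref{lemmaformula} for each non-predicted cubic bi-shape), your proof does not establish that the predicted quadrics and cubics actually generate $J_{\le 3}$, only that $J$ is generated in degrees $\le 3$.
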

\begin{proof}
Subsection \ref{subdeg2} implies that the only degree $2$
minimal generators of $J_t(m,n)$ are those listed in
\ref{conjrel}. The discussion above shows that there are no
minimal generators of degree larger than $3$ in $J_2(4,n)$, as
predicted by Conjecture \ref{conjrel}. It remains to show that
the only degree $3$ minimal generators are in the $G$-module
generated by $\gg_1$ and $\gg_1'$. This will follow by a result
of the next subsection, in which we prove this fact without
restriction on $m$.

The statement on $3$-minors of a $5\times5$-matrix follows from
Proposition \ref{dual}.
\end{proof}

\subsection{Cubic minimal relations between
$2$-minors}\label{sec_min_cub}

In this subsection we are going to show that the only cubic
minimal relations between $2$-minors are those predicted in
Conjecture \ref{conjrel}, i.e.~those in the $G$-space generated
by $\gg_1$ and by $\gg_1'$. So we want to show that among the
bi-diagrams $(\gamma|\lambda)$ in $\Sym^3\bigl(\Bw^2V\tensor
\Bw^2W^*\bigr)$only $(\gamma_1|\lambda_1)$ (see
\eqref{kerdiage} is minimal in $J_2(m,n)$. Since $\gamma$ and
$\lambda$ are partitions of $6$, the $U$-invariant of
$(\gamma|\lambda)$ is in $S_2(6,6)$. This means that, for our
task, it suffices to consider a $6\times 6$-matrix. Since this
format is presently unreachable by machine calculation, we must
reduce it further.

\begin{prop}\label{extreme}
Let $t=2$. Then the following hold:
\begin{enumerate}
\item The bi-shapes $(2d | 2d)$, $(2d -1 ,1|2d-1, 1)$ and
    $(2d|2d-2, 2)$ have multiplicity $1$ in $S_2$ (provided
    the vector space dimensions are sufficiently large).
\item the bi-shape $(2d| 2d-1, 1)$ does not appear in
    $S_2$.
\end{enumerate}
\end{prop}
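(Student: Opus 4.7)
By the Cauchy formula \eqref{CauchyH}, $S_2 = \Dirsum_\mu L_\mu E \tensor L_\mu F^*$ with $E = \Bw^2 V$ and $F = \Bw^2 W$, so
\[
\mult_{(\gamma|\lambda)}(S_2) = \sum_{\mu \vdash d} \mult_\gamma(L_\mu E)\cdot \mult_\lambda(L_\mu F^*).
\]
The plan is to pin down, for each partition appearing in the statement, exactly which $\mu$ contribute to the corresponding plethysm $L_\mu(\Bw^2 V)$.

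Three facts about $L_\mu(\Bw^2 V)$ do all the work. First, $(2d)$ occurs in $L_\mu(\Bw^2 V)$ \emph{only} for $\mu = (1^d)$, and then with multiplicity~$1$: the classical plethysm \eqref{plethsym} puts $(2d)$ into $\Sym^d(\Bw^2 V) = L_{(1^d)} E$ with multiplicity~$1$, and iterating Pieri \eqref{Pieri} shows that the total multiplicity of $(2d)$ in $\Tensor^d(\Bw^2 V)$ equals~$1$ (at every step the only legitimate choice is a horizontal $2$-strip lying entirely in row~$1$), so by Schur-Weyl duality no other $\mu$ can contribute. Second, $(2d-1, 1)$ is a hook ($\lambda_2 \le 1$), hence of single $\Bw^2$-type by the remark following Proposition~\ref{single_char}: it occurs in a \emph{unique} $L_{\mu^\ast}(\Bw^2 V)$, and with multiplicity~$1$ there. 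Third, $(2d-1, 1)$ has no even part, so by \eqref{plethsym} it does \emph{not} occur in $L_{(1^d)}(\Bw^2 V)$, while the even partition $(2d-2, 2)$ does occur there with multiplicity~$1$.

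The four assertions now follow by inspection of the Cauchy sum. For $(2d|2d)$ only $\mu = (1^d)$ contributes on each factor, giving multiplicity $1 \cdot 1 = 1$. For $(2d-1, 1 \mid 2d-1, 1)$ the common $\mu^\ast$ furnished by the second fact contributes $1 \cdot 1 = 1$, and is the only such $\mu$. For $(2d \mid 2d-2, 2)$ the first factor forces $\mu = (1^d)$, and the second factor contributes $1$ there, yielding multiplicity $1$. Finally, for $(2d \mid 2d-1, 1)$ the first factor still forces $\mu = (1^d)$, but the third fact says the second factor vanishes in $L_{(1^d)} F^*$, so no $\mu$ contributes and the multiplicity is $0$.

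The delicate point of this plan is the hook claim underlying the second fact. To make it self-contained I would count horizontal $2$-strip paths from $\emptyset$ to $(2d-1, 1)$: there are exactly $d - 1$ of them, parametrized by the step $k \ge 2$ at which the box $(2,1)$ is inserted (step~$1$ being ruled out because $(1, 1)$ and $(2, 1)$ lie in the same column, contradicting the horizontal-strip condition). By Schur-Weyl this count equals $\sum_\mu \mult_{(2d-1, 1)}(L_\mu E)\, f^\mu$; combining with the exclusion of $\mu = (1^d)$ from the third fact, together with the exclusion of $\mu = (d)$ via the companion plethysm for $\Bw^d(\Bw^2 V)$ (whose summands, visible in Tables~\ref{Plt23} and~\ref{Plt24}, do not include $(2d-1, 1)$), pins down the unique contributing $\mu^\ast$ and confirms single $\Bw^2$-type.
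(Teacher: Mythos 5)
Your overall framework is the one the paper uses: expand $\mult_{(\gamma|\lambda)}(S_2)$ through the $H$-Cauchy decomposition \eqref{CauchyH}, feed in the plethysm \eqref{plethsym} and Pieri/Schur--Weyl multiplicity counts in $\Tensor^d E$, and reduce everything to knowing in which $L_\mu E$ the partitions $(2d)$, $(2d-2,2)$ and $(2d-1,1)$ occur. Your Facts 1 and 3 and the resulting bookkeeping for all four bi-shapes are correct (for $(2d|2d)$ the paper instead sandwiches the multiplicity between $\Tensor^d E$ and $A_2$, but your direct argument works equally well).

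The gap is in your self-contained justification of Fact 2, the single $\Bw^2$-type of the hook $(2d-1,1)$, which you rightly single out as the delicate point. Two steps are not established for general $d$: (a) the exclusion of $\mu=(d)$ rests on Tables \ref{Plt23} and \ref{Plt24}, which only cover $d=3,4$; for arbitrary $d$ you would need the exterior companion of \eqref{plethsym} (the decomposition of $\Bw^d(\Bw^2V)$, which the paper only cites from Weyman), and note the claim is even false at $d=2$, where $(3,1)$ does lie in $\Bw^2E$. (b) Even granting both exclusions, the identity $\sum_\mu \mult_{(2d-1,1)}(L_\mu E)\, f^\mu = d-1$, where $f^\mu$ is the multiplicity of $L_\mu E$ in $\Tensor^d E$, does not by itself ``pin down the unique contributing $\mu^\ast$'': you are tacitly using that every remaining $\mu$ satisfies $f^\mu\ge d-1$ (the minimal dimension of a nontrivial irreducible $\Sigma_d$-representation), which needs a separate check at $d=4$ (where $f^{(2,2)}=2$), and at $d=6$, for instance, $(3,3)$ and $(2,2,2)$ also have $f^\mu=d-1$, so the count only yields uniqueness and multiplicity one, not the identity of $\mu^\ast$ --- fortunately that weaker conclusion is all the Proposition needs. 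The paper closes exactly this point by a short positive argument you could substitute: by Pieri for $\GL(E)$, $\Sym^{d-1}E\tensor E=\Sym^dE\dirsum L_{(2,1^{d-2})}E$; the hook is a successor of $(2d-2)\subseteq\Sym^{d-1}E$ and is not even, hence must occur in $L_{(2,1^{d-2})}E$, and since $f^{(2,1^{d-2})}=d-1$ equals the total multiplicity of $(2d-1,1)$ in $\Tensor^d E$, Proposition \ref{single_char} (or your own count) gives single $\Bw^2$-type at once, with no need to exclude $\mu=(d)$ or invoke dimension bounds.
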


\begin{proof}
In the following we use the plethysm \eqref{plethsym}. Let
$E=\Bw^2 V$. Evidently $(2d)$ has multiplicity $1$ in
$\Tensor^d E$, and since $(2d|2d)$ has multiplicity $1$ in
$A_2$, it must have multiplicity $1$ in the intermediate $S_2$.
Since $(2d)$ appears only in $\Sym^dE$ and $(2d-2,2)$ has
multiplicity $1$ in the latter, the multiplicity of $(2d|2d-2,
2)$ in $S_t$ must also be $1$.

We claim that $(2d -1, 1)$ is of single $\Bw^2$-type
$\mu=(2,1,\dots,1)\vdash d$. In fact, $(2d -1, 1)$ has
multiplicity $d-1$ in $\Tensor^d E$ by Pieri's rule, and this
is also the multiplicity of $\mu$ in the
$\GL(E)$-decomposition. Therefore it is enough that $(2d-1, 1)$
appears in $L_\mu E$. Note that
\[
\Sym^{d-1} E\tensor E=
\Sym^{d} E\dirsum L_\mu E,
\]
the non-even successor $(2d-1, 1)$ of $(2(d-1))$ must land in
$L_\mu E$. Proposition \ref{single_char} finishes the argument.
\end{proof}

Proposition \ref{extreme} allows us to reduce the problem to
size $4\times 5$. The symmetric bi-shapes $(6|6)$ and
$(5,1|5,1)$ have multiplicity $1$ in $S_2$, occur in $A_2$ and
so do not belong to $J_2$. The asymmetric shape $(6|5,1)$ is
not represented in $S_2$ at all, and for the reduction to size
$4\times 5$ it remains to rule out the bi-shape $(6|4,2)$ of
multiplicity $1$, since the other bi-shapes involving $(6)$ do
not have symmetric bi-predecessors and $(5,1|5,1)$ has
multiplicity $1$.

We claim that $(S_t)_{(6|4,2)}$ is
contained in the ideal generated by $(S_t)_{(4|2,2)}$. Because of Proposition \ref{Segre}
it is enough to prove this in $\Sym\bigl(\Bw^2V\bigr)\ \sharp\
\Sym\bigl(\Bw^2W^*\bigr)$. But in the Segre product it is
enough to consider the single factors, and the algebra
$\Sym\bigl(\Bw^2V\bigr)$ is well-understood; see Abeasis and
Del Fra \cite{AF}.

For a $4\times 5$-matrix it is not hard to check by machine
computation that
\[
\dim_{\kk}(J_2)_3=\dim_{\kk}(((J_2)_{\le 2})_3)+\dim_{\kk}
\bigl((L_{\gamma_1}V\tensor
L_{\lambda_1}W^*)\oplus (L_{\lambda_1}V\tensor
L_{\gamma_1}W^*)\bigr)
\]
where $\gamma_1$ and $\lambda_1$ are defined in
\eqref{kerdiage}. Thus the only subspace missing from
$(((J_2)_{\le 2})_3$ is indeed the one predicted by Conjecture
\ref{conjrel}.

\subsection{No minimal degree $4$ relations for
$2$-minors}\label{nogeg4t=2}

In this subsection we explain how to verify that there are no
degree $4$ minimal relations between $2$-minors. The same
method has been applied to exclude any further degree $3$
minimal relations for $3$ than those listed in Conjecture
\ref{conjrel}.

The first step is the computation of the $\GL(V)$-decomposition
of $(S_2)_4$ by \textsf{Lie}. (The reader can reconstruct the
decomposition from Table \ref{Plt24} and \eqref{CauchyH}.) As
documented above, it is already known that $J_t$ is generated
in degree $2$ and $3$ if $m=n=5$ or $m=4$. This excludes all
bi-shapes from being minimal relations that fit into matrices
of these sizes. After their exclusion and the exclusion of the
cases covered by Theorem \ref{independencefromn} and
Proposition \ref{extreme}, there remain $6$ critical bi-shapes
of multiplicity $1$ in $J_2$, and $2$ other critical bi-shapes
of multiplicity $2$. (A further reduction would be possible via
Theorem \ref{GandH}(ii).)

We want to show that they are not minimal relations. For
multiplicity $1$ it is enough to find a $U$-invariant of the
given shape in $(S_2)_1\cdot  (J_t)_3$. For example, let
$(\gamma|\lambda)=(6,2|7,1)$. We try to ``derive'' it from
$(\alpha|\beta)=(4,2|6)$. To this end we first compute $g_{\alpha
\leadsto \gamma}$ and $g_{\beta \leadsto \lambda}$ by
\eqref{fromltog}. Then we consider $g_{\alpha \leadsto
\gamma}\tensor g_{\beta \leadsto \lambda}$ as an element of
$\Tensor^4 (E\tensor F^*)$ (by reordering the factors) and pass to
$\Sym^4 (E\tensor F^*)$ by identifying summands that differ only by
a simultaneous permutation of the $E$- and $F^*$-factors. The
result, unless it is $0$, is the desired $U$-invariant, and it
could be found for all critical shapes of multiplicity $1$. (Note
that the computations depend on tableaux, not just diagrams, and not
every choice of tableaux may work.)

If the critical shape has multiplicity $2$, then we must derive
two linearly independent $U$-invariants from asymmetric
bi-shapes in degree $3$. Again, this has turned out successful.
The algorithm has been implemented by the authors in
\textsf{Singular}. It is available with all input and output
files from \cite{YS}.

To justify the claim that $g_{\alpha \leadsto \gamma}\tensor
g_{\beta \leadsto \lambda}$ indeed gives an element in $(S_2)_1
J_t$, note that we take a sum of tensors $(\YY_A(a)\tensor
a')\tensor(\YY_B(b)\tensor b')$ where $A$ and $B$ are tableaus of
shapes $\alpha$ and $\beta$, respectively. Therefore
$\YY_A(a)\tensor\YY_B(b)$ represents an element of $J_2$, and
$a'\tensor b'$ represents an element of $(S_2)_1$.

A similar computation has been carried out for $t=3$ in order to
exclude any further minimal degree $3$ relations. It would certainly
be possible to reach degree $6$ for $t=2$ or degree $4$ for $t=3$.
However, then the algorithm must be re-implemented in a faster
programming language, and its use must be further automatized.

As said in the introduction, we do not expect that relations are
minimal because the algebra structure of $S_t$ is too weak to
exclude them from the ideal generated by the bi-predecessors that
represent relations. If the following conjecture had a positive
answer, then one would be a good deal closer to proving Conjecture
\ref{conjrel}. It reflects the computational experience described
above.

\begin{conjecture}\label{StrongAlg}
Let $(\gamma|\lambda)$ be a bi-shape occurring in $(S_t)_\mu$, and
suppose that there exists a $1$-predecessor $\mu'$ of $\mu$ that
contains a $t$-bi-predecessor $(\alpha|\beta)$ of
$(\gamma|\lambda)$. Then  $(\gamma|\lambda)$ does occur in
$(S_t)_1(S_t)_{(\alpha|\beta)}$.
\end{conjecture}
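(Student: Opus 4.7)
The plan is to work via Cauchy's formula at the level of $H=\GL(E)\times\GL(F)$, identifying $(S_t)_\nu \cong L_\nu E \tensor L_\nu F^*$ and factoring the multiplication map $(S_t)_1 \tensor (S_t)_{\mu'} \to (S_t)_\mu$ as the tensor product of the Pieri projections
\[
\pi_E \colon E \tensor L_{\mu'} E \to L_\mu E, \qquad \pi_{F^*} \colon F^* \tensor L_{\mu'} F^* \to L_\mu F^*.
\]
Each of these is, up to scalar, the unique nonzero $H$-equivariant surjection, existing precisely because $\mu$ is a $1$-successor of $\mu'$ (cf.\ Theorem \ref{GandH}(i) and Remark \ref{idealA_t}(a)). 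Hence the conjecture reduces to proving that the restrictions of $\pi_E$ and $\pi_{F^*}$ to the $G$-equivariant Pieri embeddings
\[
L_\gamma V \hookrightarrow E \tensor L_\alpha V \hookrightarrow E \tensor L_{\mu'} E, \qquad L_\lambda W^* \hookrightarrow F^* \tensor L_\beta W^* \hookrightarrow F^* \tensor L_{\mu'} F^*
\]
are both nonzero, where $L_\alpha V \subset L_{\mu'} E$ and $L_\beta W^* \subset L_{\mu'} F^*$ are the copies furnished by the hypothesis.

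To make the argument explicit I would invoke Lemma \ref{lemmaformula} and Remark \ref{remformula}: form the highest weight vector $g_{\alpha \leadsto \gamma}$ of the distinguished copy of $L_\gamma V$ inside $E \tensor L_\alpha V$, and analogously $g_{\beta \leadsto \lambda} \in F^* \tensor L_\beta W^*$. Tensoring these and applying $\pi_E \tensor \pi_{F^*}$ produces a candidate highest bi-weight vector of $L_\gamma V \tensor L_\lambda W^* \subseteq (S_t)_\mu$, which by construction lies in the image of $(S_t)_1 \tensor (S_t)_{(\alpha|\beta)}$ under multiplication in $S_t$.

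The main obstacle is the non-vanishing of $\pi_E(g_{\alpha \leadsto \gamma})$ in $L_\mu E$ (and symmetrically for $\pi_{F^*}$). Abstractly $L_\gamma V$ occurs both in $E \tensor L_\alpha V$ (by Pieri) and in $L_\mu E$ (by the hypothesis that $(\gamma|\lambda)$ is present in $(S_t)_\mu$), but a priori the composite map could kill the specific Pieri copy; this is where the genuine content of the conjecture lies, and it matches the empirical pattern reported in Subsection \ref{nogeg4t=2}. To attack it I would expand $g_{\alpha \leadsto \gamma}$ in the semistandard-tableau basis of $L_\mu E$ on $\{1,\dots,\dim E\}$ and isolate a maximum-weight monomial whose coefficient is forced to be nonzero by the alternating property of the Young symmetrizer, following the survival argument at the end of the proof of Lemma \ref{lemmaformula}.

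A natural first case is that of single $\Bw^t$-type: there $L_\gamma V$ appears with multiplicity one in $L_\mu E$, so the restricted map $L_\gamma V \to L_\mu E$ is determined up to scalar and only its non-vanishing needs to be checked, which should follow from a Pieri-compatibility computation in the spirit of Proposition \ref{single_char}. The hard part I anticipate is upgrading this to the general multiplicity setting, where one must show that no unwanted cancellation occurs between the various copies of $L_\gamma V$ arising from different $G$-subrepresentations of $L_{\mu'} E$; finding the correct combinatorial framework (likely a branching argument for the embedding $\GL(V) \hookrightarrow \GL(E)$ induced by $V \mapsto \Bw^t V$) for controlling these cancellations appears to be the crux of the problem.
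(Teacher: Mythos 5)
The statement you are addressing is Conjecture \ref{StrongAlg}: the paper offers no proof of it (indeed the authors only say that it ``reflects the computational experience'' of Subsection \ref{nogeg4t=2}), so there is nothing in the paper to compare your argument against step by step. Your reduction is sound as far as it goes: by the Cauchy decomposition \eqref{CauchyH} and Theorem \ref{GandH}(i), the component of the multiplication map landing in $(S_t)_\mu=L_\mu E\tensor L_\mu F^*$ is indeed (up to scalar) the tensor product of the two Pieri projections $\pi_E\colon E\tensor L_{\mu'}E\to L_\mu E$ and $\pi_{F^*}\colon F^*\tensor L_{\mu'}F^*\to L_\mu F^*$, and an equivariant map sends a $\UU_-(V)\times\UU_+(W)$-invariant of the correct bi-weight either to zero or to a highest bi-weight vector of a copy of $L_\gamma V\tensor L_\lambda W^*$ in $(S_t)_\mu$. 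This is essentially the mechanism the authors themselves exploit, via Lemma \ref{lemmaformula} and Remark \ref{remformula}, to verify non-minimality of specific bi-shapes in Subsection \ref{nogeg4t=2}.

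However, this is not a proof. The entire content of the conjecture is concentrated in the step you flag as ``the main obstacle'': showing that $\pi_E(g_{\alpha\leadsto\gamma})\neq 0$ and $\pi_{F^*}(g_{\beta\leadsto\lambda})\neq 0$, i.e.\ that the specific Pieri copy of $L_\gamma V$ inside $E\tensor L_\alpha V\subseteq E\tensor L_{\mu'}E$ is not annihilated by the projection to $L_\mu E$. The survival argument at the end of the proof of Lemma \ref{lemmaformula} does not transfer: there the target is the full tensor power $\bigotimes^{N+t}V$ (or $\bigotimes^d(\Bw^tV)$ in Remark \ref{remformula}), where one can exhibit a basis monomial with nonzero coefficient; here the target is the plethysm component $L_\mu E$, whose $\GL(V)$-structure is precisely the unknown quantity, and coefficients of distinct tensor monomials can and do recombine under $\pi_E$. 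Your own hedging about ``unwanted cancellation'' between the several copies of $L_\gamma V$ when $\mult_{(\gamma|\lambda)}(S_t)\ge 2$ concedes this; even the multiplicity-one (single $\Bw^t$-type) case is not settled by Proposition \ref{single_char}, which only counts multiplicities and says nothing about whether the composite map $L_\gamma V\to E\tensor L_{\mu'}E\to L_\mu E$ is nonzero. In short, you have correctly reformulated the conjecture as a non-vanishing statement for plethysm-level Pieri maps, but you have not proved that statement, and no argument in the paper does either.
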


\subsection{$T$-shape relations}\label{T-shape_rel}
In Theorem \ref{shapeink3} we have identified cubic minimal
relations in $J_t$ that are even $T$-shape relations. In this
subsection we want to show that these cubic relations and the
degree $2$ relations are the only $T$-shape relations in $J_t$.
We recall that an asymmetric bi-shape $(\gamma|\lambda)$ is
called a $T$-shape relation if it has only symmetric
bi-predecessors of multiplicity $1$ in $T_t$. This is a very
strong condition:

\begin{prop}\label{SiTsha} Let $\gamma,\lambda$ be
$(t,d)$-admissible partitions. Then the following are equivalent:
\begin{itemize}
\item[(i)] $(\gamma|\lambda)$ is a $T$-shape relation;
\item[(ii)] $(\gamma|\lambda)$ has a unique bi-predecessor;
\item[(iii)] $\gamma$ and $\lambda$ are both of
    multiplicity $1$ in $\Tensor^d \Bw V$ and,
    respectively, in $\Tensor^d \Bw W^*$ and have the same
    predecessor.
\end{itemize}
\end{prop}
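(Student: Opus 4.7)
The plan is to use the Segre product description $T_t\cong T(\Bw^t V)\,\sharp\,T(\Bw^t W^*)$, from which two structural facts follow: the bi-predecessors of $(\gamma|\lambda)$ form the Cartesian product $P(\gamma)\times P(\lambda)$ of the individual $t$-predecessor sets, and the multiplicity of $(\alpha|\beta)$ in $T_t$ factors as $n(\alpha,\beta)=m(\alpha)\,m(\beta)$, where $m(\cdot)$ denotes multiplicity in $\Tensor^{d-1}\Bw^t V$ or $\Tensor^{d-1}\Bw^t W^*$ as appropriate. Combined with the recursion of Proposition \ref{tensordec} and the elementary observation that every admissible partition has positive multiplicity in the relevant tensor algebra (an easy induction from $m((t))=1$), these reduce everything to combinatorics of predecessor chains.

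For (iii) $\Rightarrow$ (i), multiplicity one of $\gamma$ combined with $m(\gamma)=\sum_{\alpha\in P(\gamma)}m(\alpha)$ and $m(\alpha)\geq 1$ forces $P(\gamma)=\{\alpha\}$ with $m(\alpha)=1$, and similarly for $\lambda$; the common-predecessor hypothesis identifies the two singletons, so the unique bi-predecessor $(\alpha|\alpha)$ is symmetric of multiplicity $m(\alpha)^2=1$ in $T_t$. For (i) $\Rightarrow$ (iii), if every bi-predecessor is symmetric then fixing any $\alpha_0\in P(\gamma)$ and varying $\beta\in P(\lambda)$ forces $\beta=\alpha_0$, hence $P(\lambda)=\{\alpha_0\}$, and symmetrically $P(\gamma)=\{\alpha_0\}$; multiplicity one of $(\alpha_0|\alpha_0)$ gives $m(\alpha_0)=1$, whence $m(\gamma)=m(\lambda)=1$. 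Both implications also yield (ii) on the way, since in each case $P(\gamma)\times P(\lambda)$ reduces to a single point.

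The main obstacle is (ii) $\Rightarrow$ (iii). Uniqueness of the bi-predecessor $(\alpha|\beta)$ directly gives only $P(\gamma)=\{\alpha\}$ and $P(\lambda)=\{\beta\}$; to force $\alpha=\beta$ and multiplicity one for $\gamma$ and $\lambda$, I would read ``unique bi-predecessor'' as including the natural multiplicity-one condition $n(\alpha,\beta)=1$ (equivalently $m(\alpha)m(\beta)=1$), which delivers the multiplicity part of (iii) immediately. For the equality $\alpha=\beta$, I would induct on $d$: the bi-predecessor $(\alpha|\beta)$ itself has a unique bi-predecessor, so by induction it is diagonal, and propagating the diagonal structure up along a single predecessor chain via Pieri's rule forces $\alpha=\beta$ at the top. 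The combinatorial verification of this induction, together with the handling of the asymmetry $\gamma\neq\lambda$ implicit in the $T$-shape relation definition, is where the real work of the proof will sit.
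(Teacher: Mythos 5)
Your argument for the substantive equivalence (i) $\Leftrightarrow$ (iii) is correct and is, in essence, the paper's own proof: the paper only records the pairing observation (any predecessor $\alpha$ of $\gamma$ and any predecessor $\beta$ of $\lambda$ give a bi-predecessor $(\alpha|\beta)$ in $T_t$, though not necessarily in $S_t$) and declares the rest easy; your factorization $n(\alpha,\beta)=m(\alpha)\,m(\beta)$ coming from the Segre description of $T_t$, together with the positivity of multiplicities from Proposition \ref{tensordec}, fills in exactly those routine steps. You are also right that the asymmetry $\gamma\neq\lambda$ has to be carried along implicitly, as the paper does.

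The genuine gap is your direction (ii) $\Rightarrow$ (iii), and it cannot be closed along the route you sketch, because in the reading you adopt the implication is false. Take $\gamma=(3t)$ and $\lambda=(t,t,t)$, $d=3$ (with $m,n$ large enough). Then $\gamma$ has the unique predecessor $(2t)$ and $\lambda$ has the unique predecessor $(t,t)$, so $(\gamma|\lambda)$ has a unique bi-predecessor, namely $((2t)|(t,t))$; moreover $m((2t))=m((t,t))=1$, so this bi-predecessor even has multiplicity $1$ in $T_t$, i.e.\ your strengthened hypothesis $n(\alpha,\beta)=1$ holds. Yet the bi-predecessor is asymmetric, and $\gamma$ and $\lambda$ do not share a predecessor, so (iii) and (i) both fail. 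The same example destroys the proposed induction: $((2t)|(t,t))$ does satisfy (iii) (both shapes have the single predecessor $(t)$, of multiplicity one), but this ``diagonal'' structure does not propagate upwards --- Pieri's rule produces the two distinct multiplicity-one successors $(3t)$ and $(t,t,t)$, and nothing forces $\alpha=\beta$ at the top. So (ii), taken literally or with the multiplicity-one proviso, is strictly weaker than (i) and (iii); a correct reading must also build in the symmetry of the (multiplicity-counted) unique bi-predecessor, which is precisely the form in which the statement is used in the closing Remark of Subsection \ref{T-shape_rel}. The paper's one-line proof never addresses (ii), so your difficulty does point to a real looseness in the statement itself; but the inductive patch you propose would be proving a false assertion and must be dropped.
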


\begin{proof}
Let us just mention the main fact on which the easy proof
relies. If $\gamma$ or $\lambda$ has more than one predecessor,
then $(\gamma|\lambda)$ must have an asymmetric bi-predecessor in
$T_t$, simply because we can pair any predecessors $\alpha$ and
$\beta$ of $\gamma$ and $\lambda$, respectively, to a
bi-predecessor $(\alpha|\beta)$ in $T_t$ (but not necessarily in
$S_t$!). This argument has already been used in the proof of
Proposition \ref{tensordec}.
\end{proof}

In view of Proposition \ref{SiTsha} we must first classify the
shapes of multiplicity $1$ in $\Tensor^d \Bw V$. To this end, we
need the following lemma, whose proof is easy.

%To this end we
%need to introduce the \emph{difference sequence} $\Delta
%\lambda = (\Delta \lambda_1,\dots, \Delta \lambda_k)$ where
%$\Delta \lambda_i=\lambda_i-\lambda_{i+1}$ \
%($\lambda_{k+1}=0$). Notice that in order to get a predecessor
%of $\lambda$ we can remove $N$ boxes from its $i$th row only if
%$\Delta \lambda_i\geq N$. Moreover notice that $\Delta
%\lambda_1+\dots + \Delta \lambda_k = \lambda_1\geq t$.

\begin{lemma}\label{onepredecessor}
Let $\lambda=(\lambda_1,\dots, \lambda_k)$ be a diagram of
$\Tensor^d \Bw^t  V$. Then $\lambda$ has a unique predecessor
if and only if either $\lambda_1=\dots =\lambda_k$ ($\lambda$
is a rectangle) or there exist $i$ such that $\lambda_1=\dots
=\lambda_i>\lambda_{i+1}=\dots =\lambda_k$ and $k=d$ ($\lambda$
is called a \emph{fat hook}).
\end{lemma}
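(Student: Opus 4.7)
The plan is to translate the predecessor condition into a concrete combinatorial parametrization, and then analyse it case by case on the number $s$ of distinct row lengths of $\lambda$. By the definition of $t$-predecessor (equivalently, Pieri's formula for $\Bw^tV=L_{(t)}V$), a $(t,d-1)$-admissible $\alpha$ is a $t$-predecessor of $\lambda$ iff there exist non-negative integers $c_1,\dots,c_k$ with $\sum_i c_i=t$ and $c_i\leq\lambda_i-\lambda_{i+1}$ (using $\lambda_{k+1}=0$) such that $\alpha_i=\lambda_i-c_i$. The bound forces $c_i=0$ at every non-corner row, so the predecessors are indexed by a tuple $(c_{r_1},\dots,c_{r_s})$ at the corner rows $r_1<\cdots<r_s$ (the rows immediately above a drop in length). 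The requirement that $\alpha$ have at most $d-1$ rows is vacuous when $k\leq d-1$, and forces $c_{r_s}=\lambda_{r_s}$ (removal of the whole bottom block) when $k=d$.

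With this parametrization, both directions follow from a short case analysis. For ``if'': in the rectangle case $s=1$ the only choice is $c_{r_1}=t$, which lies in range since $\lambda_{r_1}\geq t$ by admissibility; in the fat-hook case $s=2$ with $k=d$ the forcing $c_{r_2}=\lambda_{r_2}$ determines $c_{r_1}=t-\lambda_{r_2}$ uniquely, and the admissibility identity $n_1\lambda_{r_1}+n_2\lambda_{r_2}=td$ (with $n_j$ the number of rows of length $\lambda_{r_j}$) places this in $[0,\lambda_{r_1}-\lambda_{r_2}]$. For the converse the key combinatorial fact is a one-line swap argument: if $s\geq 2$, $N\in(0,\sum_j u_j)$, and each $u_j\geq 1$, then $\sum_j x_j=N$ with $x_j\in[0,u_j]$ has at least two integer solutions, because from any solution one can always find distinct indices $j\neq j'$ with $x_j\geq 1$ and $x_{j'}<u_{j'}$ and perform a unit swap.

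It then remains to rule out the extreme values $N=0$ and $N=\sum_j u_j=\lambda_{r_1}$ in the remaining cases by admissibility. For $s\geq 2$ with $k<d$ the effective $N$ is $t$, and $t=\lambda_{r_1}$ would give $td=|\lambda|=\sum_j n_j\lambda_{r_j}\leq\lambda_{r_1}\cdot k=tk$, forcing $d\leq k$ and contradicting $k<d$. For $s\geq 3$ with $k=d$, after the forced $c_{r_s}=\lambda_{r_s}$, the residual system has $s-1\geq 2$ free variables summing to $t-\lambda_{r_s}$ inside a box of total size $\lambda_{r_1}-\lambda_{r_s}$; both extremes $t=\lambda_{r_s}$ and $t=\lambda_{r_1}$ make $|\lambda|$ equal to either $\lambda_{r_s}k$ or $\lambda_{r_1}k$, either of which pins all row lengths to a single value and so contradicts $s\geq 3$. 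The main obstacle I foresee is merely bookkeeping: the $k=d$ and $k<d$ subcases must be kept cleanly separated, since the residue-admissibility is active in one and dormant in the other. Once the parametrization of Step~1 is written down, the rest of the proof is elementary.
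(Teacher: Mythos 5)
Your proof is correct. Note that the paper itself offers no argument for this lemma (it is introduced with ``whose proof is easy'' and the proof is omitted), so there is nothing to compare against line by line; your write-up is exactly the sort of elementary argument the authors presumably had in mind. Your parametrization is faithful to the paper's definition of $t$-predecessor (equivalently Pieri's formula): predecessors of $\lambda$ correspond to tuples $(c_i)$ with $c_i\ge 0$, $\sum_i c_i=t$, $c_i\le\lambda_i-\lambda_{i+1}$, with the extra forcing $c_k=\lambda_k$ exactly when $k=d$ (so that the predecessor has at most $d-1$ rows), and your five-way case analysis (rectangle; two distinct lengths with $k=d$; two distinct lengths with $k<d$; at least three distinct lengths with $k=d$ or $k<d$) is exhaustive and each case is handled correctly — in particular the unit-swap lemma you invoke is valid because every ``gap'' $u_j$ at a corner is at least $1$. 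The only steps left implicit are the inequalities $t\le\lambda_1$ (for $k\le d$) and $\lambda_k\le t\le\lambda_1$ when $k=d$ (a weighted-average observation), which follow at once from $|\lambda|=td$ and are indeed just the bookkeeping you flag.
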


\begin{corollary}\label{mult1}
For a diagram $\lambda=(\lambda_1,\dots, \lambda_k)$ of
$\Tensor^d \Bw^t  V$, $d\ge2$, the following are equivalent:
\begin{itemize}
\item[(i)]  $\lambda$ has multiplicity $1$ in $\Tensor \Bw^t
    V$;
\item[(ii)] $\lambda$ has  a single predecessor $\lambda'$, and
    $\lambda'$ has again a single predecessor;
\item[(iii)] $\lambda$ is a rectangle or fat hook of type (a)
    $\lambda_2=\dots = \lambda_d$ or (b) $\lambda_1=\dots =\lambda_{d-1}$.
\end{itemize}
\end{corollary}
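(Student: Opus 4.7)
I would prove the corollary via the cycle (i) $\Rightarrow$ (ii) $\Rightarrow$ (iii) $\Rightarrow$ (i), leaning on the multiplicity recursion of Proposition \ref{tensordec} together with the classification of single-predecessor diagrams in Lemma \ref{onepredecessor}.

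For (i) $\Rightarrow$ (ii), I note that Proposition \ref{tensordec} applied to a single tensor factor gives $\mult_\lambda(\Tensor^d \Bw^t V) = \sum_{\lambda'} \mult_{\lambda'}(\Tensor^{d-1}\Bw^t V)$, the sum running over the $t$-predecessors $\lambda'$ of $\lambda$. So multiplicity $1$ forces $\lambda$ to have a unique predecessor $\lambda'$ with multiplicity $1$ itself; a second application of the same formula (or, when $d = 2$, the triviality $\lambda' = (t)$) yields the second level of uniqueness.

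For (ii) $\Rightarrow$ (iii), Lemma \ref{onepredecessor} forces $\lambda$ to be either a rectangle $(a^k)$ with $k \leq d$ and $ak = td$, or a fat hook $(\lambda_1^i, \lambda_{i+1}^{d-i})$ with $d$ parts, $1 \leq i \leq d-1$, and $\lambda_1 > \lambda_{i+1} \geq 1$. I would then compute the unique predecessor explicitly: $(a^{k-1}, a-t)$ in the rectangle case, and $(\lambda_1^{i-1}, \alpha_i, \lambda_{i+1}^{d-1-i})$ with $\alpha_i = t(d-1) - (i-1)\lambda_1 - (d-1-i)\lambda_{i+1}$ in the fat hook case. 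Requiring (via Lemma \ref{onepredecessor} at level $d-1$) that this predecessor itself be a rectangle or fat hook then leaves exactly the classes of (iii): for rectangles only $k \in \{1, d-1, d\}$ survive, giving $(td)$, $(a^{d-1})$ with $a(d-1) = td$ (valid only when $(d-1)\mid t$), and $(t^d)$; for fat hooks, the equations $\alpha_i = \lambda_1$ or $\alpha_i = \lambda_{i+1}$ both collapse to $\lambda_1 = \lambda_{i+1}$, contradicting the fat hook condition, so $\alpha_i$ lies strictly between $\lambda_1$ and $\lambda_{i+1}$ and $\alpha$ is a rectangle or fat hook only when $i = 1$ (type (a)) or $i = d-1$ (type (b)).

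For (iii) $\Rightarrow$ (i), I plan to check that the unique predecessor of any $\lambda$ in (iii) is itself of the form (iii) at level $d - 1$: the predecessor of $(td)$ is the one-row $(t(d-1))$, that of $(t^d)$ is $(t^{d-1})$, that of $(a^{d-1})$ is the type-(b) fat hook $(a^{d-2}, a-t)$, and the predecessor of a fat hook of type (a) is again of type (a) or else the rectangle $(\lambda_2^{d-1})$ (symmetrically for (b)). An induction on $d$, based at $d = 2$ where every $\leq 2$-part partition of $2t$ has $(t)$ as sole predecessor and hence multiplicity $1$ in $\Tensor^1\Bw^t V$, then delivers $\mult_\lambda(\Tensor^d \Bw^t V) = 1$ via the multiplicity recursion. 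The main obstacle is the combinatorial case analysis in (ii) $\Rightarrow$ (iii); the key simplifying arithmetic is that for a fat hook with $1 < i < d-1$, neither equation that would keep $\alpha$ two-valued can hold, so Lemma \ref{onepredecessor} automatically rules out all such intermediate fat hooks.
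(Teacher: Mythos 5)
Your proposal is correct and follows exactly the route the paper intends: the paper states the corollary without proof, immediately after Lemma \ref{onepredecessor}, and the intended argument is precisely your combination of the multiplicity recursion from Proposition \ref{tensordec} (forcing unique predecessors of multiplicity one) with the rectangle/fat-hook classification, plus the closure of class (iii) under taking the unique predecessor. Your case analysis checks out, including the observation that for a fat hook with $1<i<d-1$ the middle entry $\alpha_i=\lambda_1+\lambda_{i+1}-t$ can equal $\lambda_1$ or $\lambda_{i+1}$ only if the degree condition forces $\lambda_1=\lambda_{i+1}=t$, which contradicts the fat-hook hypothesis.
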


\begin{remark}
Diagrams $\lambda$ of multiplicity $1$ in $\Tensor^d \Bw^t V$
are clearly of single $\Bw^t$-type $\mu$ where $\mu$ itself has
multiplicity $1$, and therefore represents either $\Bw^d(\Bw^tV)$ or
$\Sym^d(\Bw^tV)$. We leave it to the reader to locate the diagrams in
\ref{mult1}(iii).
\end{remark}

The following theorem shows that we have found all $T$-shape
relations. We suppress the case $d=2$ since all asymmetric
shapes of degree $2$ are evidently $T$-shape relations.

\begin{thm}\label{nomoreshaperelations}
The only $T$-shape relations of degree $d\geq 3$ are the cubics
$(\gamma_u|\lambda_u)$ and $(\lambda_u|\gamma_u)$ where $u$
varies in $\{1,\dots,\lfloor t/2\rfloor\}$.
\end{thm}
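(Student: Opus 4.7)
The plan is to use Proposition \ref{SiTsha} to reformulate the problem: a $T$-shape relation in degree $d$ is a pair of distinct $(t,d)$-admissible partitions $\gamma,\lambda$ both of multiplicity $1$ in $\bigotimes^d\bigwedge^tV$ and sharing a unique common predecessor $\alpha$. By Corollary \ref{mult1}, every multiplicity-$1$ partition is a rectangle or a fat hook of type (a) ($\lambda_2=\cdots=\lambda_d$) or type (b) ($\lambda_1=\cdots=\lambda_{d-1}$), and its unique predecessor $\alpha$ must itself be a rectangle or fat hook in degree $d-1$. Applying Lemma \ref{onepredecessor} to $\alpha$, the single-predecessor partitions in degree $d-1$ are the rectangles $(c^l)$ with $lc=(d-1)t$ together with ``generalised'' fat hooks with exactly $d-1$ rows of the form $(q^i,p^{d-1-i})$, $q>p\geq 0$, $1\leq i\leq d-2$.

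For $d\geq 4$, the plan is to enumerate the multiplicity-$1$ successors of each admissible $\alpha$ using the Pieri constraints $\alpha_i\leq\lambda_i\leq\alpha_{i-1}$ and $\lambda_1\leq\alpha_1+t$. The key observation is that in any successor $\lambda$ of $\alpha$ the middle rows $\lambda_2,\dots,\lambda_{d-1}$ are essentially forced by $\alpha$; once one imposes the rectangle/type (a)/type (b) condition on $\lambda$, the first and last rows are pinned down. Concretely, I expect the following: a rectangle $\alpha=(c^l)$ admits the multiplicity-$1$ successor $(dt)$ iff $l=1$, the successor $(t^d)$ iff $l=d-1$, and no multiplicity-$1$ successor otherwise; a genuine type (a) hook $\alpha=(\alpha_1,p^{d-2})$ with $p<t$ admits the single successor $(dt-(d-1)p,\,p^{d-1})$, and symmetrically a genuine type (b) hook $\alpha=(q^{d-2},\alpha_{d-1})$ with $q>t$ admits the single successor $(q^{d-1},\,dt-(d-1)q)$; finally, an intermediate hook $\alpha=(q^i,p^{d-1-i})$ with $2\leq i\leq d-3$ (only possible for $d\geq 5$) admits no multiplicity-$1$ successor, because any such successor would have $\lambda_2=q$ and $\lambda_{d-1}=p$ with $q>p>0$, violating simultaneously the rectangle, type (a), and type (b) conditions on $\lambda$. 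In every case at most one multiplicity-$1$ successor exists, so no distinct pair $(\gamma,\lambda)$ can share $\alpha$, ruling out $T$-shape relations for $d\geq 4$.

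For $d=3$, the predecessor $\alpha=(2t-s,s)$ with $0\leq s\leq t$ is automatically single-predecessor, and the analogous enumeration yields at most two multiplicity-$1$ successors: the type (a) partition $(3t-2s,s,s)$ (always) and the type (b) partition $(2t-s,2t-s,2s-t)$ (exactly when $s\geq t/2$). These are distinct precisely for $s\in\{\lceil t/2\rceil,\dots,t-1\}$; setting $u=t-s$, the pair coincides with $(\lambda_u|\gamma_u)$ from \eqref{kerdiage}, which, together with their mirrors, exhausts the family in the theorem. The main technical obstacle will be the careful bookkeeping of degenerate boundary cases, where a hook collapses to a rectangle, rows vanish, the two hook types simultaneously apply to the same shape, or the constraints $p=0$ or $\alpha_{d-1}=0$ blur the distinction between a genuine hook and a rectangle of shorter height; one has to ensure that no spurious pair is introduced and no pair is overlooked.
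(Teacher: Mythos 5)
Your proposal is correct, and it rests on the same pillars as the paper's proof: Proposition \ref{SiTsha} reduces the problem to pairs of multiplicity-one diagrams with a common (unique) predecessor, Corollary \ref{mult1} and Lemma \ref{onepredecessor} pin down what those diagrams and that predecessor can look like, and a Pieri-style scan of successors finishes the job. The organizational difference is where the scan is anchored. The paper first invokes the trivial-extension reduction (``we may assume $\gamma$ has at most $d-1$ rows''), so it only has to scan the successors of the predecessor of a \emph{non-extended} $\gamma$; this directly produces only the really new case $d=3$, $t$ even, $(\gamma_{t/2}|\lambda_{t/2})$, the remaining $u<\lfloor t/2\rfloor$ being recovered through Proposition \ref{propretract}. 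You instead classify all admissible common predecessors $\alpha$ (rectangles and fat hooks with exactly $d-1$ rows, by Corollary \ref{mult1}(ii) plus Lemma \ref{onepredecessor}) and count their multiplicity-one successors: at most one for every $\alpha$ when $d\ge 4$, and for $d=3$ exactly the pair $\{(3t-2s,s,s),(2t-s,2t-s,2s-t)\}$ when $\lceil t/2\rceil\le s\le t-1$, which with $u=t-s$ is $\{\lambda_u,\gamma_u\}$. I checked your case claims (including the automatic bounds $p<t$ and $q>t$ in the two genuine hook cases, and the vanishing of multiplicity-one successors for intermediate hooks and interior rectangles), and they hold; the payoff of your route is that it avoids the induction on $t$ hidden in the paper's opening reduction and yields all $u\in\{1,\dots,\lfloor t/2\rfloor\}$ in one stroke, at the price of a longer but routine enumeration of predecessors.
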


\begin{proof}
Let $(\gamma|\lambda)$ be a $T$-shape relation. We can assume
that at least one of the two diagrams, say $\gamma$, is not a
trivial extension, in other words has at most $d-1$ rows.

Suppose first that $\gamma_2=\dots=\gamma_d$. Since
$\gamma_d=0$, $\gamma$ is a rectangle with one row of $td$
boxes, and it is evident that we cannot find a second successor
to the predecessor $(t(d-1))$ of $\gamma$ that is different
from $\gamma$ but has itself multiplicity $1$. (The only
exception would be $d=2$ in which case we could pair $\gamma$
with $(2t-u,u)$.)

Now suppose that $\gamma_1=\dots=\gamma_{d-1}$. Since
$\gamma_d=0$ by assumption on $\gamma$, it must be a rectangle
with $d-1\ge 2$ rows. Again we look at the predecessor
$\alpha=(\gamma_1,\dots,\gamma_{d-2},\gamma_{d-1}-t)$. Scanning
the successors of $\alpha$, we see that there is another
successor $\lambda\neq \gamma$ of multiplicity $1$ if and only
if $d=3$, $t$ is even, and $\gamma_{2}=3t/2$. Then
$\lambda=(2t,t/2,t/2)$, as desired.
\end{proof}

\begin{remark}
Let $(\gamma|\lambda)$ a bi-diagram in $T_t$ and let
$(\alpha_1|\beta_1),\dots,(\alpha_N|\beta_N)$ be its
bi-prede\-ce\-ssors counted with multiplicities in $T_t$ (so it
may happen that $(\alpha_i|\beta_i)=(\alpha_j|\beta_j)$ also if
$i\neq j$). Suppose that exactly $k$ of the bi-predecessors of
$(\gamma|\lambda)$, say
$(\alpha_1|\beta_1),\dots,(\alpha_k|\beta_k)$, are in $K_t$: If
one of the copies of $\Wc\tensor \Wl^*$ is in $K_t$ and does
not belong to
\begin{multline*}
\bigl((L_{\alpha_1}V\tensor L_{\beta_1}W^*) \oplus \dots \oplus
(L_{\alpha_k}V\tensor L_{\beta_k}W^*)\bigr)\tensor (T_t)_1\\
\oplus (T_t)_1 \tensor \bigl((L_{\alpha_1}V\tensor
L_{\beta_1}W^*) \oplus \dots \oplus (L_{\alpha_k}V\tensor
L_{\beta_k}W^*)\bigr),
\end{multline*}
then it is actually minimal in $K_t$. In particular, exploiting
\eqref{decat}, a strategy to find minimal generators of $K_t$
could be the following: to track down asymmetric bi-diagrams
$(\gamma|\lambda)$ such that $k<N/2$ or symmetric ones such
that $k<\lfloor N/2\rfloor$. However, one can easily realize
that this situation happens if and only if $(\gamma|\lambda)$
is asymmetric, has multiplicity $1$ in $T_t$ and its unique bi-predecessor
is symmetric. By Theorem \ref{nomoreshaperelations}, such a
bi-diagram has to be among those predicted in Conjecture
\ref{conjrel}.
\end{remark}

\subsection{No other degree $3$ shape relations}\label{nomoredeg3sh}

As usual let $E=\Bw^t V$ and $F=\Bw^t W$. In \ref{secoddrelations},
we could found some minimal cubic relations between $t$-minors
because the asymmetric bi-diagrams $(\rho_u|\sigma_u)$ in
$\Sym^3(E\otimes F^*)$ have no asymmetric bi-predecessors in
$\Sym^2(E\otimes F^*)$. Below we will show that, apart from
$(\gamma_u|\lambda_u)$ and $(\rho_u|\sigma_u)$, no other bi-diagrams
in $\Sym^3(E \otimes F^*)$ have this property. In other words, there
exist no other degree $3$ shape relations than the known ones. For
the proof of this claim we need the following easy remark:

\begin{remark}\label{sym=alt}
Suppose that $\lambda$ is a $(t,3)$-admissible partition with
$k$ predecessors in $\Tensor^2E$, say $a$ of them in
$\Sym^2E$ and the remaining $b=k-a$ in $\Bw^2E$. Then
$a-b\in\{-1,0,1\}$. To check this one has to use Lemma
\ref{decs2at}, noticing that
\[
\tau_{u-1} \mbox{ and }\tau_{u+1} \mbox{ are predecessors of }\lambda
\implies \tau_u \mbox{ is a predecessor of }\lambda.
\]
\end{remark}

Suppose that $(\gamma|\lambda)$ is an asymmetric bi-diagram in
$\Sym^3(E \otimes F^*)$ such that $\gamma$ has $h$ predecessors
and $\lambda$ has $k$ predecessors. We can assume that $1\leq
h\leq k$, because the issue is symmetric.
\begin{itemize}
\item[(i)] Suppose that $h\geq 2$ and $k\geq 3$. Then, by
    Remark \ref{sym=alt}, at least one of
    $\Sym^2E$ and $\Bw^2E$ contains (at least) two
    predecessors of $\lambda$ and one predecessor of
    $\gamma$. So in this case, we can deduce from
    \eqref{CauchyH} that $(\gamma|\lambda)$ has an asymmetric
    bi-predecessor which actually lives in $\Sym^2(E\otimes
    F^*)$.
\item[(ii)] Similar arguments finish the case $h=1$, $k\ge 4$.
\item[(iii)] If $h=k=1$, then we Theorem
    \ref{nomoreshaperelations} implies: either
    $(\gamma|\lambda)=(\gamma_u|\lambda_u)$ for some $u$, or
    $(\gamma|\lambda)$ has an asymmetric bi-predecessor in
    $\Tensor^2(E\otimes F^*)$. Moreover, since
    $(\gamma|\lambda)$ is in $\Sym^3(E\otimes F^*)$,
    such a bi-predecessor actually lives in $\Sym^2(E\otimes
    F^*)$.
\end{itemize}
We still need to deal with the cases $h=1$ and $k=2$, $h=1$ and
$k=3$, $h=2$ and $k=2$. These cases are a bit more tricky:

\begin{prop}
Any asymmetric bi-diagram in $\Sym^3(E \otimes F^*)$, different
from $(\gamma_u|\lambda_u)$, $(\rho_u|\sigma_u)$ and their
mirror images, has an asymmetric bi-predecessor in
$\Sym^2(E\otimes F^*)$.
\end{prop}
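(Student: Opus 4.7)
The plan is to dispatch the three remaining cases $(h,k) \in \{(1,2), (1,3), (2,2)\}$ by direct enumeration, using the explicit combinatorial description of predecessors of $(t,3)$-admissible diagrams together with the parity criterion of Lemma \ref{decs2at}. For any $(t,3)$-admissible $\lambda = (\lambda_1, \lambda_2, \lambda_3)$, Pieri's formula identifies its predecessors with the partitions $\tau_v = (t+v, t-v)$ satisfying $t+v \in [\lambda_2, \lambda_1]$ and $t-v \in [\lambda_3, \lambda_2]$; in particular the set of admissible $v$'s forms a contiguous interval. By Lemma \ref{decs2at}, $\tau_v$ sits in $\Sym^2 E$ or in $\Bw^2 E$ according to the parity of $v$, and by the Cauchy decomposition \eqref{CauchyH}, the bi-predecessors of $(\gamma|\lambda)$ living in $\Sym^2(E \otimes F^*)$ are exactly the pairs $(\tau_v|\tau_{v'})$ with $\tau_v$ a predecessor of $\gamma$, $\tau_{v'}$ a predecessor of $\lambda$ and $v \equiv v' \pmod 2$. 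The task thus reduces to exhibiting such a pair with $v \neq v'$ in every non-exceptional configuration.

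For $h = 1, k = 3$ the three consecutive predecessor-indices of $\lambda$ contain two of the same parity as $\gamma$'s unique predecessor-index $v$, and at least one of these differs from $v$, yielding an asymmetric bi-predecessor immediately. For $h = 1, k = 2$ the two predecessor-indices of $\lambda$ are consecutive, hence of opposite parities, so exactly one matches $v$ in parity; this gives an asymmetric bi-predecessor unless it coincides with $v$ itself. I will then check that this ``stuck'' sub-case never actually produces a bi-diagram in $\Sym^3(E \otimes F^*)$: combining the Lemma \ref{onepredecessor} classification of $\gamma$ (a rectangle or a fat hook) with the very restricted shape of $\lambda$ forced by the coincidence of a predecessor-index with $v$, one verifies that $\gamma$ and $\lambda$ cannot both appear in a common $H$-summand $L_\mu E \otimes L_\mu F^*$ with $\mu\vdash 3$, so the corresponding $(\gamma|\lambda)$ does not occur in $\Sym^3(E\otimes F^*)$ in the first place.

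The hard part is the case $h = k = 2$, where the predecessors of $\gamma$ form a consecutive pair $\{\tau_v, \tau_{v+1}\}$ and those of $\lambda$ form another such pair $\{\tau_w, \tau_{w+1}\}$. Among the four tensor-level bi-predecessors only those with matching parities survive into $\Sym^2(E\otimes F^*)$; if the sets $\{v, v+1\}$ and $\{w, w+1\}$ differ, the survivors include an asymmetric pair and we are done. The sole obstruction is the coincidence $\{v, v+1\} = \{w, w+1\}$, say both equal to $\{u-1, u\}$, in which case the only surviving bi-predecessors are the symmetric $(\tau_u|\tau_u)$ and $(\tau_{u-1}|\tau_{u-1})$. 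To finish I will use the parametrization $\alpha_2 \in [\max(\lambda_3, 2t-\lambda_1), \min(\lambda_2, t)]$ to enumerate all $(t,3)$-admissible shapes whose predecessor set is exactly $\{\tau_u, \tau_{u-1}\}$, and verify that the only such shapes are $\rho_u$ and $\sigma_u$; hence any asymmetric $(\gamma|\lambda)$ arising in this last case must equal $(\rho_u|\sigma_u)$ or its mirror $(\sigma_u|\rho_u)$, completing the proof.
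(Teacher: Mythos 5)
Your translation of ``bi-predecessor in $\Sym^2(E\tensor F^*)$'' into ``pair $(\tau_v|\tau_{v'})$ of predecessors with $v\equiv v'\pmod 2$'' is correct, and your treatment of the case $h=k=2$ is essentially the paper's argument: the $(t,3)$-admissible shapes whose predecessor set is exactly $\{\tau_{u-1},\tau_u\}$ are precisely $\rho_u$ and $\sigma_u$, so the only parity-coincidence configurations are $(\rho_u|\sigma_u)$ and its mirror. The genuine gap is in the case $h=1$, $k=3$: you assert that among the three consecutive predecessor indices of $\lambda$ two have the same parity as the unique predecessor index $v$ of $\gamma$. That is false as a purely combinatorial statement. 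If $v$ has the parity of the \emph{middle} index, only one of the three matches, and if moreover $v$ equals that middle index there is no asymmetric matching-parity pair at all. Concretely, for $t=4$ take $\gamma=(6,3,3)$, whose unique predecessor is $\tau_1=(5,3)$, and $\lambda=(6,4,2)$, whose predecessors are $\tau_0,\tau_1,\tau_2$: the only bi-predecessor occurring in $\Sym^2(E\tensor F^*)$ is the symmetric $(\tau_1|\tau_1)$, so your argument produces nothing for this pair.

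What rescues the statement is that such a configuration never occurs inside $\Sym^3(E\tensor F^*)$, but this requires an argument you do not supply (and it is exactly the extra step in the paper's proof): if $\lambda$ has two predecessors in $\Sym^2F^*$ and one in $\Bw^2F^*$, then Pieri's formula $\Bw^2F^*\tensor F^*\cong \Bw^3F^*\oplus L_{(2,1)}F^*$ together with Proposition \ref{propinthemiddle} shows that $\lambda$ does not occur in $\Bw^3F^*$; hence by \eqref{CauchyH} the multiplicity-one $\gamma$ must lie in $\Sym^3E$, its unique predecessor lies in $\Sym^2E$, and so $v$ automatically has the parity of the two extreme indices of $\lambda$ --- the alignment you took for granted. (Your handling of $h=1$, $k=2$ is sound in spirit: the ``stuck'' sub-case indeed dies because no common $H$-summand exists; in fact the whole case is vacuous, since $\lambda$ with two predecessors occurs only in $L_{(2,1)}F^*$ while $\gamma$, having a single predecessor, cannot occur in $L_{(2,1)}E$.) So your route is the same as the paper's, but as written the $(1,3)$ case rests on a step that fails and must be repaired by the plethysm argument above.
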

\begin{proof}
We keep the previous notation and continue with the remaining
cases.

(i)  $h=1$ and $k=2$. By Proposition \ref{propinthemiddle},
since $L_{(2,1)}F^*$ occurs with multiplicity $2$ in
$\Tensor^3F^*$, the irreducible $L_\lambda W^*$ occurs only
in $L_{(2,1)}F^*$, and neither in $\Sym^3F^*$ nor in
$\Bw^3F^*$. On the other hand, since $h=1$, $L_\gamma V$ has
to be in $\Sym^3E$ or in $\Bw^3E$, but not in
$L_{(2,1)}E$. Therefore $(\gamma|\lambda)$ cannot be in
$\Sym^3(E\otimes F^*)$ by \eqref{CauchyH}.

(ii) $h=1$ and $k=3$. Let us assume that $2$ of the
predecessors of $\lambda$ are in $\Sym^2F^*$ and $1$ in
$\Bw^2 F^*$. The symmetric case is analogous, and there are no
other cases by Remark \ref{sym=alt}. We claim that $L_\lambda
W^*$ is not in $\Bw^3 F^*$. By Pieri's formula, we know that
\[
\Bl(\Bw^2 F^*\Br)\otimes F^*
\cong \Bw^3 F^*\oplus L_{(2,1)}F^*.
\]
Notice that one copy of $L_\lambda W^*$ is in $L_{(2,1)}F^*$ by
Proposition \ref{propinthemiddle}. So, if $L_\lambda W^*$ were
in $\Bw^3 F^*$, then $\lambda$ would have $2$ predecessors in
$\Bw^2 F^*$, a contradiction.

It follows  that $L_\lambda W^*$ does not occur in $\Bw^3 F^*$.
Thus \eqref{CauchyH} implies that the only copy of $\Wc$ has to
be in $\Sym^3E$, and the only predecessor of $\gamma$ is in
$\Sym^2E$. Since $\lambda$ has $2$ predecessors in
$\Sym^2E$, $(\gamma|\lambda)$ has an asymmetric bi-predecessor
which really lives in $\Sym^2(E\otimes F^*)$ by
\eqref{CauchyH}.

If $h=k=2$. We want to show that, in this case, there exist $u$
and $v$ such that $\gamma \in \{\rho_u,\sigma_u\}$ and
$\lambda\in \{\rho_v, \sigma_v\}$. This is an immediate
consequence of the following easy fact: A $(t,3)$-admissible
diagram $\alpha=(\alpha_1,\alpha_2,\alpha_3)$ has $\ell$
predecessors if and only if
$\min\{\alpha_1-\alpha_2,\alpha_2-\alpha_3\}=\ell-1$. At this
point, one can easily check that, apart from the cases in which
$\gamma=\lambda$, $(\gamma|\lambda)=(\rho_u|\sigma_u)$ or
$(\gamma|\lambda)=(\sigma_u|\rho_u)$, the
bi-shape$(\gamma|\lambda)$ has always an asymmetric
bi-predecessor or in $\Sym^2E\otimes \Sym^2F^*$, or in
$\Bw^2E\otimes \Bw^2F^*$, and thus in $\Sym^2(E\otimes F^*)$ by
\eqref{CauchyH}.
\end{proof}

\begin{remark}
Using the plethysms computed by \textsf{Lie} we have checked
that there are no other shape relations than the known degree
$2$ and $3$  ones in the following cases: (i) $t=2,3,$ $d\le 5$ and (ii) $t=4,5,$ $d\le 4$.
\end{remark}

\end{document}